\documentclass[11pt,dvipsnames,reqno,hypertexnames=false]{amsart}
\usepackage[T1]{fontenc}
\usepackage{a4wide}
\usepackage{yhmath,mathrsfs,amsthm,amsmath,amssymb,amsfonts,enumerate,lipsum, appendix,mathtools, bm}
\usepackage{bbold}
\usepackage{pdfpages}
\usepackage[mathscr]{euscript}
\usepackage{thmtools}
\usepackage{graphicx}
\usepackage{pgf,tikz}
\usepackage{tkz-euclide}
\usepackage{caption}
\usepackage{subcaption}
\usetikzlibrary{shapes.geometric}
\usetikzlibrary{arrows,hobby}
\usepackage[shortlabels]{enumitem}
\usepackage[english]{babel}
\allowdisplaybreaks

\usepackage[sort,numbers]{natbib}
\usepackage{thmtools}

\usepackage{hyperref}
\hypersetup{
    colorlinks=true,
    linkcolor=black,
    filecolor=black,
    urlcolor=black,
    citecolor=blue,
}
\usepackage{orcidlink} 
\usepackage{cleveref}  
\usepackage[margin=0.90in, heightrounded]{geometry}
\usepackage{bigints}
\usepackage{url}
\usepackage{esint}
\newtheorem{theorem}{Theorem}[section]
\newtheorem{lemma}[theorem]{Lemma}
\newtheorem{proposition}[theorem]{Proposition}

\newtheorem{definition}[theorem]{Definition}
\theoremstyle{definition}

\newtheorem{remark}[theorem]{Remark}
\numberwithin{equation}{section}
\usepackage[hyperpageref]{backref}
\DeclarePairedDelimiter\abs{\lvert}{\rvert}
\DeclarePairedDelimiter\norm{\lVert}{\rVert}
\makeatletter
\let\oldnorm\norm
\def\norm{\@ifstar{\oldnorm}{\oldnorm*}}

\DeclareMathOperator*{\esssup}{ess\,sup}
\DeclareMathOperator*{\essinf}{ess\,inf}

\makeatletter
\newcommand{\al} {\alpha}

\newcommand{\be} {\beta}

\newcommand{\om} {\omega}
\newcommand{\Om} {\Omega}
\newcommand{\la} {\lambda}

\newcommand{\sig} {\sigma}

\newcommand{\no} {\nonumber}
\newcommand{\noi} {\noindent}
\newcommand{\var} {\varepsilon}
\newcommand{\ra} {\rightarrow}

\DeclareMathAlphabet{\mathpzc}{T1}{pzc}{m}{it}

\def\K{{\mathcal{K}^{s+\log}}}
\def\Kpm{{\mathcal{K}^{s+\log}_{\pm}}}
\def\Kp{{\mathcal{K}^{s+\log}_{+}}}
\def\Km{{\mathcal{K}^{s+\log}_{-}}}

\def\Tail{\text{\rm Tail}}
\def\ps{p_{s}^{*}}
\def\dx{{\,\rm d}x}
\def\dy{{\,\rm d}y}
\def\dz{{\,\rm d}z}
\def\d1{{\,\rm d}z_1}
\def\d2{{\,\rm d}z_2}
\def\dxy{{\,\rm d}x{\,\rm d}y}
\def\dt{\,{\rm d}t}
\def\dtau{\,{\rm d}\tau}

\def\dr{\,{\rm d}r}

\def\ds{\,{\rm d}s}

\def\sb2{{{\mathcal D}^{1,2}_0(B_1^c)}}

\newcommand*\rd{\mathbb{R}^d}

\def\C{{\mathcal C}}

\def\R{{\mathbb R}}
\def\N{{\mathbb N}}

\def\({{\Big(}}
\def\){{\Big)}}
\def\cc{{\mathcal{C}_c^\infty}}

\def\logp{{(- \Delta_p)^{s+\log}}}
\def\l2{{(- \Delta)^{s+\log}}}

\def\c1Loc{{\C_{loc}^1}}

\DeclareMathOperator\supp{supp}

\usepackage{xpatch}
\makeatletter   
\xpatchcmd{\@tocline}
{\hfil\hbox to\@pnumwidth{\@tocpagenum{#7}}\par}
{\ifnum#1<0\hfill\else\dotfill\fi\hbox to\@pnumwidth{\@tocpagenum{#7}}\par}
{}{}

\def\l@subsection{\@tocline{2}{0pt}{2pc}{6pc}{}}
\def\l@subsubsection{\@tocline{3}{0pt}{8pc}{8pc}{}}
\makeatother
\usepackage{fancyhdr}
\title[Fractional logarithmic $p$-Laplacian: Harnack inequality and H\"older regularity]{Regularity for the fractional logarithmic $p$-Laplacian}
\author[]{Nirjan Biswas$^1$, Stuti Das$^2$, and Abhrojyoti Sen$^3$
}
\address{\rm$^1$ Department of Mathematics, Indian Institute of Science Education and Research Pune, Dr. Homi Bhabha Road,
Pune 411008, India\,}
\address{\rm$^2$Department of Mathematics, Indian Institute of Technology Kanpur, Kanpur-208016, India\,}
\address{\rm$^3$Goethe-Universit\"{a}t Frankfurt, Institut f\"{u}r Mathematik, Robert-Mayer-Str. 10, D-60629 Frankfurt, Germany\,}
\email[N. Biswas]{nirjan.biswas@acads.iiserpune.ac.in, nirjaniitm@gmail.com} 
\email[S. Das]{stutid21@iitk.ac.in}
\email[A. Sen]{sen@math.uni-frankfurt.de}
\thanks{$^3$Corresponding author}	
\subjclass[2020]{35R11, 35B45, 35D30, 35B05, 47G20 }
\keywords{Fractional logarithmic Laplacian, nonlocal problems,  Harnack inequality, H\"older regularity, De Giorgi-Nash-Moser theory.}
\begin{document}
\begin{abstract}
We prove the Harnack inequality (with tails) and local H\"older regularity for the fractional logarithmic $p$-Laplace operator, which is derived by differentiating the fractional $p$-Laplace operator with respect to its order. To be more precise, for a suitable function $u,$ the operator reads as the first order derivative
\begin{align*}
    (-\Delta_p)^{s+\log} u:= \frac{{\rm d}}{{\rm d}t}(-\Delta_p)^t u \Big|_{t=s}
\end{align*}
at any arbitrary order $s\in (0, 1).$ The kernel of this operator involves a
logarithmic factor. As a consequence, it changes sign at large scales and, near the diagonal, is more singular than the kernel of the fractional $p$-Laplacian. To achieve our regularity estimates, we adopt the classical De Giorgi-Nash-Moser techniques in this setting. We also construct an example showing that the Harnack inequality fails without tail terms. Our results are new even in the linear setup $p=2$. 
\end{abstract}
\maketitle
\tableofcontents

\section{Introduction and main results}
\subsection{Problem setup and motivations}
Let $\Omega$ be a bounded open set in $\rd$ with $d \ge 1$. The main goal of this work is to establish the local H\"older regularity and Harnack type inequalities for the weak solutions to a nonlocal equation 
\begin{equation}\label{main_PDE}
(-\Delta_p)^{s+\log}u = f  \text{ in } \Omega,  \quad f\in L^{q}_{\mathrm{loc}}(\Omega),\quad q>1.
\end{equation}
For a given differentiable parameter $s \in (0,1)$ and an integrability exponent $p \in (1, \infty)$, the operator $\logp$ is the differentiation of $(- \Delta_p)^s$ with respect to $s$, i.e., 
\begin{align*}
    \logp u(x) := \frac{{\rm d}}{\dt} (- \Delta_p)^t u(x) \bigg|_{t=s},
\end{align*}
where $x \in \Omega$, and $t \in (0,1).$ The fractional $p$-Laplace operator $(- \Delta_p)^t$ is defined as
\begin{align*}
    (- \Delta_p)^tu(x) := C_{d,t,p}  \text{ P.V. } \int_{\rd} \frac{\abs{u(x)-u(y)}^{p-2}(u(x)-u(y))}{\abs{x-y}^{d+tp}} \dy,  
\end{align*}
where $C_{d,t,p}>0$ is the normalization constant and P.V. means ``in the principal value sense''. 
For $u \in \C_c^2(\rd)$ and $x \in \rd$, Bahrouni et. al. in \cite{logp} derived the following integral representation:
\begin{align*}
   \logp u(x) = B_{d,s,p} (- \Delta_p)^s u(x) 
    - pC_{d,s,p} \text{ P.V. } \int_{\rd} \frac{\abs{u(x)-u(y)}^{p-2}(u(x)-u(y))\log(|x-y|)}{\abs{x-y}^{d+sp}} \dy, 
\end{align*}
where 
\begin{align*}
    B_{d,s,p} := \frac{{\rm d}}{\dt} \log(C_{d,t,p}) \Big|_{t=s}= 2 \log 2+\frac 1s+\frac p2\psi\left(\frac{d+sp}{2}\right)+\psi(1-s).
\end{align*}
We emphasize that, to define $\logp u(x)$ pointwisely for $u \in \C_c^2(\rd)$, the range $s<2(p-1)/p$ is required when $p<2$ (see Remark \ref{well-definedness}). 
Further, since $B_{d,s,p} \ra +\infty$ as $s \ra 0$, there exists a threshold $s_0 <\min\left\{2(p-1)/p,1\right\}$ such that
$$
B_{d,s,p} \geq 0, \; \text { for all } s \in\left(0, s_0\right) .
$$

When $p=2$, the idea of differentiating the classical fractional Laplace operator $(- \Delta)^s$ at $s=0$ goes back to Chen-Weth \cite{ChenWeth}. They considered the following operator
\begin{align*}
    L_{\Delta}u(x):= \frac{{\rm d}}{\ds}(-\Delta)^{s}\Big|_{s=0} = \lim_{s \ra 0} \frac{(-\Delta)^s u- u}{s}.
\end{align*}
From the Fourier representation, $\mathcal{F}((-\Delta)^s u) (\xi) = |\xi|^{2s} \hat{u}(\xi)$, where $\xi \in \rd$, one gets $\mathcal{F}(L_{\Delta}u)(\xi) = (2 \log |\xi| ) \hat{u}(\xi)$, which indicates that the logarithmic Laplacian is a weakly singular Fourier integral operator.
The operator $L_{\Delta}$ appears in the study of the asymptotic behavior of $(-\Delta)^su$ as $s \to 0$. Given a function $u \in \C^{\alpha}_c(\rd)$ for some $\alpha>0,$ $L_{\Delta}u(x)$ can be uniquely defined by the asymptotic expansion:
\begin{align*}
    (-\Delta)^s u(x)=u(x)+sL_{\Delta}u(x)+o(s),
\end{align*}
where $o(s)$ denotes the small $o$ that satisfies $\frac{o(s)}{s} \to 0$ as $s\to 0.$ So, $L_{\Delta}u$ appears as the linear correction term in the small-order regime of the fractional Laplacian. The operator admits an explicit (weakly singular) integral representation $L_{\Delta}u(x)$, which, for $u\in \C^{\alpha}_c(\rd)$, $\alpha>0$, and $x \in \rd$, is given by
\begin{align*}
    L_{\Delta}u(x)=C_d \text{P.V.}\int_{B_1(x)}\frac{u(x)-u(y)}{|x-y|^d}\dy-C_d\int_{\rd \setminus B_1(x)}\frac{u(y)}{|x-y|^{d}}\dy+\rho_d u(x).
\end{align*}
Here, $B_r(x)$ is the ball centered at $x$ with radius $r>0$ and 
\begin{align*}
    C_d=\frac{\Gamma(\frac{d}{2})}{\pi^{\frac{d}{2}}}=\frac{2}{\omega_{d-1}}, \quad \rho_d=2 \log 2+\psi\left(\frac{d}{2}\right)-\gamma,
\end{align*}
where, $\omega_{d-1}$ denotes the $(d-1)$-dimensional volume of the unit sphere in $\rd$,  $\gamma=-\Gamma^{\prime}(1)$ is the Euler-Mascheroni constant, and $\psi=\frac{\Gamma^{\prime}}{\Gamma}$ is the Digamma function. For $p \in (1, \infty)$, the quasilinear counterpart of logarithmic Laplacian
\begin{align*}
        L_{\Delta_p}u(x):= \frac{{\rm d}}{\ds}(-\Delta_p)^{s}\Big|_{s=0}, 
\end{align*}
was studied by Dyda-Jarohs-Sk \cite{DydaJarohsSk}. These operators can be viewed as zero order nonlocal operators. Concerning regularity of solutions for zero order nonlocal equations, Jarohs-Kassmann-Weth \cite{JarohsKassmannWeth2026} (also see the work of Kassmann-Mimica \cite{KassmannMimica2014}) proved the local boundedness and continuity for a very large class of kernels, whereas classical solutions are studied by ChangLara-Salda\~na \cite{ChangLaraSaldana2024}, and boundary regularity is studied by Hern\'andez-Santamaría-L\'opez Ríos-Saldaña \cite{HernandezSantamariaLopezRiosSaldana2025}. Also, in \cite{FeulefackJarohs2023} Feulefack-Jarohs showed that if the right hand side of nonlocal equation with small order is smooth, then the solution is smooth.

All of these studies are associated with the endpoint $s=0$. Recently, Chen-Chen-Hauer \cite{ChenChenHauer} took another step of differentiating
$(-\Delta)^{t}$ with respect to order $t$ at an interior point $s\in(0,1)$, rather
than at the endpoint $s=0$. This yields the  fractional logarithmic Laplacian operator
$$(-\Delta)^{s+\log}=\frac{{\rm d}}{\dt}(-\Delta)^{t}\big|_{t=s},$$ the
first order term in
$$(-\Delta)^{t}u=(-\Delta)^{s}u+(t-s)(-\Delta)^{s+\log}u+o(|t-s|).$$
For $p=2$, the fractional logarithmic operator has a clean Fourier representation.
There $(-\Delta)^{s+\log}$ acts as the Fourier multiplier with symbol
$|\xi|^{2s}\log|\xi|^{2}$-the symbol $|\xi|^{2s}$ of $(-\Delta)^{s}$
weighted by the logarithmic factor $\log|\xi|^{2},$ which is what the
notation reflects. As $s\to0$ this symbol reduces to $\log|\xi|^{2}$ which is
the symbol of $L_{\Delta}.$ Thus, the two operators match as $s\to 0.$ Although, the operator $(-\Delta)^{s+\log}$ appeared in studying the properties of $\partial_s u_s,$ where $u_s$ solves the Dirichlet problem of fractional Laplacian, some time ago \cite{JarohsSaldanaWeth2020, JarohsSaldanaWeth2025} (also see \cite{JarohsSenWeth2026} for a recent contribution), the specific study on  $(-\Delta)^{s+\log}$ with $0<s<s_0$ is relatively new and initiated by Chen-Chen-Hauer \cite{ChenChenHauer}. They provided various representation formulas and studied the extension problem for this operator.  Also, using Moser iteration, it is shown that $u \in L^{\infty}(\Omega)$, whenever $u$ weakly solves the Dirichlet eigenvalue problem $(-\Delta)^{s+\log}u=\la u$ in $\Omega$, with $\la \in \R$. 
For its quasilinear variant,  $(-\Delta_p)^{s+\log},$ various inequalities, the Dirichlet eigenvalue problem and the corresponding boundedness result have been studied very recently by Bahrouni-Gouasmia-Hajaiej-Ouannasse \cite{logp}. 

The regularity questions for the logarithmic type nonlocal equations happen to be quite delicate in nature. Formally, for $p=2$ and suitable $u,$ from the semigroup property \cite[Theorem 1.1]{ChenChenHauer}, we see that
\begin{align*}
    (-\Delta)^{s+\log}u=(-\Delta)^s L_{\Delta}u=L_{\Delta}  (-\Delta)^s u=f.
\end{align*}
From the perspective of Schauder theory, if we consider the second identity, i.e., $L_{\Delta}(-\Delta)^s u=f,$ then we know from \cite{JarohsKassmannWeth2026} that if $f\in L^{\infty}(\Omega)$ then $(-\Delta)^s u$ is continuous, which ensures that $u$ is H\"older continuous. Through this direction, no analogous result currently exists for $f$ in the locally scaling subcritical Lebesgue space. On the other hand, if we consider $(-\Delta)^s L_{\Delta}u=f$, with locally scaling subcritical $f$, from \cite{Br2018}, we know that $L_{\Delta}u$ is locally H\"older continuous. Although, it is still unknown whether $L_{\Delta}u$ is locally H\"older continuous implies $u$ is locally H\"older continuous.

Due to nonlinearity, the semigroup property even fails when $p\neq 2$. These observations naturally lead to the following questions:

\begin{center}
\begin{minipage}{14cm}
 \begin{enumerate}
 \item[{\bf Question 1.}] Can one obtain a Harnack type inequality for weak solutions to \eqref{main_PDE}?\vspace{5mm}
\item[{\bf Question 2.}] Can one establish a systematic local regularity theory for weak solutions to \eqref{main_PDE}? 
\end{enumerate}   
\end{minipage}
\end{center}

\subsection{Main results} Our main results answer the above questions affirmatively. To begin with, 
we recall the definition of local weak (super and sub) solutions of \eqref{main_PDE}.
\begin{definition}[Local weak solution]\label{weaksoldef}
A function $u\in W_{\text{loc}}^{s+\log,p}(\Omega) \cap L_{\log,p}(\rd)$ is called a local weak supersolution (subsolution)  to \eqref{main_PDE} if for every compact $K \subset \subset \Omega$ and non-negative $v\in W_0^{s+\log,p}(K)$, it holds
\begin{align}\label{weak1}
 &\iint_{\rd \times \rd} \K(|x-y|) |u(x)-u(y)|^{p-2}(u(x)-u(y)) (v(x) -v(y)) \dxy\no\\
  \quad \quad &\geq \,(\leq )\, \int_{\Omega} f(x)v(x) \dx.
\end{align}
We say $u$ is a local weak solution if the equality holds in \eqref{weak1} for every compact $K \subset \subset \Omega$ and non-negative $v\in W_0^{s+\log,p}(K)$.
\end{definition}
Moreover, we consider the scaling subcritical $f$ defined as
\begin{equation}\label{weight}
f \in L^q_{\text{loc}}(\Omega) \text{ with } \left\{\begin{aligned}
&q>\frac{d}{sp},\,&\text{if}\;d>sp; \\
&q> 1,\,&\text{if}\;d\le sp.
\end{aligned}
\right.
\end{equation}

Now, we state our first main result, the Harnack inequality.
\begin{theorem}[Harnack inequality with tails]{\label{harnack_intro}}
Let $s \in (0,s_0), d>sp$, and $f$ be as given in \eqref{weight}. We consider $B_R(x_0)\subset \Omega$ with $\text{diam}(B_R(x_0))\leq 1$.  Assume that $u$ is a weak solution to \eqref{main_PDE} satisfying $u \geq 0$ in $B_R(x_0)$. Let $0<r\leq \frac{1}{24}$ be such that $B_r(x_0) \subset B_{\frac R{16}}(x_0)$. Then there exists $C=C(d,s,p)>0$ such that the following holds: 
\begin{align*}
    \underset{B_{\frac{r}{2}}(x_0)}{\esssup}\, u \le C \,\underset{B_r(x_0)}{\essinf}\, u &+ C \left(1+\log \frac{1}{R}\right)^{-\frac{1}{p-1}}\left(\left(\frac{r}{R}\right)^{\frac{sp}{p-1}} \left[\operatorname{Tail}_+\left(u_{-} ; x_0, R\right)+\operatorname{Tail}_-\left(u_{+} ; x_0, R\right)\right] \right) \no \\
    & + C \left(1+\log \frac{1}{R}\right)^{-\frac{\sigma}{p \sigma -1}} R^{(sp-\frac{d}{q})\frac{1}{p-1}} \norm{f}_{L^q(B_R(x_0))}^{\frac{1}{p-1}},
\end{align*}
where $\sigma=\frac{(dp-d+sp)q-d}{(dp-d+sp)q-dp}$ and $p\sigma<\ps$.
\end{theorem}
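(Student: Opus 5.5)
The plan is the standard nonlocal De Giorgi--Moser scheme of Di Castro--Kuusi--Palatucci, adapted to the kernel $\K(|x-y|)$. I would split the estimate into a local boundedness (sup) part, a weak Harnack (inf) part, and a final gluing step.

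\textbf{Kernel bounds.} On the relevant scales, $|x-y|\le 1$ since $\operatorname{diam}(B_R(x_0))\le 1$. Using $s<s_0$, so that $B_{d,s,p}\ge 0$, I would first record two-sided bounds of the form
\[
\K(\rho)\simeq \frac{1+\log\frac1\rho}{\rho^{d+sp}},\qquad \rho\le 1 .
\]
For $\rho>1$ the kernel may change sign; this is why the tails $\operatorname{Tail}_\pm$ are split according to the sign of $\K$. The factor $1+\log\frac1\rho$ is monotone in $\rho$, so on $B_R$ it is bounded below by $1+\log\frac1R$. This gives a fractional Sobolev inequality on $B_R$ with constant improved by $(1+\log\frac1R)^{-1}$. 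Tracking this factor through the estimates produces the powers $(1+\log\frac1R)^{-\frac1{p-1}}$ and $(1+\log\frac1R)^{-\frac{\sigma}{p\sigma-1}}$ in the statement.

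\textbf{Caccioppoli inequalities.} Next I would prove Caccioppoli inequalities with tails for $(u-k)_\pm$. I test \eqref{weak1} with $(u-k)_\pm\phi^p$ and split $\rd\times\rd$ into $B_r\times B_r$ and the complement. The near part uses the kernel lower bound. The far part is controlled by $\sup_{x\in\operatorname{supp}\phi}\int_{\rd\setminus B_r}|\K|\,(u-k)_\pm^{p-1}\,dy$, which is dominated by the two tails. I would also prove the logarithmic lemma: test with $(u+d)^{1-p}\phi^p$ for a supersolution $u\ge 0$ in $B_R$. This gives a BMO-type bound on $\log(u+d)$, with $d$ chosen equal to the tail term plus the $f$ term.

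The source term is handled by H\"older and Sobolev. I would use
\[
\int f v\le \|f\|_{L^q}\,\|v\|_{L^{p\sigma}}\,|\{v>0\}|^{\cdots},
\]
with $\sigma$ chosen so that $p\sigma<\ps$. This exponent choice yields exactly the $R^{(sp-\frac dq)\frac1{p-1}}\|f\|_{L^q(B_R)}^{\frac1{p-1}}$ term.

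\textbf{Sup bound.} De Giorgi iteration on shrinking balls, combined with a covering/interpolation argument, gives
\[
\sup_{B_{r/2}}u\le \delta\,\operatorname{Tail}(u_+;x_0,r/2)+C\delta^{-\cdots}\Big(\fint_{B_r}u_+^{\epsilon}\Big)^{1/\epsilon}+\text{($f$ term)}.
\]

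\textbf{Weak Harnack.} From the logarithmic lemma, a Krylov--Safonov or Bombieri--Giusti lemma gives
\[
\Big(\fint_{B_r}u^{\epsilon}\Big)^{1/\epsilon}\le C\inf_{B_r}u+C\Big(\tfrac rR\Big)^{\frac{sp}{p-1}}\operatorname{Tail}(u_-;x_0,R)+\text{($f$ term)}.
\]

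\textbf{Gluing.} I would combine the two estimates, controlling $\operatorname{Tail}(u_+;x_0,r)$ by $\sup_{B_r}u$ plus $(r/R)^{\frac{sp}{p-1}}$ times the tails on $\rd\setminus B_R$. This is the DKP tail estimate; the restrictions $r\le\frac1{24}$ and $B_r\subset B_{R/16}$ keep all distances at most $1$ where the kernel has a sign. Choosing $\delta$ small absorbs the sup term.

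\textbf{Main obstacle.} I expect the hardest step to be the sign-changing kernel in the tails. For $|x-y|>1$ the negative part of $\K$ pairs with $u_+$, while the positive part pairs with $u_-$, giving $\operatorname{Tail}_-(u_+)$ and $\operatorname{Tail}_+(u_-)$. The tail of $u_+$ cannot be absorbed via the sup as in the standard case. So I must show it enters only with the good factor $(r/R)^{\frac{sp}{p-1}}(1+\log\frac1R)^{-\frac1{p-1}}$, by comparing $\K(|x-y|)$ for $x\in B_r$ with $\K(|x_0-y|)$. A second delicate point is uniformity of the logarithmic weight across scales. If the direct comparison fails, I would first try bounding $1+\log\frac1{|x-y|}$ between its values at $R$ and at $r$.
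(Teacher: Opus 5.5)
Your overall architecture is the paper's: Caccioppoli inequality with the split kernel, De Giorgi local boundedness, logarithmic lemma, expansion of positivity plus Krylov--Safonov for the weak Harnack inequality, a tail estimate, and absorption through an iteration over radii. However, your treatment of the negative part of the kernel on the subsolution side is wrong, and this hides the one genuinely new step.

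\emph{The error in your Caccioppoli bound.} When you test a subsolution with $(u-k)_+\phi^p$, you must bound from below the far contribution
\[
2\int_{B_r}\int_{\rd\setminus B_r}\K(|x-y|)\,|u(x)-u(y)|^{p-2}(u(x)-u(y))\,(u(x)-k)_+\phi(x)^p\dy\dx .
\]
On $\{|x-y|>1\}$ one has $\K<0$, and the integrand becomes arbitrarily negative when $u(y)\ll 0\le k\le u(x)$. So this term is controlled by $\Km$ paired with $(u-k)_-(y)^{p-1}$, plus the local term $(\sup_x\int\Km)\int(u-k)_+^p\phi^p$. It is not controlled by $|\K|\,(u-k)_+^{p-1}$ as you state; compare Lemma~\ref{energy estimate}.

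\emph{Consequence for the sup bound.} The local boundedness estimate must therefore contain $\operatorname{Tail}_+(u_+;x_0,\frac r2)+\operatorname{Tail}_-(u_-;x_0,\frac r2)$, as in Proposition~\ref{local-boundedness-I}. The second quantity does not appear in the Harnack inequality. It also cannot be bounded by $\operatorname{Tail}_+(u_-;x_0,R)$ using $u\ge 0$ in $B_R$: on $\{|x_0-y|>1\}$ the weight of $\operatorname{Tail}_-$ carries an extra factor $\log|x_0-y|$ that the weight of $\operatorname{Tail}_+$ lacks. Your gluing step controls only $\operatorname{Tail}(u_+;x_0,r)$, in the DKP way, so it leaves this term uncontrolled.

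\emph{The missing ingredient.} This is the paper's tail estimate, Proposition~\ref{Tail}, which uses that $u$ is a solution and not merely a subsolution. Test with $(u-2M)\phi^p$, where $M=\esssup_{B_r}u$ and $\phi\in\cc(B_{r/2})$. One checks that for $x\in\rd\setminus B_R$ and $y\in B_r$,
\[
|u(x)-u(y)|^{p-2}(u(y)-u(x))(u(y)-2M)\le 2M\,u(x)_+^{p-1}-M\,u(x)_-^{p-1}.
\]
Hence the $-p\Km$ part of the kernel produces essentially $M r^d\int_{\rd\setminus B_R}\Km(|x_0-x|)\,u(x)_-^{p-1}\dx$ with a favourable sign, at the cost of the corresponding integral of $u_+$. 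This gives
\begin{align*}
&\operatorname{Tail}_+(u_+;x_0,r)+\operatorname{Tail}_-(u_-;x_0,r)\le C(1-\log r)^{\frac1{p-1}}\esssup_{B_r}u\\
&\quad+C\Big(\frac rR\Big)^{\frac{sp}{p-1}}\big[\operatorname{Tail}_+(u_-;x_0,R)+\operatorname{Tail}_-(u_+;x_0,R)\big]+Cr^{\frac{qsp-d}{q(p-1)}}\norm{f}_{L^q(B_R)}^{\frac1{p-1}},
\end{align*}
so the log-weighted far tail of $u_-$ is traded for that of $u_+$.

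Note also that the factor $(1-\log r)^{\frac1{p-1}}$ in front of the supremum is exactly compensated by the factor $(1-\log\frac r2)^{-\frac1{p-1}}$ in front of the tails in the local boundedness estimate. This matching is what makes the $\delta$-absorption possible. With the Caccioppoli inequality corrected and this lemma used in your gluing step, followed by the iteration over radii of Lemma~\ref{iteration1}, your plan becomes the paper's proof.
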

\begin{remark}
(i) Our Harnack inequality is structurally different from the Harnack inequality for the fractional $p$-Laplacian. There, global nonnegativity $u \geq 0$ in $\rd$ removes the tail term entirely and reduces the nonlocal Harnack to the local one. In our setting, even when $u \geq 0$ in $\rd$, the term $\operatorname{Tail}_{-}(u; x_0, R)$ still stays on the right hand side. The presence of one part of the tail term reflects the sign changing structure of the kernel; its negative part continues to couple with the exterior (positive) values of $u$ even in the absence of any negative part of $u$.

(ii) From our Harnack inequality, one further recovers a purely local, tail-free Harnack inequality whenever $u$ is compactly supported in $B_R(x_0)$ with $\text{diam}(B_R(x_0))\le 1$. This seems to be a natural condition for a sign changing kernel. We also provide a counterexample showing that this assumption is essentially sharp. In particular, if $u$  is supported outside of $B_R(x_0),$ the tail terms cannot, in general, be removed.

(iii) It should be noted that our constant $C$ depends on $s$, and we do not know the explicit form of dependence. Thus, our estimates are not stable as $s\to 0^{+}$.
We further note that, in contrast with the fractional $p$-Laplacian,  the limit as $s\to 1^{-}$ of $(-\Delta_p)^{s+\log}$ does not converge to a  ``local'' operator. Moreover, the ``local'' Harnack inequality of (ii) is to be understood as the tail-free estimate under a support condition, rather than as the Harnack inequality of any limiting local counterpart.
\end{remark}


Our next result is the local H\"older estimate for the weak solutions to \eqref{main_PDE}.
\begin{theorem}[H\"older regularity]\label{thm: holder estimate}
 Let $s\in(0,s_0),$ $d>sp$ and $f$ be as given in \eqref{weight}. Let $u$ be a weak solution of \eqref{main_PDE}. Then $u$ is locally H\"older continuous. More precisely, there exist $$\alpha=\alpha(d,s,p,q)\in \left(0, \frac{sp-\frac{d}{q}}{p-1}\right),$$ and a constant $C=C(d,s,p,q)>0$ such that, whenever $B_R(x_0)\subset\Omega$ with $\text{diam} (B_R(x_0))\leq 1$, the following estimate holds:
\begin{align*}
\mathop{\mathrm{osc}}_{B_r(x_0)}u & \le C\Big(\frac{r}{R}\Big)^{\alpha}
\Bigg(\left(1+\log \frac{1}{R}\right)^{-\frac{1}{p-1}}\operatorname{Tail}_{\mathrm{mod}}\big(u;x_0,\frac{R}{2}\big)+\Big(\fint_{B_R(x_0)}|u|^{p\sigma}\,dx\Big)^{\frac1{p\sigma}}\no \\
& \quad +\left(1+\log \frac{1}{R}\right)^{-\frac{\sigma}{p\sigma-1}}R^{\left(sp-\frac dq\right)\frac{1}{p-1}}\norm{f}_{L^q(B_R(x_0))}^{\frac1{p-1}}\Bigg),
\end{align*}
for every $r\in(0,\frac R2]$, where $\sigma=\frac{(dp-d+sp)q-d}{(dp-d+sp)q-dp}$ and $p\sigma<\ps$.   
\end{theorem}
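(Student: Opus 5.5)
The plan is to run the De Giorgi--Nash--Moser scheme of Di Castro--Kuusi--Palatucci, adapted to the kernel $\K$. The starting observation is that for $|x-y|\le 1$ the kernel is positive and comparable to $(1+\log\frac1{|x-y|})|x-y|^{-d-sp}$; this positivity uses $s<s_0$, so that $B_{d,s,p}\ge 0$. For $|x-y|>1$ it changes sign and decays like $\log|x-y|\,|x-y|^{-d-sp}$.

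Because $\operatorname{diam} B_R(x_0)\le 1$, all near-diagonal interactions inside the ball fall in the positive regime. The logarithmic weight is bounded below by $1+\log\frac1R$, and this factor produces the normalizations $(1+\log\frac1R)^{-\frac1{p-1}}$ and $(1+\log\frac1R)^{-\frac{\sigma}{p\sigma-1}}$ in the statement.

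The proof proceeds in the following steps.

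1. \textbf{Caccioppoli inequality with tails.} Test the equation with $v=(u-k)_\pm\phi^p$. The local part is handled exactly as in the fractional $p$-Laplacian case, with the $\log$-weighted Gagliardo seminorm on the left. The interaction with $\rd\setminus B_R$ splits according to the sign of $\K$, which yields $\operatorname{Tail}_\pm$ (hence $\operatorname{Tail}_{\mathrm{mod}}$). The $f$-term is absorbed via H\"older and the fractional Sobolev inequality with exponent $p\sigma<\ps$; this is where $\sigma$ enters.

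2. \textbf{Local boundedness.} A De Giorgi iteration on shrinking balls gives
\[
\sup_{B_{R/2}}u\le \delta\,(1+\log\tfrac1R)^{-\frac1{p-1}}\operatorname{Tail}_{\mathrm{mod}}(u;x_0,\tfrac R2)+C_\delta\Big(\fint_{B_R}|u|^{p\sigma}\Big)^{\frac1{p\sigma}}+(\text{$f$-term}).
\]

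3. \textbf{Logarithmic lemma.} Test with $(u+d)^{1-p}\phi^p$ for a supersolution nonnegative in $B_R$. This gives a BMO-type bound on $\log(u+d)$ in $B_r$, where $d$ collects the tail and $f$ contributions. Again only the region $|x-y|\le 1$ matters for the positive local energy; the far part is controlled by tails.

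4. \textbf{Oscillation decay by induction.} Set $\omega_j=(\sigma_0)^{j}\omega_0$ on radii $r_j=R\sigma_0^j$, with $\omega_0$ equal to the right-hand side bracket. Assume $\operatorname{osc}_{B_{r_i}}u\le\omega_i$ for $i\le j$. Take whichever of $u-\inf_{B_{r_j}}u$ or $\sup_{B_{r_j}}u-u$ is larger on half the ball; it is nonnegative in $B_{r_j}$. Apply the logarithmic lemma to obtain a measure-shrinking estimate for the set where it is below $\varepsilon\omega_j$. Then a De Giorgi iteration from step 2 applied to the truncation shows that it is $\ge \omega_j/4$ on $B_{r_{j+1}}$. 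This gives $\omega_{j+1}=(1-\eta)\omega_j$, and $\alpha$ is chosen with $\sigma_0^\alpha=1-\eta$.

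\textbf{Main obstacle: the tail of the shifted function in step 4.} One must bound
\[
\operatorname{Tail}_{\mathrm{mod}}(u-\inf;x_0,r_j)
\]
by splitting $\rd\setminus B_{r_j}$ into annuli $B_{r_{i-1}}\setminus B_{r_i}$, where the inductive oscillation bound applies, plus the complement of $B_R$. The sum $\sum_i \omega_i (r_j/r_i)^{sp}$, weighted by logarithms, converges provided $\alpha<sp/(p-1)$ and $\sigma_0$ is small.

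Two points make this harder than in the fractional $p$-Laplacian case. First, the $\log\frac1{|x-y|}$ weights vary between the scales $r_j$ and $r_i$, so the normalization factor $(1+\log\frac1{r_j})$ must be compared to $(1+\log\frac1{r_i})$. Here I would use $\log\frac1{r_j}/\log\frac1{r_i}\ge1$ together with the extra decay $\log|z|\,|z|^{-sp}\lesssim |z|^{-sp+\epsilon}$, losing only an $\epsilon$ in the exponent. Second, the sign-changing far part for $|x-y|>1$ must be kept in the $\operatorname{Tail}_{\mathrm{mod}}$ term rather than absorbed.

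The restriction $\alpha<(sp-d/q)/(p-1)$ comes from requiring that the $f$-term at scale $r_j$, namely $r_j^{(sp-d/q)/(p-1)}$, also decays at least like $\omega_j$. Finally, for $r\in(r_{j+1},r_j]$, interpolation yields the stated estimate.
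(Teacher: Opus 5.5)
Your plan is essentially the paper's proof. It uses the same Caccioppoli inequality with the kernel split as $C_{d,s,p}B_{d,s,p}r^{-d-sp}+p\mathcal{K}^{s+\log}_{+}-p\mathcal{K}^{s+\log}_{-}$, local boundedness with the $p\sigma$-average, the logarithmic lemma whose $(1-\log r)$ growth is cancelled by the $(-\log 2r)^{-1}$ factor of the logarithmic Poincar\'e inequality, the annular bound $\operatorname{Tail}_{\mathrm{mod}}(u_j;x_0,r_j)^{p-1}\lesssim \sigma_0^{-\alpha(p-1)}\log(1+\frac1{r_j})\,\omega_j^{p-1}$, and $\alpha<(sp-\frac dq)/(p-1)$ forced by the $f$-term. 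The one inaccuracy is in Step 4: the logarithmic estimate must be run with $t=\varepsilon\omega_j$, $\varepsilon=\sigma_0^{a}$, $0<a<\frac{1}{p-1}\min\{sp-\alpha(p-1),\,sp-\frac dq\}$ (so that $t^{1-p}\sigma_0^{sp}\operatorname{Tail}_{\mathrm{mod}}(u_j;x_0,r_j)^{p-1}/\log\frac1{r_j}$ stays bounded), so De Giorgi only gives $u_j\ge\varepsilon\omega_j$ on $B_{r_{j+1}}$, not $\omega_j/4$; hence $\eta=\sigma_0^{a}$, $\omega_j=\sigma_0^{\alpha j}\omega_0$ (not $\sigma_0^{j}\omega_0$), and $\alpha$ must be fixed after $\sigma_0$ (under an a priori cap) so that $(1-\sigma_0^{a})\sigma_0^{-\alpha}\le 1$.
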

\begin{remark}
(i) The restrictive range of the H\"older exponent arises due to the nonhomogeneous term $f$. In particular, if $f$ is locally bounded, then we get H\"older regularity for some $\al \in \left(0, sp/(p-1)\right)$. 

(ii) We consider \eqref{main_PDE} with an extra homogeneous term containing a bounded weight $V$, i.e., 
\begin{align}\label{plusV}
        (-\Delta_p)^{s+\log}u +V(x)|u|^{p-2}u=f, \quad V \in L^{\infty}(\Omega).
\end{align}
Following the proof of Theorem \ref{harnack_intro} and Theorem \ref{thm: holder estimate} for weak solutions to \eqref{plusV}, we observe that the presence of a bounded potential term $V$ does not influence the local behavior of $u$. In fact, we get a similar Harnack estimate and H\"older regularity (with the same range of $\alpha$) for $u$. 
\end{remark}
\begin{remark}
We mention that the statements of the main results include the assumption $d>sp,$ however, Theorems~\ref{harnack_intro} 
and~\ref{thm: holder estimate} remain valid for $d\leq sp$. Indeed, if $d\leq sp$, then $q>\frac{d}{sp}$ holds for every $q>1$, and in Proposition \ref{poincare} the exponent $p^*_s$ will be replaced by any $\ell$ for which the embedding $W^{s+\log, p}(B_r)\hookrightarrow W^{s,p}(B_r)\hookrightarrow L^{\ell}(B_r)$ holds, namely $\ell\in[1,\infty]$ when $d<sp$ and $\ell\in[1,\infty)$ when $d=sp$. These changes will also persist in the local boundedness result.
\end{remark}
\subsection{Related works} 
The regularity theory for nonlocal equations, particularly Harnack inequalities and H\"older regularity, has attracted considerable attention in recent years. The classical De Giorgi-Nash-Moser theory has been extensively and  successfully extended to integro-differential operators with symmetric kernels. We only mention some of them below, and our list is necessarily incomplete.

Beginning with pioneering works \cite{Kas09, DK20} and continuing through \cite{KuusiHarnack}, \cite{Palatucci2016}, and \cite{Coz17}, H\"older continuity estimates and Harnack inequalities for weak solutions of the corresponding elliptic equations have been established via nonlocal adaptations of the Moser and De Giorgi iteration techniques. 

Subsequently, these results have been extended to broader classes of operators, including those with singular jumping measures \cite{CK20} and nonlinear operators exhibiting nonstandard growth conditions \cite{FangZhang2023, CK23, BOS22, CKW22a, CKW22b, BKO22, Ok23}.

We further mention the contributions in \cite{Br2018, Anup2025, BDLM2025, BOGELEIN2025111078, Bogelein2025Gradient, Garain2024, Palatucci2016, giova2025}, where significant developments concerning interior H\"older regularity, higher Sobolev and H\"older regularity, Lipschitz regularity, and $\mathcal{C}^{1,\alpha}$ regularity for weak solutions to $(-\Delta_p)^s u =f$ in $\Omega$, (for a suitable function $f$) have been investigated.

\subsection{Strategy of the proof}
The regularity theory for \eqref{main_PDE} relies on the nonlocal extension of De Giorgi-Nash-Moser
theory by Di Castro-Kuusi-Palatucci \cite{KuusiHarnack,Palatucci2016}. The two main ingredients of this theory are a local boundedness estimate (Proposition \ref{local-boundedness-I}) for subsolutions and a weak Harnack inequality (Proposition \ref{weakharnack-1}) for supersolutions, which is also known as the half Harnack inequality. As a consequence of the local boundedness and a tail control (Proposition \ref{Tail}), we derive the other half of the Harnack inequality. Finally, the combination of these two halves, gives the full Harnack inequality.

The H\"older estimate is built on an argument of De Giorgi iteration, which leads to a reduction of oscillation in smaller balls. To run the De Giorgi iteration, we give a lower bound of the so-called energy by using a logarithmic version of the fractional Sobolev-Poincar\'e inequality (Proposition \ref{poincare}) and an upper bound is obtained by estimating the right-hand side terms of the Caccioppoli inequality (Lemma \ref{energy estimate}). The kernel of the operator 
\[
   \K(r) = C_{d,s,p}\,\frac{B_{d,s,p}-p\log r}{r^{d+sp}}, \quad r>0,
\]
differs from that of the fractional $p$-Laplacian in two ways. It is more singular than
$r^{-d-sp}$ near the diagonal, and it changes sign at large scales:
\begin{center}
$\K(r)\le 0$\,\,\, as soon as\,\,\, $r\ge e^{\frac{B_{d,s,p}}{p}}$.
\end{center}
Both features dictate the long range part of the analysis to be reorganized.

\smallskip
\textbf{$\bullet$ A single tail does not suffice.} In the regularity of nonlocal equations, usually, the long range interaction is recorded by a single nonlocal tail $\operatorname{Tail}\,(u;x_0,R)$ (see, for instance \eqref{full tail}).
Here, this is not the right object. Since $\K$ becomes negative on $\{|x_0-y|\ge e^{B_{d,s,p}/p}\}$, the integrand changes sign and $\operatorname{Tail}(u;x_0,R)$ no longer captures the far field error. The negative contributions cancel against the positive ones, so this quantity neither dominates the error terms produced in the energy estimates nor can be fitted in the iteration argument any more. To record the true effect of the far field, we split the kernel into its fractional parts and its two sign definite logarithmic parts,
\[
\K(r) \sim \,r^{-d-sp} +\Kp(r) -\Km(r), \qquad \Kp,\,\,\Km \ge 0.
\]
 We measure the far field effect by two tails, each built from the positive and negative parts of the logarithmic weight:
\begin{align*}
    \operatorname{Tail}_{\pm} (u; x_0, R)\sim \int_{\rd\setminus B_{R}(x_0)}\frac{1+\left(\mp \log (|x_0-y|)\right)_{+}}{|x_0-y|^{d+sp}}|u(y)|^{p-1} \dy.
\end{align*}
The positive part $\Kp$ is supported where the logarithm is positive, i.e, on the intermediate scales $|x_0-y|<1$, and records the near to intermediate field interactions, whereas the negative part $\Km$ is supported on the far field $\{|x_0-y|>1\}$, where the kernel turns negative. Both tails are finite on the natural tail space $L_{\log,p}(\rd)$ (Remark~\ref{tail remark}), and being sign definite, they fit into the iteration process as required.

\smallskip
\textbf{$\bullet$ How the two tails are controlled.} Motivated by the Caccioppoli inequality, in every estimate, the two tails enter together and the pairing is dictated by the sign of $u$ and the signed part of the kernel. For subsolutions, a closer look at the right hand side of the Caccioppoli inequality \eqref{cacc} says that
the second and third terms should play the role of $\operatorname{Tail}_{+}$ on $u_{+}$ and $\operatorname{Tail}_{-}$ on $u_{-}$ respectively. Employing the De Giorgi iteration on these yields the local boundedness estimate (Proposition~\ref{local-boundedness-I}) which looks like
\[
   \operatorname*{ess\,sup}_{B_{\frac{r}{2}}(x_0)} u
   \;\lesssim\; \delta\bigl(1-\log\frac r2\bigr)^{-\frac1{p-1}}
   \Bigl[\operatorname{Tail}_{+}(u_+;x_0,\frac r2)+\operatorname{Tail}_{-}(u_-;x_0,\frac r2)\Bigr]
   + \cdots .
\]
As anticipated, for nonnegative supersolutions, the same duality reappears in the
logarithmic estimate (Lemma~\ref{log-estimate}) and in the growth lemma (Proposition~\ref{growth-lemma}) with the opposite pairing. However, the bridge between these two tail pairings is the tail estimate lemma in Proposition \ref{Tail} for solutions, where the tail pairings coming from the subsolutions are controlled by the tail pairings coming from the supersolutions, i.e., 
\begin{align*}
 \operatorname{Tail}_{+}(u_{+};x_0,R)+\operatorname{Tail}_{-}(u_{-};x_0,R) \lesssim C(r, R) \left[\operatorname{Tail}_{+}(u_-;x_0,R)+\operatorname{Tail}_{-}(u_+;x_0,R)\right]+ \cdots.
\end{align*}
Thus in the final form of Harnack inequality or H\"older estimate, we only see the pairing
\[
   \operatorname{Tail}_{+}(u_-;x_0,R)+\operatorname{Tail}_{-}(u_+;x_0,R).
\]
The mechanism behind the pairing can be understood in the following way. Far from the domain $\Omega,$ the contribution coming from the negative part of the solution is taken care of by the positive part of the kernel and therefore it produces the term $\operatorname{Tail}_{+}(u_-;x_0,R)$. On the other hand, the far field contribution coming from the positive part of the solution is taken care of by the negative part of the kernel that produces the term $\operatorname{Tail}_{-}(u_+;x_0,R)$. Thus, the natural quantity that controls the influence of the exterior error is precisely the sum of two tails, 
\[
   \operatorname{Tail}_{+}(u_-;x_0,R)+\operatorname{Tail}_{-}(u_+;x_0,R).
\]
The second term is the new feature. For the fractional $p$-Laplacian the kernel is positive everywhere, and hence only $\operatorname{Tail}_{+}(u_-, x_0, R)$ survives. Once the kernel changes sign, we need $\operatorname{Tail}_{-}(u_{+}; x_0, R)$ as compensation.

\smallskip
\textbf{$\bullet$ The $\log$-factor.} Each one of the estimates above carries a sharp factor $\bigl(1+\log\frac1R\bigr)^{-\frac1{p-1}}$ in front of the tails. The reason is not merely technical, but the exact rate at which the logarithmic growth of the kernel is absorbed. The source of this log factor is the fractional logarithmic Poincar\'e inequality (Proposition~\ref{poincare}). The extra $-\log r$ in $\K$ introduces a logarithmic growth into every local energy
that produces a factor $(1-\log r)$ at scale $r$ (see \eqref{int-1}). Proposition~\ref{poincare} supplies the matching decay. Since every estimates in $B_r\times B_r$ satisfies $|x-y|<2r<1$,
one has $-\log|x-y| > (-\log 2r)_+$ and inserting this ratio converts the Gagliardo seminorm into the $\Kp$ energy at the cost of a logarithm in the denominator that
 exactly balances the $(1-\log r)$ growth above. The quantity $g(r)=(1-\log r)/(-\log(2r))$ stays bounded for small radii, so the De Giorgi iterations can be run in a scale invariant fashion. The logarithmic growth also propagates into the estimate of the tails,
\begin{align*}
    [\operatorname{Tail}_{\mathrm{mod}}(u, x_0, r)]^{p-1}\lesssim\, \log \left(1+\frac{1}{r}\right)\omega(r),
\end{align*}
and at small scales, factors like $\bigl(1+\log\frac1R\bigr)^{-\Theta(p, \sigma)}$ for some $\Theta(p, \sigma)>0$ appear in the final results.

\smallskip
\textbf{$\bullet$ Effect of an unbounded potential.} Due to the presence of $f \in L_{\text{loc}}^q(\Omega)$ with $q>d/sp$, the statement of local boundedness (see \eqref{loc.bounded}) introduces a parameter $\sigma$, where the prescribed integrability of $f$ ensures the strict inequalities $\sigma>1$ and $p\sigma<p_s^*$, which are vital for the convergence in the iteration process (Lemma \ref{iteration}).  Indeed, if $f$ lies in the scaling critical space $L_{\text{loc}}^{d/sp}(\Omega)$, then $p\sigma = p_s^*$ causes the iteration lemma to fail, thereby precluding the derivation of local boundedness via this framework. The exponent $\sigma$ is again a key ingredient in establishing the expansion of positivity and H\"{o}lder regularity. The strict inequality $p\sigma <p_s^*$ is also required at these stages. 

\subsection{Outline} The article is organized in the following way. In Section \ref{sec:priliminaries}, we collect the necessary definitions and preliminary results that are used throughout the paper. Section \ref{sec:energy lemma and local bd} is focused on proving the energy estimate and local boundedness result. In Section \ref{sec:log lemma and growth}, we prove a logarithmic estimate and a growth type lemma. Section \ref{sec: harnack} and Section \ref{sec: Holder} are dedicated to completing the proof of Harnack inequality and H\"older regularity estimate, respectively. In Section \ref{sec: counterexample}, we give an example showing that if a solution is supported outside $B_R$, then the tail term in the Harnack inequality cannot be avoided.

\section{Preliminaries} \label{sec:priliminaries}
\noi \textbf{Notation:} We start by gathering the following notation and conventions. 
\begin{enumerate}[(i)]
    \item For $d>sp$, $p^*_s=\frac{dp}{d-sp}$ is the fractional critical exponent.
    \item For any $r>0$, the ball $B_{r}:= B_{r}(x_0)$ and $B_r^c := \rd \setminus B_{r}(x_0)$. 
    \item The positive and negative part of function $f$, are $f_+= \max\{f,0\}$ and $f_-=\max\{-f,0\}$.
    \item For brevity, $C=C(a,b,c,\cdots)$ denotes a generic positive constant that varies from line to line. 
\end{enumerate}

As mentioned before, in the regularity theory for \eqref{main_PDE}, the global behaviour of weak solutions plays a crucial role, which is captured by a special quantity called \textit{nonlocal tail}. Usually, the \textit{nonlocal tail} of a function $u$ is defined as 
\begin{align}\label{full tail}
  \operatorname{Tail}(u;x_0,R) = \left[ R^{sp} \int_{\rd \setminus B_R(x_0)} \K(|x_0-y|) |u(y)|^{p-1} \dy \right]^{\frac{1}{p-1}},
\end{align}
where $x_0\in\rd$ and $R>0$, with the kernel $\K$ given by 
\begin{align*}
   \K(r) := C_{d,s,p}\frac{B_{d,s,p}-p\log(r)}{r^{d+sp}}, \text{ for } r>0. 
 \end{align*} 
Set 
\begin{align*}
    \Kpm(r) := C_{d,s,p} \frac{(- \log (r))_{\pm}}{r^{d+sp}}, \text{ for } r>0. 
\end{align*}
and 
\begin{align*}
 \mathcal{K}^{s+\log}_{\mathrm{mod}}(r)=p\Kp(r) +p\Km(r).   
\end{align*}
Now the following decomposition holds
\begin{align}\label{split-2}
    \K(r) = C_{d,s,p}B_{d,s,p}r^{-d-sp} + p\Kp(r) - p\Km(r). 
\end{align}
As already discussed in the introduction, \eqref{full tail} is not useful in the analysis and also since $\K$ changes sign on $\rd \setminus B_R(x_0)$, the 
integral may be negative, so that the $(p-1)$-th root is not even well-defined. 
So we need to introduce the \textit{decomposed tails} in the following way:
\begin{align*}
   & \operatorname{Tail}_{\mathrm{mod}}(u;x_0,R) :=  \left[R^{sp} \int_{\rd \setminus B_R(x_0)} \left(\frac{1}{|x_0-y|^{d+sp}}+\mathcal{K}^{s+\log}_{\mathrm{mod}}(|x_0-y|)\right) |u(y)|^{p-1}\dy\right]^{\frac{1}{p-1}}, \\
   & \operatorname{Tail}_{+}(u;x_0,R) :=  \left[R^{sp} \int_{\rd \setminus B_R(x_0)} \left(\frac{1}{|x_0-y|^{d+sp}}+\Kp(|x_0-y|) \right)|u(y)|^{p-1}\dy\right]^{\frac{1}{p-1}},\\
   & \operatorname{Tail}_{-}(u;x_0,R) :=  \left[R^{sp} \int_{\rd \setminus B_R(x_0)} \left(\frac{1}{|x_0-y|^{d+sp}}+\Km(|x_0-y|) \right)|u(y)|^{p-1}\dy\right]^{\frac{1}{p-1}}.
\end{align*}
Next, we define the following tail space
\begin{align*}
   & L_{\log,p}(\rd) := \left\{ u \in L_{\text{loc}}^{p-1}(\rd) : \int_{\rd}  \frac{\left(1+\log(1+|y|)\right)}{(1+ \abs{y})^{d+sp}} |u(y)|^{p-1} \dy < \infty \right\}.
\end{align*}
\begin{remark}\label{tail remark}
In this remark, we see the behaviour of \textit{nonlocal tail} of a function which lies in tail spaces.
For $y \in \rd \setminus B_R(x_0)$, using 
\begin{align*}
    \frac{1+|y|}{|x_0-y|} \le 1 + \frac{1+|x_0|}{R}, \text{ and } \frac{|x_0-y|}{1+|y|}\leq 1+|x_0|, 
\end{align*}
it holds
\begin{align*}
    &\frac{(-\log(|x_0-y|))_+}{|x_0-y|^{d+sp}} \le \frac{C(x_0, R)}{(1+|y|)^{d+sp}}, \text{ and } \\
    & \frac{(-\log(|x_0-y|))_{-}}{|x_0-y|^{d+sp}} \le C(x_0, R) \frac{\left(1+\log(1+|y|)\right)}{(1+|y|)^{d+sp}},
\end{align*}
for some $C(x_0,R)>0$. In view of the above inequality, $\mathrm{Tail}_{+}(u;x_0,R)< \infty$ and $\operatorname{Tail}_{-}(u, x_0, R)< \infty$ for every $u \in L_{\log,p}(\rd).$
\end{remark}

\begin{remark}\label{support-positive-kernel}
This remark verifies that $\Km(|x_0-y|)$ is integrable over $\R^d\setminus B_r(x_0)$.
Using $\log (\tau)\leq \frac{1}{e \kappa}\tau^{\kappa}$ for any fixed $0<\kappa< sp,$ we observe that  
    \begin{align*}
    \int_{\R^d\setminus B_r(x_0)}\Km(|x_0-y|) \dy = C_{d,s,p} \int_{1}^{\infty}\frac{\tau^{d-1}\log (\tau)}{\tau ^{d+sp}} \dtau &\leq\frac{C_{d, s,p}}{e\kappa}\int_{1}^{\infty}\tau^{\kappa-1-sp} \dtau\\&=\frac{C_{d, s,p}}{e\kappa}\frac{\tau^{\kappa-sp}}{\kappa-sp}\Bigg|^{\infty}_1=\frac{C_{d,s,p}}{e\kappa(sp-\kappa)}=:\widetilde C \,\text{(say)}.
\end{align*} 
\end{remark}

\begin{remark}\label{well-definedness}
In order to define $ \logp u$ for $u \in \mathcal{C}_c^2(\rd)$, we first require the pointwise definition of $(-\Delta_p)^tu$. When $1<p<2$, one can take $u \in \C_{c}^{1, \alpha}(\rd)$ with $\al \in (0,1]$, and follow \cite[Lemma 2.11]{Iannizzotto2016}, to see
\begin{align*}
      \left|(-\Delta_p)^tu(x)\right| 
    &\leq\frac{\left||u(x)-u(x+z)|^{p-2}(u(x)-u(x+z)) + |u(x)-u(x-z)|^{p-2}(u(x)-u(x-z))\right|}{|z|^{d+tp}} \\
    &\le C |z|^{(\al+1)(p-1)-d-tp}. 
\end{align*}
Now note that for any $R>0$, $|z|^{(\al+1)(p-1)-d-tp} \in L^1(B_R(0))$ whenever $\al>\frac{1-p(1-t)}{p-1}$. To ensure $\al \le 1$, we must require
\begin{align*}
    \frac{1-p(1-t)}{p-1} <1 \Longleftrightarrow t<\frac{2(p-1)}{p}, \text{ where } \frac{2(p-1)}{p} <1 \Longleftrightarrow p<2. 
\end{align*}
So for $p<2$, $(-\Delta_p)^tu$ is pointwisely defined whenever $t<\frac{2(p-1)}{p}$.  
\end{remark}

Now, we consider the following function space, due to \cite{logp}:
\begin{align*}
    W^{s+\log,p}(\rd) := \left\{ u \in L^p(\rd) :[u]_{s,\log,p} < \infty \right\},
\end{align*}
where the seminorm $[\cdot]_{s,\log,p}$ is defined as 
\begin{align*}
    [u]_{s,\log,p} := \left(\iint_{\rd \times \rd} \Kp(|x-y|) |u(x) - u(y)|^p \dxy\right)^{\frac{1}{p}}.
\end{align*}
Let $U$ be a bounded open set in $\rd$. We consider the following closed subspace of $ W^{s+\log,p}(\rd)$:
\begin{align*}
    W_0^{s+\log,p}(U) = \left\{ u \in W^{s+\log,p}(\rd) : u =0 \text{ a.e. in } \rd \setminus U \right\}.
\end{align*}
Note that, the seminorm $[\cdot]_{s,\log,p}$ is an equivalent norm in $W_0^{s+\log,p}(U)$. Moreover, as $U$ is bounded, for $d> sp$, the embedding $W_0^{s+\log,p}(U)\hookrightarrow W_0^{s,p}(U) \hookrightarrow L^q(U)$ holds, for every $q \in [p, p^*_s]$ (for proofs, see \cite{logp}).
Next, we consider the following spaces
\begin{align*}
     W^{s+\log,p}(U) := \left\{ u \in L^p(U) :\iint_{U \times U} \Kp(|x-y|) |u(x) - u(y)|^p \dxy < \infty \right\}
\end{align*}
\text{ and }
\begin{align*}
    & W_{\text{loc}}^{s+\log,p}(U) := \left\{ u \in L^p(K) :\iint_{K \times K} \Kp(|x-y|) |u(x) - u(y)|^p \dxy < \infty \right\},
\end{align*}
for every compact set $K \subset U$. As
$$\iint_{U \times U} \frac{|u(x) - u(y)|^p }{|x-y|^{d+sp}}\dxy \leq \frac{1}{(-\log(\operatorname{diam}U))_+}\iint_{U \times U} \Kp(|x-y|) |u(x) - u(y)|^p \dxy, $$ the embedding $W^{s+\log,p}(U)\hookrightarrow W^{s,p}(U)$ holds.

\begin{remark}\label{C_c} We observe that if $u\in W_{\text{loc}}^{s+\log,p}(U)$ and $\phi\in \mathcal{C}^\infty_c(U)$, then $u\phi\in W_{0}^{s+\log,p}(U)$.
 It is clear that $u\phi\in L^p(U)$. Now take $U_2=\operatorname{supp}(\phi)$  and $U_1$ such that $U_2\subset \subset U_1 \subset\subset\Om$. We split 
\begin{align}\label{test-re-1}
    &\iint_{\rd \times \rd} \Kp(|x-y|) |u(x)\phi(x) - u(y)\phi(y)|^p \dxy\no\\=&\iint_{U_1 \times U_1} \Kp(|x-y|) |u(x) \phi(x)- u(y)\phi(y)|^p \dxy\no\\&+2\iint_{U_1 \times \rd \setminus U_1} \Kp(|x-y|) |u(x)\phi(x) - u(y)\phi(y)|^p \dxy.
\end{align}
Using $\abs{\phi(x)- \phi(y)} \le C \abs{x-y}$ for every $x,y \in U_1$, we see that 
\begin{align*}
    & \int_{U_1}\int_{ U_1} \Kp(|x-y|) |u(y)|^p |\phi(x)-\phi(y)|^p \dxy \\
    & \le C(d,s,p) \int_{ U_1} \left( \int_{U_1} \frac{(- \log (|x-y|))_{+}}{|x-y|^{d+sp-p}} \dx \right) |u(y)|^p \dy < \infty, 
\end{align*}
where the finiteness comes using $u \in L^p(U_1)$, and 
 \begin{align*}
     \int_0^1 \tau^{d-1} \frac{(- \log (\tau))_{+}}{\tau^{d+sp-p}} \dtau \le \int_0^1 \frac{|\log(\tau)|}{\tau^{sp-p+1}} \dtau \lesssim\int_0^1 \tau^{-\kappa-sp+p-1}\, \dtau =\frac{1}{p-(\kappa +sp)},
 \end{align*}
where $\kappa < p-sp$.
Moreover, since $u \in W_{\text{loc}}^{s+\log,p}(U)$,
\begin{align*}
    \int_{U_1}\int_{ U_1} \Kp(|x-y|) |u(x) - u(y)|^p \dxy < \infty.
\end{align*}
Hence, the first term on the RHS of \eqref{test-re-1} is finite. 
For the second term of \eqref{test-re-1}, we note that for every $x\in U_2$ and $y\in \rd \setminus U_1$, $|x-y|\geq \text{dist}(U_2, \rd \setminus U_1)=M>0$, which implies $(-\log|x-y|)_+ \le M_1$ for some $M_1\ge0$, and hence  
\begin{align*}
    &\int_{U_1}\int_{ \rd\setminus U_1} \Kp(|x-y|) |u(x)\phi(x) - u(y)\phi(y)|^p \dxy\\=&\int_{U_2}\int_{ \rd\setminus U_1} \Kp(|x-y|) |u(x)|^p |\phi(x)|^p \dxy \leq M_1 \int_{U_2} |u(x)|^p |\phi(x)|^p \left(\int_{ \rd\setminus U_1}\frac{\dy}{|x-y|^{d+sp}}\right)\dx.
\end{align*}
For every $x \in U_2$ and $y \in \rd \setminus U_1$,  
\begin{align*}
    \frac{1+|y|}{|x-y|} \le 1 + \frac{1+|x|}{M}, \text{ where } \int_{\rd} (1+ |y|)^{-d-sp} \dy < \infty.
\end{align*}
Therefore, 
\begin{align*}
    \int_{U_2} |u(x)|^p|\phi(x)|^p\left(\int_{ \rd\setminus U_1}\frac{\dy}{|x-y|^{d+sp}}\right)\dx \le C\norm{\phi}^p_{L^{\infty}(U_1)} \int_{U_2} |u(x)|^p \left( 1 + \frac{1+|x|}{M}\right)^{d+sp} \dx < \infty. 
\end{align*}
\end{remark}
Next, we observe a fractional logarithmic Poincar\'{e} inequality via scaling. 
\begin{proposition}\label{poincare}
Let $d>sp$, $u \in W^{s+\log,p}(B_r)$ and  $(u)_{B_r} = |B_r|^{-1} \int_{B_r} u(x) \dx$ with $r<1/2$. Then we have
\begin{align*}
  \fint_{B_r}|u-(u)_{B_r}|^{p}\dx &\leq   \left(\fint_{B_r}|u-(u)_{B_r}|^{p^*_s}\dx \right)^{\frac{p}{p^*_s}}\\&\leq C(d,s,p)\frac{r^{sp-d}}{(-\log(2r))_{+}}\iint_{B_r \times B_r}\mathcal{K}^{s+\log,}_+(|x-y|)|u(x)-u(y)|^p\dxy.
\end{align*}
\end{proposition}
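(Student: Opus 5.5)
The plan is to combine Hölder's inequality, the classical fractional Sobolev–Poincaré inequality on the unit ball transported to $B_r$ by scaling, and a pointwise lower bound on the logarithmic weight valid for pairs of points in $B_r$.

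The first inequality in Proposition \ref{poincare} is Hölder's inequality on the probability measure $|B_r|^{-1}\dx$, using $p\le p^*_s$.

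For the second inequality I would first work on $B_1$. There the standard fractional Sobolev–Poincaré inequality for $W^{s,p}(B_1)$, valid since $d>sp$, gives
\begin{align*}
\Big(\fint_{B_1}|v-(v)_{B_1}|^{p^*_s}\dx\Big)^{\frac{p}{p^*_s}}\le C(d,s,p)\iint_{B_1\times B_1}\frac{|v(x)-v(y)|^p}{|x-y|^{d+sp}}\dxy .
\end{align*}
To get this, I would use the extension of $v-(v)_{B_1}$ to $W^{s,p}(\rd)$ (the ball is an extension domain), then the fractional Sobolev embedding, and finally the fractional Poincaré inequality to control the $L^p$ part. For $u$ on $B_r$, I would set $v(z)=u(x_0+rz)$. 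The mean-oscillation average is scale invariant, while the Gagliardo seminorm transforms by the factor $r^{sp-d}$. This yields
\begin{align*}
\Big(\fint_{B_r}|u-(u)_{B_r}|^{p^*_s}\dx\Big)^{\frac{p}{p^*_s}}\le C(d,s,p)\,r^{sp-d}\iint_{B_r\times B_r}\frac{|u(x)-u(y)|^p}{|x-y|^{d+sp}}\dxy .
\end{align*}

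The last step converts the Gagliardo energy into the $\Kp$-energy. For $x,y\in B_r$ one has $|x-y|<2r<1$, since $r<1/2$, and therefore $-\log|x-y|>-\log(2r)=(-\log(2r))_+>0$. It follows that
\begin{align*}
\frac{1}{|x-y|^{d+sp}}\le \frac{1}{C_{d,s,p}(-\log(2r))_+}\,\Kp(|x-y|).
\end{align*}
Inserting this bound into the scaled inequality gives the claim, with the normalization constant $C_{d,s,p}$ absorbed into $C(d,s,p)$.

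The main point needing care is that the constant in the unit-ball inequality depends only on $(d,s,p)$ and is independent of $r$. This is exactly why the scaling is done from the fixed ball $B_1$. The only other thing to check is that $u\in W^{s+\log,p}(B_r)$ implies $u\in W^{s,p}(B_r)$, which is the embedding noted just before Remark \ref{C_c}.
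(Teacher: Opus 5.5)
Your proposal is correct and is essentially the paper's proof. The paper also scales the unit-ball fractional Sobolev--Poincar\'e inequality to $B_r$, picking up the factor $r^{sp-d}$; it quotes that inequality from \cite[Theorem 4.10]{SV13} instead of deriving it via extension, embedding and Poincar\'e as you do. It then uses $|x-y|<2r<1$ to get $(-\log|x-y|)_+\ge(-\log(2r))_+$ and pass to the $\mathcal{K}^{s+\log}_+$-energy, and your H\"older justification of the first inequality and your tracking of the constant $C_{d,s,p}$ are details the paper leaves implicit.
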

\begin{proof}
Without loss of generality, we consider the center of the ball at the origin. We recall the following fractional Poincar\'{e} inequality \cite[Theorem 4.10]{SV13} over the ball $B_1(0):$
 \begin{align*}
     \left(\int_{B_1}|u-(u)_{B_1}|^{p^*_s}\dx\right)^{\frac{p}{p^*_s}} \leq C(d,s,p) \left(\iint_{B_1 \times B_1}\frac{|u(x)-u(y)|^{p}}{|x-y|^{d+sp}}\dxy\right).
 \end{align*}
 First we note that $(v)_{B_1(0)}=(u)_{B_r(0)}$ where $v(x):=u(rx).$ Indeed, let $x\in B_1(0),$ then $z=rx\in B_r(0).$ We can easily see that
 \begin{align*}
     (v)_{B_1}=\frac{1}{|B_1|}\int_{B_1}v(x)\dx=\frac{1}{|B_1|}\int_{B_1}u(rx)\dx=\frac{1}{\omega_{d-1}}\int_{B_r}u(z)\frac{\dz}{r^d}=\frac{1}{|B_r|}\int_{B_r}u(z)\, dz=(u)_{B_r}.
 \end{align*}
 Using this scaling, the left hand side of the above inequality becomes
 \begin{align*}
     \left(\int_{B_1}|v(x)-(v)_{B_1}|^{p^*_s}\dx\right)^{\frac{p}{p^*_s}}=\left(\omega_{d-1}\fint_{B_r}|u(z)-(u)_{B_r}|^{p^*_s}\dz\right)^{\frac{p}{p^*_s}}.
 \end{align*}
 Again, using the scaling, we compute the seminorm. Let $z_1=rx$ and $z_2=ry$. Then
 \begin{align*}
     \iint_{B_1 \times B_1}\frac{|v(x)-v(y)|^p}{|x-y|^{d+sp}}\dxy&=\iint_{B_r \times B_r}\frac{|u(z_1)-u(z_2)|^p}{r^{-d-sp}|z_1-z_2|^{d+sp}}\frac{\dz_1\dz_2}{r^{2d}}\\
     &=r^{sp-d}\iint_{B_r \times B_r}\frac{|u(z_1)-u(z_2)|^p}{|z_1-z_2|^{d+sp}}\dz_1\dz_2.
 \end{align*}
 Hence, we get the following inequality over $B_r(0)$:
 \begin{align*}
  \left(\fint_{B_r}|u-(u)_{B_r}|^{p^*_s}\dx\right)^{\frac{p}{p^*_s}}  \leq C(d,s,p) r^{sp-d}\iint_{B_r \times B_r}\frac{|u(x)-u(y)|^p}{|x-y|^{d+sp}}\dxy.
 \end{align*}
Since $|x-y|< 2r,$ we further have  $(-\log(|x-y|))_+ \geq (-\log(2r))_{+}$. 
 Therefore, 
 \begin{align*}
  \left(\fint_{B_r}|u-(u)_{B_r}|^{p^*_s}\dz\right)^{\frac{p}{p^*_s}} &\leq C(d,s,p) r^{sp-d}\iint_{B_r \times B_r}\frac{|u(x)-u(y)|^p}{|x-y|^{d+sp}}\dxy\\
  &\leq C(d,s,p)\frac{r^{sp-d}}{(-\log(2r))_{+}}\iint_{B_r \times B_r}\frac{(-\log|x-y|)_+|u(x)-u(y)|^p}{|x-y|^{d+sp}} \dxy,
 \end{align*}
 as required. 
\end{proof}

\section{Caccioppoli inequality and local boundedness}{\label{sec:energy lemma and local bd}}
This section contains the Caccioppoli inequality and the local boundedness of weak subsolutions to \eqref{main_PDE}. We begin with the following Caccioppoli estimate. In what follows, for $k>0$, we set $w_{\pm}(x) = (u(x) - k)_{\pm}$.

\begin{lemma}[Caccioppoli inequality]\label{energy estimate}
 Let $s \in (0,s_0)$ and $f \in L_{\text{loc}}^1(\Omega)$. Let $u$ be a weak subsolution to \eqref{main_PDE}, and let $0<r \le \frac{1}{2}$ be such that $B_r=B_r(x_0) \subset \Omega$. Then for any non-negative $\phi \in \C_c^{\infty}(B_r)$ the following estimate holds true 
 \begin{align}\label{cacc}
     & \iint_{B_r \times B_r} \K(|x-y|) |w_{+}(x)\phi(x) - w_{+}(y)\phi(y)|^p \dxy \no \\
     & \le C \int_{B_r} |f(x) |w_+(x) \phi(x)^{p} \dx +  C \iint_{B_r \times B_r} \K(|x-y|) \left(\max \{ w_{+}(x) , w_{+}(y) \}\right)^p |\phi(x) - \phi(y)|^p \dxy \no \\
     & \quad + C \left( \underset{x \in \text{supp}(\phi)}{\esssup}\, \int_{\rd \setminus B_{r}}       \left( \frac{1}{|x-y|^{d+sp}} + \Kp(|x-y|)\right) w_+(y)^{p-1}
     \dy \right) \int_{B_{r}} w_+(x)\phi(x)^{p} \dx \no \\
     & \quad + C \left( \underset{x \in \text{supp}(\phi)}{\esssup}\, \int_{\rd \setminus B_{r}}       \Km(|x-y|) w_-(y)^{p-1}\dy \right) \int_{B_{r}} w_+(x)\phi(x)^{p} \dx \no \\
     & \quad + C\left( \underset{x \in \text{supp}(\phi)}{\esssup}\, \int_{\rd \setminus B_{r}} \Km(|x-y|) 
       \dy \right) \int_{B_{r}} w_+(x)^p\phi(x)^{p} \dx,
 \end{align}
for some $C>0$. If $u$ is a weak supersolution of \eqref{main_PDE}, the estimate in \eqref{cacc} holds with $w_{+}$ replaced by $w_-$ and vice versa.
\end{lemma}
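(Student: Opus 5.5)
I would follow the standard Di Castro--Kuusi--Palatucci route. The test function is $v=w_+\phi^p$, which is admissible by Remark \ref{C_c}: it is nonnegative and lies in $W_0^{s+\log,p}(K)$ with $K=\operatorname{supp}\phi$. Inserting it into the weak subsolution inequality \eqref{weak1} immediately gives the source term $\int_{B_r} f w_+\phi^p$, bounded by $\int |f| w_+\phi^p$. I then split $\rd\times\rd$ into the local part $B_r\times B_r$ and twice the nonlocal part $B_r\times(\rd\setminus B_r)$, using symmetry; on $(\rd\setminus B_r)^2$ the test function vanishes.

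On $B_r\times B_r$ we have $|x-y|<2r\le 1$, so $-\log|x-y|>0$ and $\K(|x-y|)\ge C_{d,s,p}B_{d,s,p}|x-y|^{-d-sp}+p\Kp\ge 0$, using $s<s_0$ so that $B_{d,s,p}\ge0$. Here the kernel is a genuine positive kernel. I would apply, pointwise, the algebraic inequality from \cite{KuusiHarnack}:
\begin{align*}
|a-b|^{p-2}(a-b)\,(w_+(x)\phi^p(x)-w_+(y)\phi^p(y)) \ge c\,|w_+(x)\phi(x)-w_+(y)\phi(y)|^p - C\max\{w_+(x),w_+(y)\}^p|\phi(x)-\phi(y)|^p,
\end{align*}
with $a=u(x)$, $b=u(y)$. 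This holds after the usual case analysis: both points above level $k$, one above and one below (where $(u(x)-u(y))^{p-1}\ge w_+(x)^{p-1}$), or both below, where the term vanishes. Multiplying by $\K\ge0$ and integrating yields the left-hand side of \eqref{cacc} and the second term on the right.

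For $x\in B_r\cap\operatorname{supp}\phi$ and $y\notin B_r$ the integrand is $\K(|x-y|)|u(x)-u(y)|^{p-2}(u(x)-u(y))w_+(x)\phi^p(x)$, and we need a lower bound on it. I would use the decomposition \eqref{split-2}. The positive pieces $C B_{d,s,p}|x-y|^{-d-sp}+p\Kp$ multiply $(u(x)-u(y))^{p-1}$. When $u(x)>k$ this is bounded below by $-(u(y)-u(x))_+^{p-1}\ge -w_+(y)^{p-1}$. Those pieces therefore contribute at least $-C\int (|x-y|^{-d-sp}+\Kp)w_+(y)^{p-1}\,w_+(x)\phi^p(x)$, which after taking the supremum over $x\in\operatorname{supp}\phi$ is the third term.

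The negative piece $-p\Km$ is the main obstacle, since it flips the sign and we must bound $(u(x)-u(y))_+^{p-1}$ from above instead. I would write $u(x)-u(y)\le w_+(x)+(k-u(y))_+ = w_+(x)+w_-(y)$ when $u(x)>k$, and use $(a+b)^{p-1}\le C(a^{p-1}+b^{p-1})$. The $w_-(y)^{p-1}$ part gives the fourth term. The $w_+(x)^{p-1}\cdot w_+(x)\phi^p = w_+^p\phi^p$ part gives the fifth term, and there the kernel integral $\int_{\rd\setminus B_r}\Km$ is finite by Remark \ref{support-positive-kernel}.

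Collecting terms gives \eqref{cacc}. For supersolutions, apply the result to $-u$, which is a subsolution with datum $-f$; this swaps $w_+$ and $w_-$.
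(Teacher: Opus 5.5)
Your proposal is correct and follows essentially the same route as the paper. You use the same test function $w_+\phi^p$, the same splitting into $B_r\times B_r$ (where $\K\ge 0$ and the Di Castro--Kuusi--Palatucci algebraic inequality applies) and the off-diagonal part handled via \eqref{split-2}, with the positive pieces bounded below by $-w_+(y)^{p-1}$ and the $-p\Km$ piece controlled through $u(x)-u(y)\le w_+(x)+w_-(y)$. Your reduction of the supersolution case to $-u$ (at level $-k$, which is harmless since the argument never uses the sign of $k$) is a clean justification of the paper's ``vice versa''.
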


\begin{proof}
   Since $u$ is a subsolution to \eqref{main_PDE}, taking $v:= w_+ \phi^{p} \in W_{0}^{s+\log,p}(B_r)$ (see Remark \ref{C_c}) as a nonnegative test function, we write 
   \begin{align*}
        \iint_{\rd \times \rd} \K(|x-y|) |u(x)-u(y)|^{p-2}(u(x)-u(y)) (v(x)-v(y)) \dxy \leq \int_{B_r} f(x)v(x) \dx,
   \end{align*}
   where, using the symmetry of the kernel, we split 
   \begin{align*}
       &\iint_{\rd \times \rd} \K(|x-y|) |u(x)-u(y)|^{p-2} (u(x)-u(y)) (v(x)-v(y)) \dxy \\
       &=\left(\iint_{B_r \times B_r} + 2\iint_{\rd \setminus B_r \times B_r} \right)  \K(|x-y|) |u(x)-u(y)|^{p-2} (u(x)-u(y)) (v(x)-v(y)) \dxy.
   \end{align*}
    Note that, $\K(|x-y|) \ge 0$ for every $x, y \in B_r$. Now, following the same line of arguments as in \cite[Theorem 1.4]{Palatucci2016}, we get 
   \begin{align}\label{ener-0}
       & \iint_{B_r \times B_r} \K(|x-y|)  |u(x)-u(y)|^{p-2}(u(x)-u(y)) (v(x)-v(y)) \dxy \no \\ 
       & \ge  C\iint_{B_{r} \times B_{r}} \K(|x-y|) |w_+(x)\phi(x) - w_+(y)\phi(y)|^p \dxy \no \\
       &  - C \iint_{B_{r} \times B_{r}} \K(|x-y|) \max\{w_+(x), w_+(y)\}^{p} |\phi(x)-\phi(y)|^{p} \dxy.
   \end{align}
   Now, the kernel $\K(|x-y|)$ changes sign for $x \in B_r$ and $y \in \rd \setminus B_r$. Using \eqref{split-2}, we write 
   \begin{align}\label{ener-1}
       & \iint_{\rd \setminus B_r \times B_r} \K(|x-y|) |u(x)-u(y)|^{p-2}(u(x)-u(y))(v(x)-v(y)) \dxy\no \\
       & = \iint_{\rd \setminus B_r \times B_r} \left(  C_{d,s,p}B_{d,s,p} |x-y|^{-d-sp} + p\Kp(|x-y|) - p\Km(|x-y|) \right) \no \\
       & \quad\quad  |u(x)-u(y)|^{p-2}(u(x)-u(y)) (v(x)-v(y)) \dxy,
   \end{align}
  where using the inequalities given in \cite[Pg. 1285]{Palatucci2016}, it holds 
   \begin{align*}
       &\iint_{\rd \setminus B_r \times B_r} |u(x)-u(y)|^{p-2} (u(x)-u(y)) (v(x)-v(y)) \frac{\dxy}{|x-y|^{d+sp}} \\
       & \ge - \iint_{\rd \setminus B_r \times B_r} |x-y|^{-d-sp} w_+(y)^{p-1} w_+(x)\phi(x)^{p} \dxy, 
   \end{align*}
   and 
   \begin{align*}
       &\iint_{\rd \setminus B_r \times B_r} \Kp(|x-y|)  |u(x)-u(y)|^{p-2}(u(x)-u(y)) (v(x)-v(y)) \dxy \\
       & \ge - \iint_{\rd \setminus B_r \times B_r} \Kp(|x-y|) w_+(y)^{p-1} w_+(x)\phi(x)^{p} \dxy. 
   \end{align*}
   Further, 
   \begin{align*}
      & \iint_{\rd \setminus B_r \times B_r} -\Km(|x-y|)  |u(x)-u(y)|^{p-2}(u(x)-u(y)) (v(x)-v(y)) \dxy \\
      & = \iint_{\rd \setminus B_r \times B_r} -\Km(|x-y|) |u(x)-u(y)|^{p-2} (u(x)-u(y)) w_+(x) \phi(x)^{p} \dxy,
   \end{align*}
where using the fact that support of $w_+$ is $\{ u(x) \ge k\}$ and $-(u(y)-k) \le w_-(y)$, we get
\begin{align*}
    w_+(x)  |u(x)-u(y)|^{p-2}(u(x)-u(y)) &\leq   w_+(x) \left(u(x)-u(y)\right)_+^{p-1}\\&=  w_+(x) \left(u(x)-k+k-u(y)\right)_+^{p-1} \\&\leq C(p)(w_+(x)^p  + w_+(x)(k-u(y))_{+}^{p-1} )\\
    &= C(p)( w_+(x)^p + w_+(x)w_-(y)^{p-1}).
\end{align*}
 Also $\Km(|x-y|) \ge 0$ for every $x, y \in \rd$. Hence, 
\begin{align*}
    &\iint_{\rd \setminus B_r \times B_r} - \Km(|x-y|)|u(x)-u(y)|^{p-2}(u(x)-u(y)) (v(x)-v(y)) \dxy \no \\
    &\ge -C(p)\iint_{\rd \setminus B_r \times B_r}  \left( \Km(|x-y|)w_+(x)^p + \Km(|x-y|)w_+(x)w_-(y) ^{p-1}\right) \phi(x)^{p} \dxy.
\end{align*}
Hence using \eqref{ener-1}, we see that 
\begin{align}\label{ener-3}
    & \iint_{\rd \setminus B_r \times B_r} \K(|x-y|)|u(x)-u(y)|^{p-2}(u(x)-u(y))(v(x)-v(y)) \dxy \no \\
    & \ge - C(d,s,p) \Bigg( \iint_{\rd \setminus B_r \times B_r} \left( \frac{1}{|x-y|^{d+sp}} + \Kp(|x-y|)\right) w_+(y)^{p-1} w_+(x)\phi(x)^{p}  \dxy \no \\
    & \quad -C(p) \iint_{\rd \setminus B_r \times B_r} \Km(|x-y|) w_-(y)^{p-1}  w_+(x)\phi(x)^{p} \dxy \no \\
    & \quad - C(p)\iint_{\rd \setminus B_r \times B_r} \Km(|x-y|)w_+(x)^p \phi(x)^{p} \dxy \Bigg),
\end{align}
where 
\begin{align*}
    & -\iint_{\rd \setminus B_r \times B_r} \Km(|x-y|)w_+(x)^p \phi(x)^{p} \dxy \\
    & \ge - \left( \underset{x \in \text{supp}(\phi)}{\esssup}\, \int_{\rd \setminus B_{r}} \Km(|x-y|) 
       \dy \right) \int_{B_{r}} w_+(x)^p\phi(x)^{p} \dx, 
\end{align*}
and 
\begin{align*}
    & -\iint_{\rd \setminus B_r \times B_r} \Km(|x-y|) w_+(x)w_-(y)^{p-1}  \phi(x)^{p} \dxy \\
    & \ge -\left( \underset{x \in \text{supp}(\phi)}{\esssup}\, \int_{\rd \setminus B_{r}} \Km(|x-y|) w_-(y) ^{p-1} \dy \right) \int_{B_{r}} w_+(x)\phi(x)^{p} \dx. 
\end{align*}
Combining \eqref{ener-0}, \eqref{ener-1}, and \eqref{ener-3}, there exists $C=C(d,s,p)$ such that  
\begin{align}\label{ener-6}
     & \iint_{B_r \times B_r} \K(|x-y|) |w_{+}(x)\phi(x) - w_{+}(y)\phi(y)|^p \dxy \no \\
     & \le C \iint_{B_r \times B_r} \K(|x-y|) \left(\max \{ w_{+}(x) , w_{+}(y) \}\right)^p |\phi(x) - \phi(y)|^p \dxy \no \\
     & \quad + C \iint_{\rd \setminus B_r \times B_r} \left( \frac{1}{|x-y|^{d+sp}} + \Kp(|x-y|)\right) w_+(y)^{p-1}  w_+(x)\phi(x)^{p} \dxy \no \\
     & \quad + C \bigg(\underset{x \in \text{supp}(\phi)}{\esssup}\, \int_{\rd \setminus B_{r}} \Km(|x-y|) w_-(y) ^{p-1} \dy \bigg)\int_{B_{r}} w_+(x)\phi(x)^{p} \dx  \no \\
     & \quad + C\bigg( \underset{x \in \text{supp}(\phi)}{\esssup}\, \int_{\rd \setminus B_{r}} \Km(|x-y|) \dy \bigg) \int_{B_{r}} w_+(x)^p\phi(x)^{p} \dx.
\end{align}
Further, 
\begin{align*}
    & \iint_{\rd \setminus B_r \times B_r}  \left( \frac{1}{|x-y|^{d+sp}} + \Kp(|x-y|)\right) w_+(x)w_+(y)^{p-1}  \phi(x)^{p} \dxy \\
    & \le \left( \underset{x \in \text{supp}(\phi)}{\esssup}\, \int_{\rd \setminus B_{r}} \left( \frac{1}{|x-y|^{d+sp}} + \Kp(|x-y|)\right) w_+(y)^{p-1} 
       \dy \right) \int_{B_{r}} w_+(x)\phi(x)^{p} \dx. 
\end{align*} 
Therefore, from \eqref{ener-6} the required estimate holds. 
\end{proof}

Next, we prove the local boundedness of weak solutions to \eqref{main_PDE}. The following lemma is required for local boundedness. For the proof, see \cite[Lemma 4.1]{Dibe}.

\begin{lemma}[Iteration lemma]\label{iteration}
Let $(Y_j)_{j=0}^{\infty}$ be a sequence of positive real numbers such that
\begin{align*}
    Y_0 \leq \displaystyle c_{0}^{-\frac{1}{\delta}} b^{-\frac{1}{\delta^2}} \text{ and } Y_{j+1}\leq c_0 b^{j} Y_j^{1+\delta},
\end{align*}
$j=0,1,2,\dots$, for some constants $c_0,b>1$ and $\delta>0$. Then $Y_j \ra 0$ as $j \ra \infty$.
\end{lemma}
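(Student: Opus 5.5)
Since $c_0,b>1$, the natural approach is to prove by induction the geometric decay
\begin{align*}
Y_j \le b^{-\frac{j}{\delta}}\,Y_0 \qquad \text{for all } j\ge 0,
\end{align*}
which immediately gives $Y_j\to0$ as $j\to\infty$. The case $j=0$ is trivial.

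For the inductive step, assume $Y_j\le b^{-j/\delta}Y_0$. The recursion gives
\begin{align*}
Y_{j+1}\le c_0 b^{j}Y_j^{1+\delta}\le c_0 b^{j}\,b^{-\frac{j(1+\delta)}{\delta}}Y_0^{1+\delta} = c_0\, b^{-\frac{j}{\delta}}\,Y_0^{\delta}\,Y_0 .
\end{align*}
We want this to be at most $b^{-\frac{j+1}{\delta}}Y_0$. That holds provided $c_0 Y_0^{\delta}\le b^{-1/\delta}$, i.e. $Y_0\le c_0^{-1/\delta}b^{-1/\delta^2}$, which is exactly the smallness hypothesis. This closes the induction.

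The only delicate point is choosing the decay rate $b^{-1/\delta}$ so that the factor $b^{j}$ cancels exactly against $b^{-j(1+\delta)/\delta}$. Once that exponent is guessed, the induction is routine. Positivity of the $Y_j$ is used only so that the powers $Y_j^{1+\delta}$ make sense and are monotone in $Y_j$.
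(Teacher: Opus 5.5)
Your induction $Y_j\le b^{-j/\delta}Y_0$ is correct, and the exponent bookkeeping closes exactly under the smallness hypothesis $Y_0\le c_0^{-1/\delta}b^{-1/\delta^2}$. The paper gives no proof of its own and only cites \cite[Lemma 4.1]{Dibe}, whose proof is this same induction (only $b>1$ is actually needed for the decay, not $c_0>1$).
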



\begin{proposition}[Local boundedness I]\label{local-boundedness-I}
Let $s \in (0,s_0), d>sp,$ and $f$ be as given in \eqref{weight}. Assume that $u$ is a weak subsolution to \eqref{main_PDE}. Let $0<r\leq \frac{1}{8}$ be such that $B_r(x_0) \subset B_R(x_0)$. Then there exists $C=C(d,s,p)$ such that the following holds:
\begin{align}\label{loc.bounded}
    \underset{B_{\frac{r}{2}}(x_0)}{\esssup}\,u & \leq  \delta\left(1-\log(\frac{r}{2})\right)^{-\frac{1}{p-1}} \left( \mathrm{Tail}_+(u_+;x_0,\frac{r}{2}) + \mathrm{Tail}_-(u_-;x_0,\frac{r}{2}) \right)\no \\
    &\quad +C\delta^{-\frac{(p-1)p_s^*}{p(p_s^*-p\sig)}}\left(\fint_{B_r(x_0)}|u|^{p\sig}\dx\right)^{\frac1{p\sig}} \no \\
    &\quad + \delta^{\frac{\sigma(p-1)}{p\sigma-1}} \left(1-\log(\frac{r}{2})\right)^{-\frac{\sigma}{p\sigma-1}}  r^{\left( sp-d+\frac{d}{\sigma}\right)\frac{\sigma}{p\sigma-1}}\norm{f}_{L^q(B_r(x_0))}^{\frac{1}{p-1}},
\end{align}
where $\delta\in(0,1], \sigma=\frac{(dp-d+sp)q-d}{(dp-d+sp)q-dp}>1$, and $p\sigma<p_s^*$.
\end{proposition}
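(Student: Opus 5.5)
I will run the De Giorgi iteration of Di Castro--Kuusi--Palatucci on shrinking balls and increasing levels, feeding it the Caccioppoli inequality (Lemma \ref{energy estimate}) and the logarithmic Poincar\'e--Sobolev inequality (Proposition \ref{poincare}). The nonstandard features are handled as follows. The sign-changing far field gives two tails. The $\log$ in the kernel gives the factor $(1-\log\frac r2)^{-1}$. The $L^q$ datum $f$ is absorbed through the exponent $\sigma$.

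\textbf{Setup.} For $j\ge0$ set
\[
r_j=\tfrac r2(1+2^{-j}),\qquad \tilde r_j=\tfrac{r_j+r_{j+1}}2,\qquad B_j=B_{r_j}(x_0),\qquad \tilde B_j=B_{\tilde r_j}(x_0).
\]
Take levels $k_j=k(1-2^{-j})$ with $k>0$ to be fixed at the end, and put $\tilde k_j=\tfrac{k_j+k_{j+1}}2$, $w_j=(u-k_j)_+$, $\tilde w_j=(u-\tilde k_j)_+$. Choose cut-offs $\phi_j\in\C_c^\infty(\tilde B_j)$ with $\phi_j\equiv1$ on $B_{j+1}$ and $|\nabla\phi_j|\lesssim 2^j/r$. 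The quantity to iterate is
\[
Y_j=\Big(\fint_{B_j}w_j^{p\sigma}\dx\Big)^{1/(p\sigma)}/k .
\]

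\textbf{Upper bound for the energy.} Apply \eqref{cacc} on $B_j$ with $w_+=\tilde w_j$. On $B_j\times B_j$ we have $|x-y|<2r\le\frac14$, so $\K(|x-y|)\lesssim |x-y|^{-d-sp}\big(1+(-\log|x-y|)\big)$. Integrating against $|\phi_j(x)-\phi_j(y)|^p\lesssim (2^j|x-y|/r)^p$ then gives
\[
\text{(second term)}\lesssim 2^{jp}\,r^{-sp}\,(1-\log r)\int_{B_j}\tilde w_j^p\dx .
\]
The key radial computation is $\int_0^{2r}\tau^{p-sp-1}(1-\log\tau)\,d\tau\lesssim r^{p-sp}(1-\log r)$.

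For the tail terms, note that for $x\in\tilde B_j$ and $y\notin B_j$ we have $|x-y|\ge 2^{-j-2}r_j^{-1}\cdots\gtrsim 2^{-j}|x_0-y|$. Since $\log$ is monotone, this transfers $\Kp(|x-y|)$ to $C2^{j(d+sp)}\big(\Kp(|x_0-y|)+|x_0-y|^{-d-sp}\big)$, and $\Km$ similarly. The bound $\tilde w_j\le u_+$ then yields $r^{-sp}\,\mathrm{Tail}_+(u_+;x_0,\frac r2)^{p-1}$. The $w_-$ term is handled using $w_-\le u_-+\tilde k_j\le u_-+k$. Its $u_-$ part gives $r^{-sp}\,\mathrm{Tail}_-(u_-)^{p-1}$. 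Its $k$ part, like the last term of \eqref{cacc}, is bounded via Remark \ref{support-positive-kernel} by $\widetilde C$, which is $\le r^{-sp}$ up to constants since $r\le\frac18$.

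To turn products such as $\tilde w_j\cdot\text{Tail}^{p-1}$ and $\tilde w_j^p$ into powers of $w_j$, use $\tilde w_j\le w_j^{p}/(\tilde k_j-k_j)^{p-1}$ and $\chi_{\{\tilde w_j>0\}}\le (w_j/(2^{-j-2}k))^{p\sigma}$. The $f$ term is bounded by H\"older:
\[
\int f\tilde w_j\phi^p\le \|f\|_{L^q}\,\|\tilde w_j\|_{L^{p\sigma}}\,|\{\tilde w_j>0\}\cap B_j|^{1-\frac1q-\frac1{p\sigma}} .
\]
The definition of $\sigma$ is designed exactly so that these exponents, together with the Sobolev gain, close up.

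\textbf{Lower bound and recursion.} On $B_{j+1}$ combine H\"older $p\sigma<p_s^*$ with Proposition \ref{poincare} applied to $\tilde w_j\phi_j$; the mean term is controlled by the $L^p$ norm. The left side of \eqref{cacc} dominates $p\iint\Kp|\cdot|^p$, because $B_{d,s,p}\ge0$ for $s<s_0$. The Poincar\'e factor $r^{sp}/(-\log 2r)$ multiplies the energy bound, whose log factors are $(1-\log r)$. Since $g(r)=(1-\log r)/(-\log 2r)$ is bounded for $r\le\frac18$, the local term is scale-invariant. The tail and $f$ terms, however, remain divided by $(-\log 2r)\simeq(1-\log\frac r2)$. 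Altogether one obtains
\[
Y_{j+1}\le C\,b^j\,Y_j^{1+\delta_0}\Big(1+\frac{\mathrm{Tails}^{p-1}}{(1-\log\frac r2)\,k^{p-1}}+\frac{r^{sp-d/q}\|f\|_{L^q}}{(1-\log\frac r2)\,k^{p-1}}\Big)^{\theta}
\]
for some $\delta_0>0$ determined by $p\sigma<p_s^*$.

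\textbf{Choice of $k$ and conclusion.} Choose $k$ as the sum of three pieces:
\[
k=\delta(1-\log\tfrac r2)^{-1/(p-1)}\,\text{Tails}
\;+\;C\delta^{-\gamma}\,\|u\|_{L^{p\sigma}\text{-avg}(B_r)}
\;+\;\text{the }f\text{ piece},
\]
with $\gamma=\frac{(p-1)p_s^*}{p(p_s^*-p\sigma)}$. The first piece makes the bracket $\lesssim\delta^{-(p-1)}$. The second piece makes $Y_0\le c_0^{-1/\delta_0}b^{-1/\delta_0^2}$ once the $\delta$-dependence of $c_0$ is tracked. The third piece carries exponents $\frac{\sigma(p-1)}{p\sigma-1}$ and $\frac{\sigma}{p\sigma-1}$, which arise from balancing $k^{p-1}$ against the $f$ term with the measure-power $Y_j^{p\sigma(\cdot)}$. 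Lemma \ref{iteration} then gives $Y_j\to0$, i.e.\ $u\le k$ on $B_{r/2}$.

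\textbf{Main obstacle.} I expect the hardest part to be the bookkeeping in the $f$ term and the $\delta$-interpolation. Getting precisely the stated exponents of $\delta$, $r$ and $(1-\log\frac r2)$ requires choosing the homogeneity of $k$ carefully and comparing the $\log$ factors from the kernel and from Poincar\'e consistently.
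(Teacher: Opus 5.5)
The overall scheme matches the paper's proof. That includes the radii and levels, Lemma \ref{energy estimate}, and moving both logarithmic kernels from $x$ to $x_0$ via $|x_0-y|\le 2^{j+4}|x-y|$. It also includes the $\widetilde C$ bound for the $\Km$-mass, Proposition \ref{poincare} with its $(-\log 2r)^{-1}$ gain, and Lemma \ref{iteration}. The gap is in the source term. That is exactly where the specific $\sigma$ and the exponents of $\delta$ and of $1-\log\frac r2$ in \eqref{loc.bounded} come from, and your treatment does not produce them.

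\emph{The H\"older step.} Your three-factor H\"older bound needs $\frac1q+\frac1{p\sigma}\le 1$. This does not follow from $q>\frac d{sp}$ and $p\sigma<p_s^*$; for example it fails for $d=1$, $p=q=\frac32$, $s=0.59$.

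\emph{The homogeneity mismatch.} Even when that bound holds, insert $|\{\tilde w_j>0\}\cap B_j|\le 2^{(j+2)p\sigma}|B_j|\,Y_j^{p\sigma}$ and multiply by the Poincar\'e factor. The $f$-contribution is then of order
\[
2^{cj}\,\frac{r^{sp-\frac dq}\norm{f}_{L^q}}{-\log(2r)}\,k\,Y_j^{\,p\sigma(1-\frac1q)},
\]
whereas every other term is a coefficient times $k^pY_j^p$. With the stated $\sigma$, one checks that $\sigma(1-\frac1q)<1$ is \emph{equivalent} to $q>\frac d{sp}$. So the $f$-term has strictly lower homogeneity. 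The recursion then closes only with gain exponent $\frac{d(q-1)}{(d-sp)q}$, which is strictly smaller than $\frac{p_s^*}{p\sigma}$.

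\emph{Consequences for the exponents.} Tracking $\delta$ through Lemma \ref{iteration} gives $\delta^{-\gamma'}$ in front of the $L^{p\sigma}$-average, with $\gamma'>\frac{(p-1)p_s^*}{p(p_s^*-p\sigma)}$. Balancing $k^{p-1}$ against the $f$-coefficient gives $\delta\,(1-\log\frac r2)^{-\frac1{p-1}}$, not $\delta^{\frac{\sigma(p-1)}{p\sigma-1}}(1-\log\frac r2)^{-\frac{\sigma}{p\sigma-1}}$. Your estimate is therefore better in the third term but worse in the second, so it does not imply \eqref{loc.bounded} for all $\delta\in(0,1]$. Also, in your route the particular value of $\sigma$ plays no role, contrary to your claim that it is designed to make the exponents close. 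Later sections only use a fixed $\delta$, so your variant would still serve there, but it is not the stated proposition.

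\emph{The missing idea: interpolate and absorb.} Write $\int|f|\bar w_j\phi_j^p\le\norm{f}_{L^q}\norm{\bar w_j\phi_j}_{L^{q'}}$. Since $q'<p_s^*$, interpolate $L^{q'}$ between $L^{p_s^*}$ and $L^1$ with $\alpha=\frac{p_s^*}{(p_s^*-1)q}\in(0,1)$. Then apply Young with exponents $(\frac p\alpha,\frac p{p-\alpha})$ and $\tilde\varepsilon\simeq\norm{f}_{L^q}^{-1}$. Absorb $\tilde\varepsilon\norm{f}_{L^q}\norm{\bar w_j\phi_j}^p_{L^{p_s^*}}$ into the Sobolev side; no condition like $\frac1q+\frac1{p\sigma}\le1$ is needed.

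What remains is $\norm{f}_{L^q}^{\frac p{p-\alpha}}\norm{\bar w_j\phi_j}_{L^1}^{1/\sigma}$ with $\sigma=\frac{p-\alpha}{p(1-\alpha)}$, which is precisely the $\sigma$ of the statement. The bound $\norm{\bar w_j\phi_j}_{L^1}^{1/\sigma}\le(2^{j+2}/\bar k)^{p-\frac1\sigma}\big(\int_{B_j}w_j^{p\sigma}\big)^{1/\sigma}$ has the same homogeneity $\bar k^{p}Y_j^p$ as every other term. So there is a single gain exponent $\frac{p_s^*}{p\sigma}$, which yields the stated $\delta^{-\frac{(p-1)p_s^*}{p(p_s^*-p\sigma)}}$. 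Finally, requiring $\frac{r^{sp-d+d/\sigma}\norm{f}_{L^q}^{p/(p-\alpha)}}{(1-\log\frac r2)\,\bar k^{\,p-1/\sigma}}\le\delta^{1-p}$ gives exactly $\delta^{\frac{p-\alpha}{p}}=\delta^{\frac{\sigma(p-1)}{p\sigma-1}}$ and $(1-\log\frac r2)^{-\frac{1}{p-1/\sigma}}=(1-\log\frac r2)^{-\frac{\sigma}{p\sigma-1}}$.
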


\begin{proof}
    For $r\in(0,1)$ and $j=0,1,2, \cdots$, define $$r_j=\frac{r}{2}(1+2^{-j}),\,\overline{r}_j=\frac{r_j+r_{j+1}}2,\,B_j=B_{r_j}(x_0),\, \text{ and } \,\overline{B}_j=B_{\overline{r}_j}(x_0).$$ Let $\{\phi_j\}_{j=1}^\infty\subset \C_c^{\infty}(\overline{B}_j)$ be a nonnegative sequence of cut-off functions such that $$0\leq \phi_j\leq 1\text{ in }\overline{B}_j,\,\phi_j\equiv1\text{ on }B_{j+1},\, |\nabla\phi_j|\leq \frac{2^{j+3}}{r}.$$
    For $k,\overline{k}\geq0$, we denote $$k_j=k+(1-2^{-j})\overline{k},\,\overline{k}_j=\frac{k_j+k_{j+1}}{2},\, w_j=(u-k_j)_+, \, \overline{w}_j=(u-\overline{k}_j)_+, \text{ and }  \underline{w}_j=(u-\overline{k}_j)_-. $$
Using the energy estimate (Lemma \ref{energy estimate}) and Remark \ref{support-positive-kernel}, we obtain
\begin{align}\label{caccioppoli-1}
    & \iint_{B_j \times B_j} \K(|x-y|) |\overline{w}_j(x)\phi_j(x) - \overline{w}_j(y)\phi_j(y) |^p \dxy \no \\
     & \le C \int_{B_j} |f(x)| \overline{w}_j(x) \phi_j(x)^{p} \dx \no \\
     & \quad +  C \iint_{B_j \times B_j} \K(|x-y|) \left(\max \{ \overline{w}_j(x) , \overline{w}_j(y) \}\right)^p |\phi_j(x) - \phi_j(y)|^p \dxy \no \\
     & \quad + C \left( \underset{x \in \text{supp}(\phi_j)}{\esssup}\, \int_{\rd \setminus B_j} \left( \frac{1}{|x-y|^{d+sp}} + \Kp(|x-y|)\right) \overline{w}_j(y)^{p-1}\dy \right) \int_{B_j} \overline{w}_j(x) \phi_j(x)^{p} \dx \no \\
     & \quad + C \left( \underset{x \in \text{supp}(\phi_j)}{\esssup}\, \int_{\rd \setminus B_j} \Km(|x-y|) \underline{w}_j(y)^{p-1}\dy \right) \int_{B_j} \overline{w}_j(x) \phi_j(x)^{p} \dx \no \\
     & \quad + C2^{j(d+sp)}(r_j^{-sp}+\widetilde{C})\int_{B_j} \overline{w}_j(x)^p\phi_j(x)^{p} \dx,
\end{align}
where $C=C(d,s,p)$.

We first explain the last term in the inequality \eqref{caccioppoli-1}. Observe that  
\begin{align*}
    \abs{y-x_0} \le 2^{j+4} \abs{y-x}, \text{ and } |y-x|\leq 2|y-x_0|,  
\end{align*}
for every $x\in \operatorname{supp}(\phi)_j$ and $y\in \rd\setminus B_j$. The second inequality yields 
\begin{align}\label{for log}
(-\log |y-x|)_-\leq (-\log 2-\log |y-x_0|)_- \leq \log 2+(-\log |y-x_0|)_-.    
\end{align}
Therefore, using \eqref{for log} and Remark \ref{support-positive-kernel}, 
\begin{align*}
\int_{\rd \setminus B_j} \Km(|x-y|) \dy \leq 2^{j(d+sp)} \int_{\rd \setminus B_j} \frac{\log 2+(-\log |y-x_0|)_-}{|y-x_0|^{d+sp}}\dy \le C2^{j(d+sp)}(r_j^{-sp}+\widetilde{C}).
\end{align*}

Next, we estimate the term involving the weight function. Applying H\"{o}lder's inequality, 
\begin{align*}
    \int_{B_j} |f(x)| \overline{w}_j(x) \phi_j(x)^{p} \dx \le \norm{f\phi_j}_{L^q(B_j)} \left\|\overline{w}_j\phi_j\right\|_{L^{\frac{q}{q-1}}(B_j)} \le \norm{f}_{L^q(B_j)} \left\|\overline{w}_j\phi_j\right\|_{L^{\frac{q}{q-1}}(B_j)}.
\end{align*}
Now we apply the generalized H\"{o}lder's inequality $\frac{1}{p_1} = \frac{\al}{p_2} + \frac{1-\al}{p_3}$ with the conjugate triplet $(p_1,p_2,p_3)$, where 
\begin{align*}
    p_1=\frac{q}{q-1}, p_2= p_s^*, p_3=1, \text{ and } \al=\frac{p_s^*}{(p_s^*-1)q},
\end{align*}
and $\al \in (0,1)$ by observing the fact that 
\begin{align*}
    q>\frac{d}{sp} \Longrightarrow \frac{q}{q-1}<p_s^* \Longrightarrow q > \frac{p_s^*}{p_s^*-1}.
\end{align*}
Hence,
\begin{align*}
    \left\|\overline{w}_j\phi_j \right\|_{L^{\frac{q}{q-1}}(B_j)}  \le \left\| \overline{w}_j \phi_j \right\|_{L^{p_s^*}(B_j)}^{\al} \left\|\overline{w}_j\phi_j\right\|_{L^1(B_j)}^{1-\al}.
\end{align*}
Now, for any $\widetilde{\varepsilon}>0$, applying the Young's inequality with coefficients $(\frac{p}{\al},\frac{p}{p-\al})$, we obtain 
\begin{align*}
\left\|\overline{w}_j\phi_j\right\|_{L^{p_s^*}(B_j)}^\al\,\left\|\overline{w}_j\phi_j\right\|_{L^{1}(B_j)}^{1-\al} \leq \widetilde{\varepsilon} \|\overline{w}_j\phi_j\|_{L^{p_s^*}(B_j)}^p+\widetilde{\varepsilon}^{-\frac{\alpha}{p-\alpha}}\|\overline{w}_j\phi_j\|_{L^{1}(B_j)}^{\frac{p(1-\al)}{p-\al}}.
\end{align*}
Define $\sig:=\frac{p-\al}{p(1-\al)}>1$. Then $p\sig\in(p,p_s^*)$. Using the relation 
\begin{align*}
   \frac{p(p-1)}{p- \alpha} = p - \frac{1}{\sigma}, \; 
    \overline{w}_j\leq(\overline{k}_j-k_j)^{1-p\sig}w_j^{p\sig}, 
\end{align*}
we get
\begin{align*}
     \|\overline{w}_j\phi_j\|_{L^1(B_j)}^{\frac1\sig}\leq\left(\frac{1}{\overline{k}_j-k_j}\right)^{p-\frac1\sig}\left(\int_{B_j}w_j^{p\sig} \dx \right)^{\frac1\sig} \le C\left(\frac{2^{j+2}}{\overline{k}}\right)^{p-\frac1\sig}\left(\int_{B_j}w_j^{p\sig} \dx \right)^{\frac1\sig}.
\end{align*}
Hence, we have the following estimate 
\begin{align}\label{lb-2}
    \int_{B_j} |f(x)| \overline{w}_j \phi_j^{p} \dx \le \norm{f}_{L^q(B_j)} \left( \widetilde{\varepsilon} \|\overline{w}_j\phi_j\|_{L^{p_s^*}(B_j)}^p + \widetilde{\varepsilon}^{-\frac{\alpha}{p-\alpha}} \left(\frac{2^{j+2}}{\overline{k}}\right)^{p-\frac1\sig}\left(\int_{B_j}w_j^{p\sig} \dx \right)^{\frac1\sig} \right),
\end{align}
where 
\begin{align*}
   \left( \int_{B_j} \abs{\overline{w}_j\phi_j}^{p^*_s} \dx \right)^{\frac{p}{p^*_s}} = r_j^{d-sp} \left( \fint_{B_j} \abs{\overline{w}_j\phi_j}^{p^*_s} \dx \right)^{\frac{p}{p^*_s}}, \text{ and } \left(\int_{B_j}w_j^{p\sig} \dx \right)^{\frac1\sig} = r_j^{\frac{d}{\sigma}} \left(\fint_{B_j}w_j^{p\sig} \dx \right)^{\frac1\sig}.
\end{align*}
From Remark \ref{C_c}, $\overline{w}_j\phi_j \in W_0^{s, \log}(B_j)$. Set
\begin{align*}
   W_j:= \overline{w}_j\phi_j - \left| \fint_{B_j} \overline{w}_j\phi_j \dx \right| \in W^{s,\log}(B_j). 
\end{align*}
From the fractional Poincaré type inequality (Proposition \ref{poincare}), we get
\begin{align}\label{caccioppoli-1.5}
    & \left( \fint_{B_j} \abs{W_j}^{p^*_s} \dx \right)^{\frac{p}{p^*_s}} \no \\
    & \le C(d,s,p) \left( \frac{r_j^{sp-d}}{(-\log(2r_j))_+} \iint_{B_j \times B_j}\K(|x-y|) \left|\overline{w}_j(x)\phi_j(x)-\overline{w}_j(y)\phi_j(y)\right|^p \dxy \right).
\end{align}
By the reverse Minkowski inequality, 
\begin{align*}
    \left| \left( \fint_{B_j} \abs{\overline{w}_j\phi_j}^{p^*_s} \dx \right)^{\frac{1}{p^*_s}} - \left|(\overline{w}_j\phi_j)_{B_j}\right|  \right|^p & = \left| \left( \fint_{B_j} \abs{\overline{w}_j\phi_j}^{p^*_s} \dx \right)^{\frac{1}{p^*_s}} - \left( \fint_{B_j} \abs{(\overline{w}_j\phi_j)_{B_j}}^{p^*_s} \dx \right)^{\frac{1}{p^*_s}}  \right|^p \\
    & \le \left( \fint_{B_j} \left| W_j \right|^{p^*_s} \dx \right)^{\frac{p}{p^*_s}}.
\end{align*}
Now we combine the above estimate, \eqref{caccioppoli-1.5}, the energy estimate \eqref{cacc}, and \eqref{lb-2}. Further, we choose $\widetilde{\varepsilon} = \left( 2C(d,s,p) \norm{f}_{L^q(B_j)} \right)^{-1}$. Hence
\begin{align}\label{caccioppoli-2}
    &\left( \fint_{B_j} |\overline{w}_j\phi_j|^{p^*_s}\dx \right)^{\frac{p}{p^*_s}} \no \\
    &\le C \frac{r_j^{sp-d+\frac{d}{\sigma}}}{(-\log(2r_j))_+} \norm{f}_{L^q(B_j)}^{\frac{p}{p-\alpha}} \left(\frac{2^{j+2}}{\overline{k}}\right)^{p-\frac1\sig}\left(\fint_{B_j}w_j^{p\sig} \dx \right)^{\frac1\sig}  \no \\
    &+ C \frac{r_j^{sp}}{(-\log(2r_j))_+} \fint_{B_j} \int_{B_j} \K(|x-y|) \left(\max \{ \overline{w}_j(x) , \overline{w}_j(y) \}\right)^p |\phi_j(x) - \phi_j(y)|^p \dxy \no \\
    &+ C\frac{r_j^{sp}}{(-\log(2r_j))_+} \left( \underset{x \in \text{supp}(\phi_j)}{\esssup}\, \int_{\rd \setminus B_j} \left( \frac{1}{|x-y|^{d+sp}} + \Kp(|x-y|)\right) \overline{w}_j(y)^{p-1}\dy \right) \fint_{B_j} \overline{w}_j \phi_j^{p} \dx \no \\
    & +C\frac{r_j^{sp}}{(-\log(2r_j))_+} \left( \underset{x \in \text{supp}(\phi_j)}{\esssup}\, \int_{\rd \setminus B_j} \Km(|x-y|) \underline{w}_j(y)^{p-1}\dy \right) \fint_{B_j} \overline{w}_j \phi_j^{p} \dx \no \\
     &+ (1+\widetilde{C}(d,s,p)) C 2^{j(d+sp)}\fint_{B_j} \overline{w}_j^p\phi_j^{p} \dx,
\end{align}
where $C=C(d,s,p)$. We now estimate the terms present on the right-hand side. 

\underline{Bound on the R.H.S. of \eqref{caccioppoli-2}:} Using $\abs{\phi(x)-\phi(y)} \le \frac{C}{r_j} |x-y|$ for every $x,y \in B_j$,  we estimate 
\begin{align*}
    &\frac{r_j^{sp}}{(-\log(2r_j))_+} \fint_{B_j} \int_{B_j} \K(|x-y|) \left(\max \{ \overline{w}_j(x) , \overline{w}_j(y) \}\right)^p |\phi_j(x) - \phi_j(y)|^p \dxy\\
   & \le C 2^{jp} \frac{r_j^{sp-p}}{(-\log(2r_j))_+} \fint_{B_j} w_j(y)^p \left( \int_{B_j} \frac{C_{d,s,p}B_{d,s,p}}{|x-y|^{d+sp-p}} + 2 \frac{(-\log(|x-y|))_+}{|x-y|^{d+sp-p}} \dx \right) \dy \\
   & \le C 2^{jp} \left( \frac{1}{p(1-s)} + I \right) \fint_{B_j} w_j(y)^p \dy,
\end{align*}
where $C=C(d,s,p)$. We calculate 
\begin{align*}
    I:= \frac{r_j^{sp-p}}{(-\log(2r_j))_+} \om_{d} \int_{0}^{2r_j} \frac{(-\log(\tau))_+}{\tau^{sp-p+1}} \dtau = \frac{r_j^{sp-p}}{-\log(2r_j)} \om_{d} \int_{0}^{2r_j} \frac{-\log(\tau)}{\tau^{sp-p+1}} \dtau.
\end{align*}
Applying the integration by parts formula,
\begin{align}\label{int-1}
    - \int_{0}^{2r_j} \frac{\log(\tau)}{\tau^{sp-p+1}} \dtau & = -\frac{\tau^{p-sp}\log(\tau)}{p-sp}\Bigg|_0^{2r_j} + \int_{0}^{2r_j} \frac{\tau^{p-sp}}{p-sp} \frac{\dtau}{\tau} \no \\
    & =\left(-\frac{\tau^{p-sp}\log(\tau)}{p-sp} + \frac{\tau^{p-sp}}{(p-sp)^2} \right) \Bigg|_0^{2r_j}\leq\frac{r_j^{p-sp}}{p-sp} \left( -\log(r_j) + \frac{1}{p-sp} \right).
\end{align}
Hence, 
\begin{align*}
    I \leq \frac{\omega_{d-1}}{(sp-p)\log(2r_j)} \left( -\log(r_j) + \frac{1}{p-sp} \right) \le \omega_{d-1} \left( \frac{2}{p(1-s)} + \frac{1}{p^2 \log(2)(1-s)^2} \right).
\end{align*}
The upper bound comes from the fact that $$g(r):= \frac{1}{(sp-p)\log(2r)} \left( -\log(r) + \frac{1}{p-sp} \right)$$ is increasing over $[0, \frac{1}{4}]$ (even any $r<\frac{1}{2} -r_0$, with $r_0>0$ will work). Also, observe that $g(r) \ra +\infty$, as $r \ra \frac{1}{2}$, which dictates us to choose $r \le \frac{1}{4}$.  
Therefore, there exists $C=C(d,s,p)$ such that 
\begin{align*}
    & \frac{r_j^{sp}}{(-\log(2r_j))_+} \fint_{B_j} \int_{B_j} \K(|x-y|) \left(\max \{ \overline{w}_j(x) ,\overline{w}_j(y) \}\right)^p |\phi_j(x) - \phi_j(y)|^p \dxy \\
    & \le C 2^{jp} \fint_{B_j} w_j(y)^p \dy.
\end{align*}
Next, we estimate  
\begin{align*}
    &\frac{r_j^{sp}}{(-\log(2r_j))_+} \left( \underset{x \in \text{supp}(\phi_j)}{\esssup}\, \int_{\rd \setminus B_j} \left( \frac{1}{|x-y|^{d+sp}} + \Kp(|x-y|)\right) \overline{w}_j(y)^{p-1}\dy \right) \\
    & \quad \left( \fint_{B_j} \overline{w}_j(x) \phi_j(x)^{p} \dx \right)\\
    & \le C \frac{2^{j(d+sp)} r_j^{sp}}{(-\log(2r_j))_+} \int_{\rd \setminus B_j} \left( \frac{1}{|x_0-y|^{d+sp}} + \Kp(|x_0-y|)\right) \overline{w}_j(y)^{p-1} \dy \\
    & \quad \left( \fint_{B_j} \frac{w_j(x)^p}{(\overline{k}_j-k_j)^{p-1}} \dx \right) \\
    & \le C \frac{2^{j(d+sp+p-1)}}{(-\log(2r_j))_+} (\overline{k})^{1-p} \Tail_{+}(u_{+}; x_0, \frac{r}{2})^{p-1} \left( \fint_{B_j} w_j(x)^p\dx \right),
\end{align*}
where $C=C(d,s,p)$. In the above estimate, we use $r_j \ge \frac{r}{2}$, $\overline{w}_j \le \frac{w_j^p}{(\overline{k}_j - k_j)^{p-1}}$, and $ \abs{y-x_0} \le 2^{j+4} \abs{y-x}$ for $x\in B_j$ and $y\in \rd\setminus B_j$. Observe that
\begin{align}\label{inq-1}
    \underline{w}_j = (u-k+k-k_j)_-\le (u-k)_- + \overline{k}(1-2^{-j}).
\end{align}
Thus, using $\abs{y-x_0} \le 2^{j+4} \abs{y-x}$ for $|x-y|^{-d-sp}$ and \eqref{for log} for $(-\log(|x-y|))_-$, and then using \eqref{inq-1}, we similarly estimate
\begin{align*}
    &\frac{r_j^{sp}}{(-\log(2r_j))_+} \left( \underset{x \in \text{supp}(\phi_j)}{\esssup}\, \int_{\rd \setminus B_j} \Km(|x-y|) \underline{w}_j(y)^{p-1}\dy \right) \fint_{B_j} \overline{w}_j(x) \phi_j(x)^{p} \dx \\
    &\le \frac{r_j^{sp}}{(-\log(2r_j))_+} \left( \underset{x \in \text{supp}(\phi_j)}{\esssup}\, \int_{\rd \setminus B_j} \left(\Km(|x-y|) + \frac{1}{|x-y|^{d+sp}} \right) \underline{w}_j(y)^{p-1}\dy \right) \\
    & \quad \left( \fint_{B_j} \overline{w}_j(x) \phi_j(x)^{p} \dx \right) \\
    & \le C 2^{j(d+sp)} \frac{r_j^{sp}}{(-\log(2r_j))_+} \int_{\rd \setminus B_j} \left( \Km(|x_0-y|) + \frac{1}{|x_0-y|^{d+sp}}  \right) \underline{w}_j(y)^{p-1}\dy  \\
    & \quad \left( \fint_{B_j} \frac{w_j(x)^p}{(\overline{k}_j-k_j)^{p-1}} \dx \right) \\
    & \le C \frac{2^{j(d+sp+p-1)}}{\overline{k}^{p-1}} ((-\log(2r_j))_+)^{-1} \fint_{B_j} w_j(x)^p\dx \Bigg( \mathrm{Tail}_-((u-k)_-;x_0,\frac{r}{2}) ^{p-1} \\
    &\quad + \overline{k}^{p-1}\int_{\rd \setminus B_{\frac{r}{2}}} \left(\Km(|x_0-y|) + \frac{1}{|x_0-y|^{d+sp}} \right) \dy \Bigg) \\
    & \le C \frac{2^{j(d+sp+p-1)}}{\overline{k}^{p-1}} ((-\log(2r_j))_+)^{-1} \mathrm{Tail}_-((u-k)_-;x_0,\frac{r}{2}) ^{p-1}\fint_{B_j} w_j(x)^p\dx \\
    & \quad + C (1+\widetilde{C}(d,s,p) )2^{j(d+sp+p-1)} \fint_{B_j} w_j(x)^p\dx,
\end{align*}
where $C=C(d,s,p)$. In the last inequality, we have used Remark \ref{support-positive-kernel}. 

Next, we observe that 
$$\fint_{B_j}w_j^p \dx \leq \left(\fint_{B_j}w_j^{p\sig} \dx \right)^{\frac1\sig}.$$
Further, using $r_j \le 2r$, we see that $(-\log(2r_j))_+ \ge (-\log(4r))_+ = -\log(4r)$ as $ r \le \frac{1}{4}$, and moreover, 
\begin{align*}
    \frac{1-\log(\frac{r}{2})}{-\log(4r)} \le C, \; \forall \, r \in [0, \frac{1}{8}]. 
\end{align*}
Therefore, from \eqref{caccioppoli-2} and using the above estimates, for some $C=C(d,s,p)$, we obtain 
\begin{align*}
    &\left( \fint_{B_j} |\overline{w}_j\phi_j|^{p^*_s}\dx \right)^{\frac{p}{p^*_s}} 
    \\ & \le C \left(\fint_{B_j}w_j^{p\sig} \dx \right)^{\frac1\sig} \Bigg( \frac{r^{sp-d+\frac{d}{\sigma}}}{(1-\log(\frac{r}{2}))} \norm{f}_{L^q(B_j)}^{\frac{p}{p-\alpha}} \left(\frac{2^{j+2}}{\overline{k}}\right)^{p-\frac1\sig} + 2^{jp} \\
    &\quad + \frac{2^{j(d+sp+p-1)}}{\overline{k}^{p-1}(1-\log(\frac{r}{2}))} \left[ \mathrm{Tail}_+(u_+;x_0,\frac{r}{2})^{p-1} + \mathrm{Tail}_-((u-k)_-;x_0,\frac{r}{2})^{p-1} \right]
    \\ & \quad + \widetilde{C}(d,s,p) 2^{j(d+sp+p-1)} +2^{j(d+sp)}\Bigg) \\
    &\le C 2^{j(d+sp+p-1)} \left(\fint_{B_j}w_j^{p\sig} \dx \right)^{\frac1\sig} \Bigg( \frac{r^{sp-d+\frac{d}{\sigma}}}{(1-\log(\frac{r}{2}))} \norm{f}_{L^q(B_j)}^{\frac{p}{p-\alpha}} \left(\frac{1}{\overline{k}}\right)^{p-\frac1\sig} \\
    &\quad+\frac{1}{\overline{k}^{p-1}(1-\log(\frac{r}{2}))} \left[\mathrm{Tail}_+(u_+;x_0,\frac{r}{2})^{p-1} + \mathrm{Tail}_-((u-k)_-;x_0,\frac{r}{2})^{p-1}\right] + 1 \Bigg).
\end{align*}
Using the definition of $\sigma$, we observe that $sp-d+\frac{d}{\sigma} \ge 0$. For $\delta\in(0,1]$, we choose $\overline{k}$ such that  
\begin{align*}
    & \frac{1}{\overline{k}^{p-1}(1-\log(\frac{r}{2}))} \left[\mathrm{Tail}_+(u_+;x_0,\frac{r}{2})+ \mathrm{Tail}_-((u-k)_-;x_0,\frac{r}{2})\right] ^{p-1} \le \delta^{1-p} \\
    &\Longleftrightarrow \overline{k} \ge \delta \left(1-\log(\frac{r}{2})\right)^{-\frac{1}{p-1}} \left[ \mathrm{Tail}_+(u_+;x_0,\frac{r}{2}) + \mathrm{Tail}_-((u-k)_-;x_0,\frac{r}{2})\right],
\end{align*}
and 
\begin{align*}
    &\frac{r^{sp-d+\frac{d}{\sigma}}}{(1-\log(\frac{r}{2}))} \norm{f}_{L^q(B_j)}^{\frac{p}{p-\alpha}} \left(\frac{1}{\overline{k}}\right)^{p-\frac1\sig} \le \delta^{1-p} \\
    &\Longleftrightarrow \overline{k} \ge \delta^{\frac{p-\alpha}{p}} r^{\left( sp-d+\frac{d}{\sigma}\right)\frac{p-\alpha}{p(p-1)}}\norm{f}_{L^q(B_j)}^{\frac{1}{p-1}}(1-\log(\frac{r}{2}))^{-\frac{\sigma}{p\sigma-1}}.
\end{align*}
In view of the above inequalities, we take 
\begin{align*}
    \overline{k} & \ge \delta \left(1-\log(\frac{r}{2})\right)^{-\frac{1}{p-1}} \left[ \mathrm{Tail}_+(u_+;x_0,\frac{r}{2}) + \mathrm{Tail}_-((u-k)_-;x_0,\frac{r}{2}) \right] \\
    &\quad + \delta^{\frac{p-\alpha}{p}} \left(1-\log(\frac{r}{2})\right)^{-\frac{\sigma}{p\sigma-1}} r^{\left( sp-d+\frac{d}{\sigma}\right)\frac{p-\alpha}{p(p-1)}}\norm{f}_{L^q(B_j)}^{\frac{1}{p-1}}, 
\end{align*}
so that 
\begin{align}\label{caccioppoli-3}
    \left( \fint_{B_j} |\overline{w}_j\phi_j|^{p^*_s}\dx \right)^{\frac{p}{p^*_s}} \le C 2^{j(d+sp+p-1)} \delta^{1-p} \left(\fint_{B_j}w_j^{p\sig} \dx \right)^{\frac1\sig}.
\end{align}
We also have the following lower bound
\begin{align}\label{caccioppoli-4}
   \left( \fint_{B_j} |\overline{w}_j\phi_j|^{p^*_s}\dx \right)^{\frac{p}{p^*_s}} \ge (k_{j+1}-\overline{k}_j)^{\frac{p(p_s^*-p\sig)}{p_s^*}} \left(\fint_{B_{j+1}}w_{j+1}^{p\sig}\right)^{\frac{p}{p_s^*}}.
\end{align}
Denote $Y_j:=\displaystyle \left(\fint_{B_j}w_j^{p\sig}\right)^{\frac1{p\sig}}.$ Combining \eqref{caccioppoli-3} and \eqref{caccioppoli-4}, there exists $C=C(d,s,p)>1$ such that 
\begin{align*}
    \left(\frac{\overline{k}}{2^{j}}\right)^{\frac{p(p_s^*-p\sig)}{p_s^*}}Y_{j+1}^{\frac{p^2\sig}{p_s^*}}\leq C 2^{j(d+sp+p-1)} \delta^{1-p} Y_j^p,
\end{align*} 
which implies
\begin{align*}
    \frac{Y_{j+1}}{\overline{k}}\leq C \delta^{\frac{(1-p)p^*_s}{p^2\sigma}} \hat{C}^j\left(\frac{Y_j}{\overline{k}}\right)^{1+\beta},
\end{align*}
where using $p\sig <p_s^*$, we see that $\beta:=\frac{p^*_s}{p\sig}-1>0$ and $\hat{C} = 2^{\left( \frac{d+sp+p-1}{p} + \frac{p^*_s-p\sigma}{p^*_s} \right)\frac{p^*_s}{p \sigma}}>1$. Finally, we choose 
\begin{align*}
    \overline{k} & = \delta\left(1-\log(\frac{r}{2})\right)^{-\frac{1}{p-1}} \left[ \mathrm{Tail}_+(u_+;x_0,\frac{r}{2}) + \mathrm{Tail}_-((u-k)_-;x_0,\frac{r}{2}) \right] \\
    &\quad + \delta^{\frac{p-\alpha}{p}} \left(1-\log(\frac{r}{2})\right)^{-\frac{\sigma}{p\sigma-1}} r^{\left( sp-d+\frac{d}{\sigma}\right)\frac{p-\alpha}{p(p-1)}}\norm{f}_{L^q(B_j)}^{\frac{1}{p-1}}+ \delta^{\frac{(1-p)p_s^*}{p(p_s^*-p\sig)}}C^{\frac{1}{\be}}\widetilde{C}^{\frac{1}{\be^2}}Y_0,
\end{align*}
so that 
$$\frac{Y_0}{\overline{k}}\leq \delta^{\frac{(p-1)p_s^*}{p^2\sig\be}}C^{-\frac{1}{\be}} \hat{C}^{-\frac{1}{\be^2}} = \left( C \delta^{\frac{(1-p)p_s^*}{p^2\sig}} \right)^{-\frac{1}{\beta}} \hat{C}^{-\frac{1}{\be^2}}. $$
Now we take $c_0 = C\delta^{\frac{(1-p)p^*_s}{p^2\sig}}$  and $b=\hat{C}$ in Lemma \ref{iteration}.
Therefore, applying the iteration lemma (Lemma \ref{iteration}), we get $Y_j\to0\text{ as }j\to\infty.$ Hence 
\begin{align*}
    &\sup_{B_{\frac{r}{2}}(x_0)}(u-k)^+\leq \overline{k} \\
    &=\delta\left(1-\log(\frac{r}{2})\right)^{-\frac{1}{p-1}} \left[ \mathrm{Tail}_+(u_+;x_0,\frac{r}{2}) + \mathrm{Tail}_-((u-k)_-;x_0,\frac{r}{2}) \right]\\
    &\quad + \delta^{\frac{p-\alpha}{p}} \left(1-\log(\frac{r}{2})\right)^{-\frac{\sigma}{p\sigma-1}}  r^{\left( sp-d+\frac{d}{\sigma}\right)\frac{p-\alpha}{p(p-1)}}\norm{f}_{L^q(B_j)}^{\frac{1}{p-1}}+ \delta^{\frac{(1-p)p_s^*}{p(p_s^*-p\sig)}}C^{\frac{1}{\be}}\widetilde{C}^{\frac{1}{\be^2}}Y_0,
\end{align*}
Observe that $C^{\frac{1}{\be}}\hat{C}^{\frac{1}{\be^2}}=C(d,s,p)$. Now, choosing $k=0$ yields \eqref{loc.bounded}.
\end{proof}

\section{Logarithmic estimate and growth lemma}\label{sec:log lemma and growth}

In the following lemma, we obtain a logarithmic energy estimate for weak supersolution to \eqref{main_PDE}.

\begin{lemma}[Logarithmic estimate]\label{log-estimate} 
 Let $s \in (0,s_0), d>sp,$ and $f$ be as given in \eqref{weight}. We consider $B_R(x_0)\subset \Omega$ with $\text{diam}(B_R(x_0))\leq 1$. Assume that $u$ is a weak supersolution to \eqref{main_PDE} satisfying $u \geq 0$ in $B_R(x_0)$. Let $r>0$ be such that $B_r (x_0)\subset B_{\frac{R}{2}}(x_0)$ and $t>0$. Then there exists $C=C(d,p, s)$ such that the following holds:
\begin{align}\label{log-main}
&\iint_{B_r(x_0) \times B_r(x_0)} \K(|x-y|) \left|\log \left(\frac{u(x)+t}{u(y)+t}\right)\right|^p \dxy \no \\
&\leq C r^{d-sp}\Bigg(t^{1-p} \left(\frac{r}{R}\right)^{sp} \left[\operatorname{Tail}_+\left(u_{-} ; x_0, R\right)^{p-1}+\operatorname{Tail}_-\left(u_{+} ; x_0, R\right)^{p-1}\right] + t^{1-p}r^{sp-\frac{d}{q}}\norm{f}_{L^q(B_{R}(x_0))} \no \\
& \quad + 1 - \log(r)\Bigg).
\end{align}
\end{lemma}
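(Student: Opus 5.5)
We follow the Di Castro--Kuusi--Palatucci logarithmic lemma, with the kernel split via \eqref{split-2}. Fix a cut-off $\phi\in\cc(B_{3r/2}(x_0))$, $0\le\phi\le1$, $\phi\equiv1$ on $B_r$, $|\nabla\phi|\le C/r$. Since $\operatorname{diam}(B_R)\le 1$, we have $r\le R/2\le 1/4$. We test the supersolution inequality with $v=(u+t)^{1-p}\phi^p$, which is admissible by Remark~\ref{C_c} because $u+t\ge t>0$ on $B_R$. We then split
\[
\iint_{\rd\times\rd}=\iint_{B_{2r}\times B_{2r}}+2\iint_{(\rd\setminus B_{2r})\times B_{2r}} .
\]

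\textbf{Local part.} On $B_{2r}\times B_{2r}$ we have $|x-y|<4r\le 1$, so $\K\ge0$ there. The pointwise inequality from \cite[Lemma 1.3]{KuusiHarnack} then applies verbatim with the weight $\K$:
\[
|a-b|^{p-2}(a-b)\big(\phi(x)^p a^{1-p}-\phi(y)^pb^{1-p}\big)\le -c\,\Big|\log\tfrac ab\Big|^p\phi(y)^p+C|\phi(x)-\phi(y)|^p .
\]
This yields the left-hand side of \eqref{log-main}, up to a constant, minus the error
\[
C\iint_{B_{2r}\times B_{2r}}\K(|x-y|)\,|\phi(x)-\phi(y)|^p\,dx\,dy\le Cr^{-p}\int_{B_{2r}}\int_{B_{4r}(y)}\frac{B_{d,s,p}+p(-\log|x-y|)_+}{|x-y|^{d+sp-p}}\,dx\,dy .
\]
By the integration by parts computation \eqref{int-1}, this is at most $Cr^{d-sp}(1-\log r)$. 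This term is where the $1-\log r$ in \eqref{log-main} comes from.

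\textbf{Nonlocal part.} Only $x\in B_{3r/2}$ contributes, and for $y\notin B_{2r}$ we have $|x-y|\ge |y-x_0|/4$. We split $\K$ by \eqref{split-2}.
\begin{itemize}
\item For the nonnegative pieces $r^{-d-sp}$ and $\Kp$, argue as in \cite{KuusiHarnack}. For $y\in B_R$, $u(y)\ge0$, so the integrand is bounded above by $(u(x)+t)^{-1}\cdot(u(x)-u(y))_+^{p-1}\le C$. For $y\notin B_R$, bound it by $C\big(1+t^{1-p}u_-(y)^{p-1}\big)$. Integrating gives $Cr^{d-sp}$ plus $Cr^{d}t^{1-p}R^{-sp}\operatorname{Tail}_+(u_-;x_0,R)^{p-1}$. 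The factor $(-\log)_+$ of $\Kp$ on $B_R\setminus B_{2r}$ produces at most an extra $1-\log r$, and the comparison $(-\log|x-y|)_+\le(-\log|y-x_0|)_++\log4$ lets us pass to the tail centred at $x_0$.
\item The piece $-p\Km$ is supported on $|x-y|>1$, hence only on $y\notin B_R$, since $\operatorname{diam} B_R\le1$. Its sign is reversed, so the harmful contribution is where $u(y)>u(x)$: the integrand is $-\Km\,(u(y)-u(x))^{p-1}(u(x)+t)^{1-p}\phi^p$. Bound $(u(y)-u(x))_+^{p-1}\le u_+(y)^{p-1}$ since $u(x)\ge0$, and use $(u(x)+t)^{1-p}\le t^{1-p}$. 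The $y$-integral becomes $t^{1-p}R^{-sp}\operatorname{Tail}_-(u_+;x_0,R)^{p-1}$, again via \eqref{for log}.
\end{itemize}
This is exactly where the new pairing $\operatorname{Tail}_-(u_+)$ appears.

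\textbf{Right-hand side.} By H\"older,
\[
\Big|\int f\,(u+t)^{1-p}\phi^p\Big|\le t^{1-p}\|f\|_{L^q}|B_{2r}|^{1-1/q}=Ct^{1-p}r^{d-d/q}\|f\|_{L^q},
\]
which equals $Cr^{d-sp}\,t^{1-p}r^{sp-d/q}\|f\|_{L^q}$.

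Collecting all terms and multiplying through by $r^{d-sp}$ gives \eqref{log-main}. The main obstacle is the sign-changing far field. Without $u\ge0$ in $B_R$ and $\operatorname{diam}B_R\le1$, the $\Km$ part would act on points $y$ where $u(y)$ could be negative, or near $x$, where it cannot be controlled. One must also check carefully that the logarithmic weights in the nonlocal terms only cost a factor $1-\log r$, and not a worse one.
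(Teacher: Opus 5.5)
Your proposal is correct and follows essentially the same route as the paper. Both use the test function $(u+t)^{1-p}\phi^p$ and the Di Castro--Kuusi--Palatucci pointwise inequality on $B_{2r}\times B_{2r}$, where $\K\ge 0$. Both get the $1-\log r$ from \eqref{int-1}. Both split the kernel via \eqref{split-2} in the far field, where $\Km$ only acts on $y\notin B_R$ and is paired with $u_+$, while the nonnegative parts are paired with $u_-$.

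The only real deviation is the $f$-term. You bound it directly by H\"older using $0\le (u+t)^{1-p}\phi^p\le t^{1-p}$, which is simpler than the paper's passage through the Sobolev inequality for $\phi$ and gives the same $t^{1-p}r^{d-d/q}\|f\|_{L^q}$.

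Two harmless slips to fix: the factor should be $(u(x)+t)^{1-p}$, not $(u(x)+t)^{-1}$. And the harmful $\Km$-integrand is $+p\,\Km(|x-y|)\,(u(y)-u(x))^{p-1}(u(x)+t)^{1-p}\phi(x)^p$, not with a minus sign.
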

\begin{proof}
Consider a cut-off function $\phi \in \cc(B_{3r/2})$ satisfying $\phi =
1$ in $B_r, 0 \leq \phi \leq 1$ and $|\nabla \phi| \leq \frac{C}{r}$ in $B_{3r/2}$. Using $u\ge 0$ in $B_{3r/2}$ and $u \in W_{\text{loc}}^{s+\log}(B_{3r/2})$, we get $(u+t)^{1-p} \in W_{\text{loc}}^{s+\log}(B_{3r/2})$. From Remark \ref{C_c}, choosing $v=(u+t)^{1-p} \phi^p \in W_0^{s+\log}(B_{3r/2})$ as a nonnegative test function, we obtain 
\begin{align*}
    0 & \le \iint_{\rd \times \rd} \K(|x-y|) |u(x)-u(y)|^{p-2} (u(x)-u(y))(v(x)-v(y)) \dxy -\int_{B_{\frac{3r}{2}}} f(x)v(x) \dx \\
      & \le \iint_{\rd \times \rd} \K(|x-y|) |u(x)-u(y)|^{p-2} (u(x)-u(y))(v(x)-v(y)) \dxy + \int_{B_{\frac{3r}{2}}} \abs{f(x)} v(x) \dx. 
\end{align*}
By observing $\K(|x-y|) \ge 0$ for $x,y \in B_{2r}$, we use the same arguments as in the proof of \cite[Logarithmic Lemma 1.3]{Palatucci2016}, to get 
\begin{align}\label{log-0}
    &\iint_{B_{2r} \times B_{2r}} \K(|x-y|) |u(x)-u(y)|^{p-2} (u(x)-u(y))(v(x)-v(y)) \dxy \no \\
    & \le -\frac{1}{C} \iint_{B_{2r} \times B_{2r}} \K(|x-y|) \left|\log \left(\frac{u(x)+t}{u(y)+t}\right)\right|^p \phi(y)^p \dxy \no \\
    &\quad + C \iint_{B_{2r} \times B_{2r}} \K(|x-y|) |\phi(x)-\phi(y)|^p \dxy.
\end{align}
Now, using $\abs{\phi(x) - \phi(y)} \le C\frac{\abs{x-y}}{r}$ for $x,y \in B_{2r}$, and using \eqref{int-1} we obtain
\begin{align*}
    &\iint_{B_{2r} \times B_{2r}} \K(|x-y|) |\phi(x)-\phi(y)|^p \dxy \\
    &\le C(d,s,p) |B_{2r}| r^{-sp} \left( \frac{1}{p(1-s)} (1- \log(r)) + \frac{1}{p^2(1-s)^2} \right).
\end{align*}
Further, using $\abs{z}^{p-2}z_+ \le z_+^{p-1}$ for $z \in \rd$, 
\begin{align*}
    & \iint_{B_{R} \setminus B_{2r} \times B_{2r}} \K(|x-y|) |u(y)-u(x)|^{p-2} (u(y)-u(x)) v(y)\dxy \\
    & \le \iint_{B_{R} \setminus B_{2r} \times B_{2r}} \K(|x-y|) (u(y)-u(x))^{p-1}_+ \frac{\phi(y)^p}{(u(y)+t)^{p-1}} \dxy \\
    & \le \iint_{B_{R} \setminus B_{2r} \times B_{\frac{3r}{2}}} \K(|x-y|) \phi(y)^p \dxy,
\end{align*}
where we use the fact that $\K(|x-y|) \ge 0$ for $x,y \in B_{R}$ and $u \ge 0$ in $B_R$. Further, for $x \in B_R \setminus B_{2r}$ and $y \in B_{\frac{3r}{2}}$, using 
\begin{align}\label{bound-1}
    \frac{|x-x_0|}{|x-y|} \le 1 + \frac{|y-x_0|}{|y-x|} \le 1 + \frac{\frac{3r}{2}}{\frac{r}{2}} = 4,  
\end{align}
and the monotonicity property of $\log$ function, we estimate 
\begin{align}\label{log-1}
    &\iint_{B_{R} \setminus B_{2r} \times B_{\frac{3r}{2}}} \K(|x-y|) \phi(y)^p \dxy \no \\
    &\le C(d,s) |B_{\frac{3r}{2}}| \sup_{y \in B_{\frac{3r}{2}}} \int_{B_{R} \setminus B_{2r}} \frac{1 + (-\log(|x-y|))_+)}{|x-y|^{d+sp}}\dx \no \\
    & \le C(d,s,p) r^d \sup_{y \in B_{\frac{3r}{2}}} \int_{B_{R} \setminus B_{2r}} \frac{1 + (-\log(|x-x_0|))_+)}{|x-x_0|^{d+sp}}\dx \no \\
    &\le C(d,s,p) r^d (1-\log(r)) \sup_{y \in B_{\frac{3r}{2}}} \int_{\rd \setminus B_{2r}} \frac{\dx}{|x-x_0|^{d+sp}} = C(d,s,p) r^{d-sp} (1-\log(r)).
\end{align}
Next, we estimate $\K(|x-y|) (u(x)-u(y))(v(x)-v(y))$ over $\rd \setminus B_{R} \times B_{2r}$, where $\K(|x-y|)$ changes sign. 
Using $\abs{z}^{p-2}z_{\pm} \le z_{\pm}^{p-1}$ for $z \in \rd$, we write 
\begin{align*}
       & \iint_{\rd \setminus B_R \times B_{2r}} \K(|x-y|)|u(x)-u(y)|^{p-2}(u(x)-u(y))(v(x)-v(y)) \dxy\no \\
       & = \iint_{\rd \setminus B_R \times B_{2r}} \left(\frac{C_{d,s,p}B_{d,s,p}}{|x-y|^{d+sp}} + p\Kp(|x-y|) - p\Km(|x-y|) \right)\no\\
       & \quad \quad |u(x)-u(y)|^{p-2} (u(y)-u(x)) v(y) \dxy \no \\
       & \le C(d,s,p) \iint_{\rd \setminus B_R \times B_{2r}} \left(\frac{1}{|x-y|^{d+sp}} + \Kp(|x-y|) \right) (u(y)-u(x))^{p-1}_+ \frac{\phi(y)^p}{(u(y)+t)^{p-1}} \dxy \no \\
       & \quad +C(d,s,p)\iint_{\rd \setminus B_R \times B_{2r}} \left(\Km(|x-y|)\right) (u(y)-u(x))_-^{p-1} \frac{\phi(y)^p} {(u(y)+t)^{p-1}} \dxy := I_1 + I_2. 
   \end{align*} 
Further, using $(u(y)-u(x))_+ \le (u(y))_+ + (-u(x))_+= u(y) + (u(x))_-$,
\begin{align*}
    &I_1\le C(d,s,p) \iint_{\rd \setminus B_R \times B_{2r}} \left(\frac{1}{|x-y|^{d+sp}} + \Kp(|x-y|) \right) \phi(y)^p \dxy \\
    &\quad+ C(d,s,p) t^{1-p} \iint_{\rd \setminus B_R \times B_{2r}} \left(\frac{1}{|x-y|^{d+sp}} + \Kp(|x-y|) \right) (u(x)_-)^{p-1} \phi(y)^p\dxy := I_{1,1}+I_{1,2},
\end{align*}
where from the similar estimates as in \eqref{log-1},  
\begin{align*}
    I_{1,1} \le C(d,s,p) \iint_{\rd \setminus B_{r} \times B_{2r}} \frac{1 + (-\log(|x-y|))_+)}{|x-y|^{d+sp}} \phi(y)^p \dxy \le  C(d,s,p) r^{d-sp} (1-\log(r)).
\end{align*}
Moreover, using a similar estimate as in \eqref{bound-1}, 
\begin{align*}
    I_{1,2} & \le C(d,s,p) t^{1-p} r^{d} \int_{\rd \setminus B_R} \left(\frac{1}{|x-x_0|^{d+sp}}+\Kp(|x-x_0|) \right) (u(x)_-)^{p-1} \dx \\
    & \le C(d,s,p) t^{1-p} \frac{r^{d}}{R^{sp}} \mathrm{Tail}_{+}(u_-;x_0,R)^{p-1}.
\end{align*}
Similarly, using the fact that $(u(y)-u(x))_- \le (u(y))_- + (-u(x))_- = (u(x))_+$, 
\begin{align*}
    I_2 & \le C(d,s,p) t^{1-p}\iint_{\rd \setminus B_R \times B_{2r}} \frac{(-\log(|x-y|))_-}{|x-y|^{d+sp}} (u(x)_+)^{p-1} \dxy.
\end{align*}
Further for $x \in \rd \setminus B_R $ and $y \in B_{\frac{3r}{2}}(x_0)$,
\begin{align*}
    \frac{|x-y|}{|x-x_0|} \le 1 + \frac{|y-x_0|}{|x-x_0|} \le 1 + \frac{\frac{3r}{2}}{2r} = \frac{7}{4},  
\end{align*}
which implies 
\begin{align}\label{bound-2}
    (-\log(|x-y|))_- \le \log \frac{7}{4} + (-\log(|x-x_0|))_-.
\end{align}
Hence, using \eqref{bound-1} for $|x-y|^{-d-2s}$ and \eqref{bound-2} for $(-\log(|x-y|))_-$, we get 
\begin{align*}
    I_2 & \le  C(d,s,p) t^{1-p}\iint_{\rd \setminus B_R \times B_{2r}} \left( \frac{1}{|x-y|^{d+sp}}+\Km(|x-y|) \right) (u(x)_+)^{p-1} \dxy \\
    &\le C(d,s,p) t^{1-p} r^{d} \int_{\rd \setminus B_R} \left( \frac{1}{|x-x_0|^{d+sp}}+\Km(|x-x_0|) \right) (u(x)_+)^{p-1} \dx \\
    & \le C(d,s,p) t^{1-p} \frac{r^{d}}{R^{sp}} \mathrm{Tail}_{-}(u_+;x_0,R)^{p-1}.
\end{align*}
Therefore,
\begin{align}\label{log-4}
    & \iint_{\rd \setminus B_R \times B_{2r}} \K(|x-y|)|u(x)-u(y)|^{p-2}(u(x)-u(y))(v(x)-v(y)) \dxy \no \\
    & \le C(d,s,p) \left( r^{d-sp} (1-\log(r)) +  t^{1-p} \frac{r^{d}}{R^{sp}} \left(\mathrm{Tail}_{+}(u_-;x_0,R)^{p-1} + \mathrm{Tail}_{-}(u_+;x_0,R)^{p-1} \right) \right). 
\end{align}
Finally,  we estimate 
\begin{align*}
    \int_{B_{\frac{3 r}{2}}} |f(x)| \frac{\phi(x)^p}{(u(x)+t)^{p-1}} \dx& \leq t^{1-p}\int_{B_{\frac{3 r}{2}}}|f(x)| \phi(x)^p \dx  \leq t^{1-p}\left\|f\right\|_{L^{q}(B_{R})} |B_{\frac{3r}{2}}|^{\frac{sp}{d}-\frac1q} \left\| \phi \right\|_{L^{p_s^*}(B_{\frac{3r}{2}})}^p.
\end{align*}
Now we use the fractional Poincar\'e inequality, and the fact that $\abs{\phi(x)- \phi(y)}\le \frac{C}{r}|x-y|$ for every $x,y\in B_{2r}$,  
\begin{align*}
    \left( \int_{B_{2r}} \abs{\phi(x)}^{p^*_s}\dx \right)^{\frac{p}{p^*_s}} \le C \iint_{B_{2r} \times B_{2r}} \frac{|\phi(x)- \phi(y)|^p}{\abs{x-y}^{d+sp}} \dxy  \le Cr^{d-p} \frac{r^{p-sp}}{p-sp} \le C r^{d-sp} ,
\end{align*}
for some $C=C(d,s,p)$.
Hence, 
\begin{align*}
    \int_{B_{\frac{3 r}{2}}} |f(x)| \frac{\phi(x)^p}{(u(x)+t)^{p-1}} \dx \le C(d,s,p)  t^{1-p} r^{d-sp}  |B_{\frac{3r}{2}}|^{\frac{sp}{d}-\frac1q} \left\|f\right\|_{L^{q}(B_{\frac{3r}{2}})}.
\end{align*}
Combining \eqref{log-0}, \eqref{log-1}, \eqref{log-4}, and the above estimate, we get \eqref{log-main}.
\end{proof}

The following proposition proves a growth lemma for a weak supersolution to \eqref{main_PDE}.

\begin{proposition}[Growth lemma]\label{growth-lemma}
Let $s \in (0,s_0), d>sp,$ and $f$ be as given in \eqref{weight}. We consider $B_R(x_0)\subset \Omega$ with $\text{diam}(B_R(x_0))\leq 1$. Assume that $u$ is a weak supersolution to \eqref{main_PDE} satisfying $u \geq 0$ in $B_R(x_0) \subset \Omega$. Let $0<r\leq \frac{1}{24}$ be such that $B_r(x_0) \subset B_{\frac R{16}}(x_0)$. Assume $k \geq 0$ and there exists $\tau \in(0,1]$ such that
\begin{align}{\label{expan1}}
    \left|B_r(x_0) \cap\{u \geq k\}\right| \geq \tau\left|B_r(x_0\right)|.
\end{align} 
Then 
\begin{align}{\label{expan1.1}}
 \underset{B_{4 r}\left(x_0\right)}{{\essinf}}\, u & \geq \delta k-(1-\log r)^{-\frac{1}{p-1}}\Bigg(\left(\frac{r}{R}\right)^{\frac{sp}{p-1}} \left[\operatorname{Tail}_+\left(u_{-} ; x_0, R\right)+\operatorname{Tail}_-\left(u_{+} ; x_0, R\right)\right] \no \\
& \quad + r^{\frac{qsp-d}{q(p-1)}} \norm{f}_{L^q(B_R(x_0))}^{\frac{1}{p-1}} \Bigg),
\end{align}
where $\delta=\delta(d, s,p, \tau) \in \left(0, \frac{1}{4}\right)$.
\end{proposition}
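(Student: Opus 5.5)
We follow the Di Castro--Kuusi--Palatucci scheme: the logarithmic estimate gives a measure-density bound, and a De Giorgi iteration for $w_-$ upgrades it to a pointwise lower bound. Denote by
\[
\mathcal{T}:=(1-\log r)^{-\frac{1}{p-1}}\Big(\big(\tfrac rR\big)^{\frac{sp}{p-1}}\big[\operatorname{Tail}_+(u_-;x_0,R)+\operatorname{Tail}_-(u_+;x_0,R)\big]+r^{\frac{qsp-d}{q(p-1)}}\|f\|_{L^q(B_R)}^{\frac1{p-1}}\Big)
\]
the error term in \eqref{expan1.1}. If $\delta k\le \mathcal{T}$ there is nothing to prove, so we assume $\delta k>\mathcal{T}$.

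\textbf{Step 1 (measure shrinking on $B_{6r}$).} We apply Lemma~\ref{log-estimate} on the ball $B_{6r}\subset B_{R/2}$, where $6r\le 1/4$. We take $t:=\varepsilon k + \mathcal{T}$-type level, chosen so that the factor $t^{1-p}$ times the tail and $f$ terms is bounded by $C(1-\log r)$. This is exactly why $\mathcal{T}$ carries the prefactor $(1-\log r)^{-1/(p-1)}$. The right-hand side of \eqref{log-main} is then bounded by $C r^{d-sp}(1-\log r)$.

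Next we test with the truncation $v=\min\{(\log\frac{k+t}{u+t})_+,\log\frac1{3\varepsilon}\}$, which is a Lipschitz image of the logarithm. Since $\K(|x-y|)\ge p\Kp(|x-y|)$ on $B_{6r}\times B_{6r}$, Proposition~\ref{poincare} applies. It gives
\[
\fint_{B_{6r}}|v-(v)_{B_{6r}}|^p\le C\frac{r^{sp-d}}{-\log(12r)}\iint \K|v(x)-v(y)|^p\le C\,\frac{1-\log r}{-\log(12r)}\le C,
\]
because the ratio $g(r)$ is bounded for $r\le 1/24$. Hence the logarithmic growth cancels exactly, which is the key new point.

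The hypothesis \eqref{expan1} gives $|\{v=0\}\cap B_{6r}|\ge 6^{-d}\tau|B_{6r}|$. The standard DKP argument then yields
\[
\frac{|B_{6r}\cap\{u\le 2\varepsilon k-t\}|}{|B_{6r}|}\le \frac{C(d,s,p)}{\tau\log\frac1{3\varepsilon}},
\]
which is as small as we like once $\varepsilon$ is chosen small.

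\textbf{Step 2 (De Giorgi iteration for $w_-$).} We set $r_j=4r(1+2^{-j})$, $k_j=\varepsilon k(1+2^{-j})$, $w_j=(u-k_j)_-$ and $A_j=|B_{r_j}\cap\{u<k_j\}|/|B_{r_j}|$. We use Lemma~\ref{energy estimate} for supersolutions, with $w_+$ replaced by $w_-$. The tail terms then appear with the pairing $\Km(w_+)$ and $(1+\Kp)(w_-)$.

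Because $u\ge0$ in $B_R$, we have $w_-\le k_j$ on $B_R$. The far part is controlled by $\operatorname{Tail}_+(u_-;x_0,R)+\operatorname{Tail}_-(u_+;x_0,R)$, plus intermediate contributions on $B_R\setminus B_{r_j}$ of size $k^{p-1}r^{-sp}(1-\log r)$. The term containing $\int\Km$ is bounded by $\widetilde C$ through Remark~\ref{support-positive-kernel}. The $f$-term is handled by Hölder's inequality, using $q>d/sp$.

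Combining this with Proposition~\ref{poincare} and the boundedness of $g$, and using the smallness assumption $\mathcal T\le\delta k$, we obtain the recursion
\[
A_{j+1}\le C b^j A_j^{1+\frac{sp}{d}}.
\]
Lemma~\ref{iteration}, with $A_0$ small from Step 1, gives $A_j\to0$, so $u\ge \varepsilon k$ on $B_{4r}$. Finally we take $\delta=\varepsilon$, adjusted to lie in $(0,1/4)$.

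\textbf{Main obstacle.} The delicate step is the second. We must verify that every logarithmic factor in the Caccioppoli terms is of order $(1-\log r)$ and is absorbed by the $1/(-\log(2r_j))$ gain from Proposition~\ref{poincare}. We must also handle the tail pieces with the sign-changing kernel so that only the stated tail pairing survives. For the latter we would first split $\rd\setminus B_{r_j}$ into $B_R\setminus B_{r_j}$, where $u\ge0$, and $\rd\setminus B_R$, and then apply the comparisons \eqref{bound-1} and \eqref{bound-2} as in Lemma~\ref{log-estimate}.
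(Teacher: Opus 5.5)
Your proposal follows the same route as the paper. You apply the logarithmic estimate on $B_{6r}$ with $t$ of the size of the error term. You then use the truncated logarithm, Proposition~\ref{poincare} and the boundedness of $(1-\log r)/(-\log(12r))$ on $(0,\frac1{24}]$ to get the measure-shrinking bound. Finally you run a De Giorgi iteration for $(k_j-u)_+$ from the supersolution Caccioppoli inequality, using the reduction $\mathcal T<\delta k$ to absorb the tail pairing, the annulus contributions and the $f$-term into $k_j^{p-1}r^{-sp}(1-\log r)$.

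As written, however, your two steps do not connect. Step~1 controls the density of $\{u\le 2\varepsilon k-t\}$ in $B_{6r}$. Your $A_0$ is the density of $\{u<2\varepsilon k\}$ in $B_{8r}$, because $r_0=8r$ and $k_0=2\varepsilon k$. Both the radius and the level are too large. Since $t>0$, the set $\{u<k_0\}$ is not contained in the set estimated in Step~1; with $t=\varepsilon k+\mathcal T$ the controlled level is only $\varepsilon k-\mathcal T$.

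The paper avoids this with three choices:
\begin{itemize}
\item $t=\frac12\mathcal T+\frac\eta2$ with arbitrary $\eta>0$, with no multiple of $k$ in $t$; the bound $t^{1-p}(\cdots)\le C(1-\log r)$ only needs $t\gtrsim\mathcal T$;
\item radii $\rho_j=4r+2^{1-j}r$, so that $\rho_0=6r$;
\item levels $k_j=\delta k(1+2^{-j-1})$.
\end{itemize}
Under the reduction $\delta k\ge\mathcal T+\eta$ one gets $k_0=\frac32\delta k\le 2\delta k-t$. So $A_0$ is exactly what Step~1 bounds, and one lets $\eta\to0$ at the end.

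Also, the recursion exponent $1+\frac{sp}{d}$ is not what the argument produces. The Sobolev lower bound puts $A_{j+1}^{p/p^*_s}=A_{j+1}^{(d-sp)/d}$ on the left. After H\"older, the $f$-term contributes only $A_j^{(q-1)/q}$ on the right, and the other terms give $A_j\le A_j^{(q-1)/q}$. You therefore get $A_{j+1}\le Cb^jA_j^{\frac{d(q-1)}{q(d-sp)}}$. This exponent exceeds $1$ precisely because $q>\frac d{sp}$, and it tends to $1$ as $q\downarrow\frac d{sp}$, so it can be smaller than $1+\frac{sp}{d}$. Since Lemma~\ref{iteration} only needs some positive $\beta$, the conclusion is unaffected once the exponent is corrected.
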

\begin{proof}
Our proof includes two steps.\\
\textbf{Step 1.} Let $\var>0$. Under the assumption in \eqref{expan1}, we claim that there exists  ${C_1}={C_1}(d, p, s)$ such that
\begin{align}\label{expan2}
& \Bigg|B_{6 r} \cap \Bigg\{ u \leq 2 \delta k - (1-\log(r))^{-\frac{1}{p-1}} \Bigg(\frac{1}{2} \left(\frac{r}{R}\right)^{\frac{sp}{p-1}} \left[\operatorname{Tail}_+\left(u_{-} ; x_0, R\right)+\operatorname{Tail}_-\left(u_{+} ; x_0, R\right)\right] \no \\ 
& \quad +\frac{1}{2} r^{\frac{qsp-d}{q(p-1)}} \norm{f}_{L^q(B_R)}^{\frac{1}{p-1}}\Bigg)-\frac{\var}{2}  \Bigg\}\Bigg| \leq \frac{{C_1}}{\tau \log \frac{1}{2 \delta}}\left|B_{6r}\right|,
\end{align}
for every $\delta \in\left(0, \frac{1}{4}\right)$.
Consider a  cut-off function $\phi \in \C_c^{\infty}(B_{7 r})$ such that $0 \leq \phi \leq 1$ in $B_{7r}, \phi=1$ in $B_{6 r}$ and $|\nabla \phi| \leq \frac{8}{r}$ in $B_{7 r}$. We define $t_{\var}$ as follows:
\begin{align*}
t_\var & :=\frac{1}{2}(1-\log(r))^{-\frac{1}{p-1}}\Bigg(\left(\frac{r}{R}\right)^{\frac{sp}{p-1}} \left[\operatorname{Tail}_+\left(u_{-} ; x_0, R\right)+\operatorname{Tail}_-\left(u_{+} ; x_0, R\right)\right]\\
& \quad + r^{\frac{qsp-d}{q(p-1)}}\norm{f}_{L^q(B_R)}^{\frac{1}{p-1}}\Bigg)+\frac{\var}{2}. 
\end{align*}
The choice of $t_{\var}$ implies
\begin{align*}
   &t_{\varepsilon}^{1-p} \left( \left(\frac{r}{R}\right)^{sp} \left[\operatorname{Tail}_+\left(u_{-} ; x_0, R\right)^{p-1}+\operatorname{Tail}_-\left(u_{+} ; x_0, R\right)^{p-1}\right] + r^{\frac{qsp-d}{q}}\norm{f}_{L^q(B_{\frac{3r}{2}})} \right) \\
   &\quad \le C(1-\log(r)). 
\end{align*}
 Next, we define $v:=u+t_\var$. Since $u$ is a weak supersolution, we choose $v^{1-p} \phi^p$ as a test function and using Lemma \ref{log-estimate} to obtain
\begin{align}\label{expan3}
&\iint_{B_{6r} \times B_{6r}} \K(|x-y|) \left|\log \left(\frac{u(x)+t_\var}{u(y)+t_\var}\right)\right|^p \dxy \no \\
&\leq C r^{d-sp}\Bigg(t_{\varepsilon}^{1-p} \left(\frac{r}{R}\right)^{sp} \left[\operatorname{Tail}_+\left(u_{-} ; x_0, R\right)^{p-1}+\operatorname{Tail}_-\left(u_{+} ; x_0, R\right)^{p-1}\right] + t_{\varepsilon}^{1-p}r^{\frac{qsp-d}{q}}\norm{f}_{L^q(B_{\frac{3r}{2}})} \no \\
& \quad + 1 - \log(r)\Bigg)\no\\ & \leq C r^{d-sp}(1 - \log(r)),
\end{align}
where $C=C(d,s,p)$. For $\delta \in\left(0, \frac{1}{4}\right)$, we denote
$$
w=\left(\min \left\{\log \frac{1}{2 \delta}, \log \frac{k+t_\var}{v}\right\}\right)_{+} .
$$
Note that the kernel is positive inside $B_{6r}\times B_{6r}$. Since $w$ is a truncation of $\log(k+t_\var)-\log (v)$, the energy decreases and by \eqref{expan3}, we have
\begin{align}{\label{expan4}}
&\iint_{B_{6r} \times B_{6r}} \K(|x-y|) |w(x)-w(y)|^p \dxy \no\\ & \leq\iint_{B_{6r} \times B_{6r}} \K(|x-y|) \left|\log \left(\frac{u(x)+t_\var}{u(y)+t_\var}\right)\right|^p \dxy \leq C r^{d-sp}(1 - \log(r)).
\end{align}
Note that $0< |x-y| \leq 12r\leq \frac{1}{2}$ implies $\K(|x-y|)\geq\Kp(|x-y|)$ in $B_{6r}\times B_{6r}$. From \eqref{expan4}, H\"older's inequality and Poincaré inequality (see Proposition \ref{poincare}), we obtain
\begin{align}{\label{expan5}}
&\int_{B_{6r}}\left|w-(w)_{B_{6r}}\right| \dx \leq Cr^{\frac {d}{p^\prime}}\left(\int_{B_{6r}}\left|w-(w)_{B_{6r}}\right|^p \dx\right)^{\frac{1}{p}} \no\\&\leq C\frac{r^{s+\frac{d}{p^{\prime}}}}{(-\log(12r))^{\frac{1}{p}}}\left(\iint_{B_{6r} \times B_{6r}} \Kp(|x-y|) |w(x)-w(y)|^p \dxy \right)^{\frac{1}{p}}\no\\&\leq C\frac{r^{s+\frac{d}{p^{\prime}}}}{(-\log(12r))^{\frac{1}{p}}}\left(\iint_{B_{6r} \times B_{6r}} \K(|x-y|) |w(x)-w(y)|^p \dxy \right)^{\frac{1}{p}} \no\\&\leq C r^{d} \left(\frac{1 - \log(r)}{(-\log(12r))}\right)^{\frac{1}{p}} \leq C|B_{6r}|.
\end{align}
The last inequality follows from the boundedness of $g(r):= (1 - \log(r))(-\log(12r))^{-1}$ over $(0,\frac{1}{24}]$. Indeed, $g(r) \ra 1$ as $ r \ra 0$ and $g(r) \ra \infty$ as $r \ra \frac{1}{12}$. The blow up limit near $\frac{1}{12}$ dictates us to choose $r \le \frac{1}{24}$ (even any $r<\frac{1}{12} -r_0$, with $r_0>0$ will work).

Observe that $\{w=0\}=\left\{v \geq k+t_\var\right\}=\{u \geq k\}$. By the assumption \eqref{expan1}, it follows that
\begin{align}{\label{expan6}}
    \left|B_{6 r} \cap\{w=0\}\right| \geq \frac{\tau}{6^d}\left|B_{6r}\right|.
\end{align}
Following the proof of \cite[Lemma 3.1]{KuusiHarnack} and using \eqref{expan6}, we obtain
\begin{align}{\label{expan7}}
\log \frac{1}{2 \delta} & =\frac{1}{\left|B_{6 r} \cap\{w=0\}\right|} \int_{B_{6 r} \cap\{w=0\}}\left(\log \frac{1}{2 \delta}-w(x)\right) \dx  \leq \frac{6^d}{\tau}\left(\log \frac{1}{2 \delta}-(w)_{B_{6 r}}\right).
\end{align}
Now integrating \eqref{expan7} over the set $B_{6 r} \cap\left\{w=\log \frac{1}{2 \delta}\right\}$ and using \eqref{expan5}, there exists ${C_1}={C_1}(d, p, s)$ such that
$$
\left|\left\{w=\log \frac{1}{2 \delta}\right\} \cap B_{6 r}\right| \log \frac{1}{2 \delta} \leq \frac{6^d}{\tau} \int_{B_{6 r}}\left|w-(w)_{B_{6 r}}\right| \dx \leq \frac{{C_1}}{\tau}\left|B_{6 r}\right| .
$$
Hence, for any $\delta \in\left(0, \frac{1}{4}\right)$, we have
$$
\left|B_{6 r} \cap\left\{v \leq 2 \delta\left(k+t_\var\right)\right\}\right| \leq \frac{{C_1}}{\tau} \frac{1}{\log \frac{1}{2 \delta}}\left|B_{6r}\right| .
$$
This implies
$$
\left|B_{6 r} \cap\left\{u \leq 2 \delta k - {t_\var}\right\}\right| \leq \frac{{C_1}}{\tau} \frac{1}{\log \frac{1}{2 \delta}}\left|B_{6r}\right| .
$$
Thus, \eqref{expan2} holds.
\smallskip\\\textbf{Step 2.} We claim that, for every $\var>0$, there exists a constant $\delta=\delta(d, p, s,  \tau) \in\left(0, \frac{1}{4}\right)$ such that
\begin{align}{\label{expan8}}
& \underset{B_{4 r}}{\essinf}\, u \geq \delta k-(1-\log r)^{-\frac{1}{p-1}}\Bigg(\left(\frac{r}{R}\right)^{\frac{sp}{p-1}} \left[\operatorname{Tail}_+\left(u_{-} ; x_0, R\right)+\operatorname{Tail}_-\left(u_{+} ; x_0, R\right)\right] \no \\
& \quad + r^{\frac{qsp-d}{q(p-1)}} \norm{f}_{L^q(B_R)}^{\frac{1}{p-1}}\Bigg) - \var.
\end{align}
As a consequence of \eqref{expan8}, the estimate \eqref{expan1.1} follows.
To prove \eqref{expan8}, without loss of generality, we may assume that
\begin{align}{\label{expan9}}
&\delta k \geq(1-\log r)^{-\frac{1}{p-1}}\Bigg(\left(\frac{r}{R}\right)^{\frac{sp}{p-1}} \left[\operatorname{Tail}_+\left(u_{-} ; x_0, R\right)+\operatorname{Tail}_-\left(u_{+} ; x_0, R\right)\right] \no \\
& \quad + r^{\frac{qsp-d}{q(p-1)}} \norm{f}_{L^q(B_R)}^{\frac{1}{p-1}}\Bigg) + \var.
\end{align}
Otherwise \eqref{expan8} holds true, since $u \geq 0$ in $B_R$.
Let $\rho \in[r, 6 r]$ and $\phi \in \C_c^{\infty}(B_\rho(x_0))$ be a cut-off function such that $0 \leq \phi \leq 1$ in $B_\rho(x_0)$. For any $l \in(\delta k, 2 \delta k)$, we denote $w_{\pm}=(l-u)_{\pm}$. We estimate 
\begin{align}\label{f}
    \int_{B_{\rho}} |f(x)| w_+(x) \phi(x)^{p} \dx = \int_{B_{\rho} \cap \{u<l\}} |f(x)| w_+(x) \phi(x)^{p} \dx & \le l \int_{B_{\rho} \cap \{u<l\}} |f(x)| \dx \no \\
    &\le l \norm{f}_{L^q(B_{\rho})} |B_{\rho} \cap \{u<l\}|^{\frac{q-1}{q}}. 
\end{align}
To start the iteration process, for $j=0,1,2, \ldots$, we define
\begin{align}\label{notation-2}
l=k_j:=\delta k+2^{-j-1} \delta k, \; \rho=\rho_j:=4 r+2^{1-j} r, \text{ and } \hat{\rho}_j:=\frac{\rho_j+\rho_{j+1}}{2}.
\end{align}
Then $l \in(\delta k, 2 \delta k), \rho_j, \hat{\rho_j} \in(4 r, 6 r)$ and
\begin{align}{\label{obser}}
k_j-k_{j+1}=2^{-j-2} \delta k \geq 2^{-j-3} k_j.
\end{align}
Set $B_j=B_{\rho_j}\left(x_0\right), \hat{B}_j=B_{\hat{\rho}_j}\left(x_0\right)$. Denote $\overline{w}_j = \left(k_j-u\right)_{+}$ and $\underline{w}_j = \left(k_j-u\right)_{-}$.
We observe that
\begin{align*}
\overline{w}_j \geq 2^{-j-3} k_j \chi_{\left\{u<k_{j+1}\right\}}.
\end{align*}
We consider a sequence of cut-off functions $\left(\phi_j\right)_{j=0}^{\infty} \subset \C_c^{\infty}(\hat{B}_j)$ such that $0 \leq \phi_j \leq 1$ in $\hat{B}_j, \phi_j=1$ in $B_{j+1}$ and $\left|\nabla \phi_j\right| \leq \frac{2^{j+3}}{r}$. From the Caccioppoli inequality (Lemma \ref{energy estimate}) and \eqref{f}, for $C=C(d,p,s)$, we have
 \begin{align}\label{expan101}
     & \frac{\rho_j^{sp-d}}{-\log(2 \rho_j)} \iint_{B_j \times B_j} \K(|x-y|) |\overline{w}_j(x)\phi_j(x) - \overline{w}_j(y)\phi_j(y)|^p \dxy \no \\
     & \le C  k_j \frac{\rho_j^{sp-d}}{-\log(2 \rho_j)} \norm{f}_{L^q(B_j)} \left|B_j \cap \{u<k_j\}\right|^{\frac{q-1}{q}} \no \\
     &\quad +  C \frac{\rho_j^{sp-d}}{-\log(2 \rho_j)} \iint_{B_j \times B_j} \K(|x-y|) \left(\max \{ \overline{w}_j(x) , \overline{w}_j(y) \}\right)^p |\phi_j(x) - \phi_j(y)|^p \dxy \no \\
     & \quad + C \frac{\rho_j^{sp}}{-\log(2 \rho_j)} \left( \underset{x \in \text{supp}(\phi_j)}{\esssup}\, \int_{\rd \setminus B_j}   
     \left( \frac{1}{|x-y|^{d+sp}} + \Kp(|x-y|)\right) \overline{w}_j(y)^{p-1}
     \dy \right) \no \\
     &\quad \fint_{B_j} \overline{w}_j(x)\phi_j(x)^{p} \dx \no \\
     & \quad + C \frac{\rho_j^{sp}}{-\log(2 \rho_j)} \left( \underset{x \in \text{supp}(\phi_j)}{\esssup}\, \int_{\rd \setminus B_j}  \Km(|x-y|) \underline{w}_j(y)^{p-1}\dy \right) \fint_{B_{j}} \overline{w}_j(x)\phi_j(x)^{p} \dx \no \\
     & \quad + C \rho_j^{sp}\left( \underset{x \in \text{supp}(\phi_j)}{\esssup}\, \int_{\rd \setminus B_j} \Km(|x-y|) \dy \right) \fint_{B_j} \overline{w}_j(x)^p\phi_j(x)^{p} \dx \no \\
     &=:J_1+J_2+J_3+J_4+J_5.
   \end{align}
We estimate each term present on the right-hand side of \eqref{expan101}. For $J_1$, using $\rho_j \ge r$, we write 
\begin{align*}
      \rho_j^{-d} \norm{f}_{L^q(B_j)} \left|B_j \cap \{u<k_j\}\right|^{\frac{q-1}{q}} & =  \norm{f}_{L^q(B_j)} \rho_j^{-\frac{d}{q}} \left(\frac{\left|B_j \cap \{u<k_j\}\right|}{|B_j|}\right)^{\frac{q-1}{q}} \\
     & \le  \norm{f}_{L^q(B_R)} r^{-\frac{d}{q}} \left(\frac{\left|B_j \cap \{u<k_j\}\right|}{|B_j|}\right)^{\frac{q-1}{q}}.
\end{align*}
From the same estimate as in the proof of Proposition \ref{local-boundedness-I}, we get 
\begin{align*}
    J_2\le C(d,s,p) 2^{jp} \fint_{B_j} \overline{w}_j(x)^p \dx. 
\end{align*}
Further,
\begin{align*}
    J_3 & \le C 2^{j(d+sp)} \rho_j^{sp} \left(\int_{\rd \setminus B_j}   
     \left( \frac{1}{|x_0-y|^{d+sp}} + \Kp(|x_0-y|)\right) \overline{w}_j(y)^{p-1}
     \dy \right) \fint_{B_j} \overline{w}_j(x)\phi_j(x)^{p} \dx \\
     & \le C 2^{j(d+sp)} \rho_j^{sp} \Bigg( r^{-sp} \left(\frac{r}{R}\right)^{sp} \operatorname{Tail}_+\left(u_{-} ; x_0, R\right)^{p-1} + k_j^{p-1} \rho_j^{-sp} \\
     & \quad +  k_j^{p-1} \int_{B_1 \setminus B_j} \frac{(-\log(|x_0-y|))_+}{|x_0 -y|^{d+sp}} \dy \Bigg) \fint_{B_j} \overline{w}_j(x) \dx,
\end{align*}
where we use $(-\log(|x_0-y|))_+ = 0$ for $|x_0-y| \ge 1$. Further, using $\rho_j \ge r$, 
\begin{align*}
    \int_{B_1 \setminus B_j} \frac{(-\log(|x_0-y|))_+}{|x_0 -y|^{d+sp}} \dy \le (-\log \rho_j) \int_{\rd \setminus B_j} \frac{1}{|x_0 -y|^{d+sp}} \dy =  (-\log \rho_j) \frac{\rho_j^{-sp}}{sp} \le (-\log \rho_j) \frac{r^{-sp}}{sp}. 
\end{align*}
Therefore, using the definition of $k_j$ in \eqref{notation-2}, and using $r\le \rho_j \le 6r$,
\begin{align*}
    J_3 & \le  C 2^{j(d+sp)} \frac{\rho_j^{sp}}{-\log(2 \rho_j)} r^{-sp} \left(  \left(\frac{r}{R}\right)^{sp} \operatorname{Tail}_+\left(u_{-} ; x_0, R\right)^{p-1} + k_j^{p-1} (1-\log(\rho_j)) \right) \fint_{B_j} \overline{w}_j(x) \dx \\
    & \le  C 2^{j(d+sp)} \frac{\rho_j^{sp}}{-\log(2 \rho_j)} k_j^{p-1} (1-\log(\rho_j)) r^{-sp}  \fint_{B_j} \overline{w}_j(x) \dx \\
    & \le C 2^{j(d+sp)} \frac{(1-\log(r))}{-\log(12r)} \rho_j^{sp} k_j^{p-1}r^{-sp}  \fint_{B_j} \overline{w}_j(x) \dx \le  C 2^{j(d+sp)} \rho_j^{sp} k_j^{p-1}r^{-sp}  \fint_{B_j} \overline{w}_j(x), 
\end{align*}
since for $r \in [0, \frac{1}{24}]$, the function $g(r) = (1-\log(r))(-\log(12r))^{-1}$ is bounded. 
To estimate $J_4$, we note that the nonlocal integral survives over $\rd\setminus B_R$. Using the fact that $(k_j-u)_-=(u-k_j)_+\leq u_+$, we estimate
\begin{align*}
    J_4 &=\frac{\rho_j^{sp}}{-\log(2 \rho_j)} \left( \underset{x \in \text{supp}(\phi_j)}{\esssup}\, \int_{\rd \setminus B_R}  \Km(|x-y|) \underline{w}_j(y)^{p-1}\dy \right) \fint_{B_{j}} \overline{w}_j(x)\phi_j(x)^{p} \dx \no\\
     & \le C 2^{j(d+sp)} \frac{\rho_j^{sp}}{-\log(2 \rho_j)} \left(\int_{\rd \setminus B_R}   
     \left( \frac{1}{|x_0-y|^{d+sp}} + \Km(|x_0-y|)\right) (k_j-u)_-^{p-1}
     \dy \right) \\
     & \quad \fint_{B_j} \overline{w}_j(x) \dx \\
     & \le C 2^{j(d+sp)} \frac{\rho_j^{sp}}{-\log(2 \rho_j)}  r^{-sp} \left(\frac{r}{R}\right)^{sp} \operatorname{Tail}_-\left(u_{+} ; x_0, R\right)^{p-1} \fint_{B_j} \overline{w}_j(x) \dx.
\end{align*}
Therefore, again using the definition of $k_j$ in \eqref{notation-2} and using the boundedness of $g(r)$, 
\begin{align*}
    J_4 \le C  2^{j(d+sp)}\rho_j^{sp} k_j^{p-1} g(r) r^{-sp}  \fint_{B_j} \overline{w}_j(x) \dx\leq C 2^{j(d+sp)} \rho_j^{sp} k_j^{p-1} r^{-sp}  \fint_{B_j} \overline{w}_j(x) \dx.
\end{align*}
Finally, similarly like the homonym term in the proof of Proposition \ref{local-boundedness-I}, and using Remark \ref{support-positive-kernel}, we estimate 
\begin{align*}
    J_5 \le \rho_j^{sp} 2^{j{(d+sp)}}(\rho_j^{-sp}+C_1)\fint_{B_j} \overline{w}_j(x)^p \dx. 
\end{align*}
Accumulating all the estimates, from \eqref{expan101}, we obtain 
\begin{align*}
     & \frac{\rho_j^{sp-d}}{-\log(2 \rho_j)} \iint_{B_j \times B_j} \K(|x-y|) |\overline{w}_j(x)\phi_j(x) - \overline{w}_j(y)\phi_j(y)|^p \dxy  \\
     & \le k_j \frac{\rho_j^{sp}}{-\log(2 \rho_j)} \norm{f}_{L^q(B_R)} r^{-\frac{d}{q}} \left(\frac{\left|B_j \cap \{u<k_j\}\right|}{|B_j|}\right)^{\frac{q-1}{q}} + C2^{j(p+sp+d)}   \fint_{B_j} \overline{w}_j(x)^p \dx \\
     & \quad + C 2^{j(p+sp+d)} \rho_j^{sp} k_j^{p-1} r^{-sp}  \fint_{B_j} \overline{w}_j(x) \dx.
\end{align*}
Further, for any $a_1, a_2 \ge 1$, we observe that 
\begin{align*}
    \fint_{B_j} |\overline{w}_j(x)|^{a_1} \dx \le \left( \fint_{B_j} |\overline{w}_j(x)|^{a_1 a_2} \dx \right)^{\frac{1}{a_2}} \le k_j^{a_1} \left(\frac{|B_j \cap \{ u < k_j\}|}{|B_j|} \right)^{\frac{1}{a_2}}.
\end{align*}
Therefore, 
\begin{align*}
   & \frac{\rho_j^{sp-d}}{-\log(2 \rho_j)} \iint_{B_j \times B_j} \K(|x-y|) |\overline{w}_j(x)\phi_j(x) - \overline{w}_j(y)\phi_j(y)|^p \dxy  \\
   & \le k_j \frac{\rho_j^{sp}}{-\log(2 \rho_j)} \norm{f}_{L^q(B_R)} r^{-\frac{d}{q}} \left(\frac{\left|B_j \cap \{u<k_j\}\right|}{|B_j|}\right)^{\frac{q-1}{q}} + C2^{j(p+sp+d)}  k_j^p \left(\frac{\left|B_j \cap \{u<k_j\}\right|}{|B_j|}\right)^{\frac{q-1}{q}} \\
   & \quad + C 2^{j(p+sp+d)} \rho_j^{sp} k_j^p r^{-sp} \left(\frac{\left|B_j \cap \{u<k_j\}\right|}{|B_j|}\right)^{\frac{q-1}{q}}.
\end{align*}
From the definition of $k_j$ we note that 
\begin{align*}
    k_j \ge \delta k \ge (1-\log(r))^{-\frac{1}{p-1}}r^{\frac{qsp-d}{q(p-1)}} \norm{f}_{L^q(B_R)}^{\frac{1}{p-1}},
\end{align*}
which implies 
\begin{align*}
    k_j^{p-1}(1-\log(r)) r^{\frac{d-qsp}{q}} \ge \norm{f}_{L^q(B_R)}.
\end{align*}
With the help of this inequality, we obtain
\begin{align*}
    & \frac{\rho_j^{sp-d}}{-\log(2 \rho_j)} \iint_{B_j \times B_j} \K(|x-y|) |\overline{w}_j(x)\phi_j(x) - \overline{w}_j(y)\phi_j(y)|^p \dxy  \\
   & \le k_j^{p} \rho_j^{sp} r^{-sp} \frac{(1-\log(r))}{-\log(12r)}\left(\frac{\left|B_j \cap \{u<k_j\}\right|}{|B_j|}\right)^{\frac{q-1}{q}} + C2^{j(p+sp+d)} k_j^p  \left(\frac{\left|B_j \cap \{u<k_j\}\right|}{|B_j|}\right)^{\frac{q-1}{q}} \\
   & \le C 2^{j(p+sp+d)} k_j^p \rho_j^{sp} r^{-sp} \left(\frac{\left|B_j \cap \{u<k_j\}\right|}{|B_j|}\right)^{\frac{q-1}{q}} \\
   & \le C 2^{j(p+sp+d)} k_j^p \left(\frac{\left|B_j \cap \{u<k_j\}\right|}{|B_j|}\right)^{\frac{q-1}{q}},
\end{align*}
where in the third and last inequalities we use the boundedness of $g$ and $\rho_j \le 6r$ respectively.
Further, using the fractional Poincare inequality (Proposition \ref{poincare}), for some $C=C(d,p,s)$, we have  
\begin{align*}
  & C \frac{\rho_j^{sp-d}}{-\log(2 \rho_j)} \iint_{B_j \times B_j} \K(|x-y|) |\overline{w}_j(x)\phi_j(x) - \overline{w}_j(y)\phi_j(y)|^p \dxy  \\
  & \ge C \left( \fint_{B_j} |\overline{w}_j \phi_j|^{p^*_s} \dx \right)^{\frac{p}{p^*_s}} \ge C \left( \fint_{B_{j+1}} |\overline{w}_j \phi_j|^{p^*_s} \dx \right)^{\frac{p}{p^*_s}} \ge (k_j-k_{j+1})^p \left( \frac{|B_{j+1} \cap \{ u < k_{j+1}\}|}{|B_{j+1}|} \right)^{\frac{p}{p^*_s}}. 
\end{align*}
Therefore, 
\begin{align*}
    (k_j-k_{j+1})^p \left( \frac{|B_{j+1} \cap \{ u < k_{j+1}\}|}{|B_{j+1}|} \right)^{\frac{d-sp}{d}} \le C 2^{j(p+sp+d)} k_j^p \left(\frac{\left|B_j \cap \{u<k_j\}\right|}{|B_j|}\right)^{\frac{q-1}{q}}.
\end{align*}
Let 
\begin{align*}
    Y_j := \left( \frac{|B_j \cap \{ u<k_j\}|}{|B_j|} \right)^{\frac{q-1}{pq}}.
\end{align*}
From \eqref{obser} there exists $C=C(d,s,p)>1$ such that
\begin{align*}
&\left(Y_{j+1}\right)^{p\frac{d-sp}{d}\frac{q}{q-1}} \leq C 2^{j(d+2 p+p s)} Y_j^p \implies Y_{j+1}\leq C\hat{C}^j\left(Y_j\right)^{1+\beta},
\end{align*}
where using the fact that $q>\frac{d}{sp}$, we get $\beta := \frac{d}{d-sp}\frac{q-1}{q}-1>0$, and $\hat{C}:= 2^{\frac{d(d+2p+ps)(q-1)}{pq(d-sp)}}>1$. We choose $c_0 = C$ and $b = \hat{C}$ in Lemma \ref{iteration}. Using \eqref{expan9}, 
\begin{align*}
k_0  = \frac{3}{2} \delta k & \le 2\delta k - \frac 12(1-\log(r))^{-\frac{1}{p-1}}\Bigg(\left(\frac{r}{R}\right)^{\frac{sp}{p-1}} \left(\operatorname{Tail}_+\left(u_{-} ; x_0, R\right)+\operatorname{Tail}_-\left(u_{+} ; x_0, R\right)\right)\\
& \quad + r^{\frac{qsp-d}{q(p-1)}}\norm{f}_{L^q(B_R)}^{\frac{1}{p-1}}\Bigg)-\frac{\var}{2}.
\end{align*}
Further, using \eqref{expan2}, for some $C=C(d,s,p)$ and for every $\delta \in (0, \frac{1}{4})$, we have 
\begin{align*}
    Y_0 \le \left( \frac{C}{\tau \log \frac{1}{2 \delta}} \right)^{\frac{q-1}{pq}}.
\end{align*}
We now choose $\delta=\delta(d,s,p,\tau)$ as
\begin{align*}
0<\delta:=\frac{1}{4} \exp \left(-\frac{C_1 c_0^{\frac{pq}{\beta(q-1)}} b^{\frac{pq}{\beta^2(q-1)}}}{\tau}\right)<\frac{1}{4},
\end{align*}
so that the estimate $Y_0 \leq c_0^{-\frac{1}{\beta}} b^{-\frac{1}{\beta^2}}$ holds in Lemma \ref{iteration}. Therefore, applying Lemma \ref{iteration}, we infer that $Y_j \ra 0$ as $j \ra \infty$, which immediately gives
\begin{align*}
\underset{B_{4 r}(x_0)}{\essinf } \,u \geq \delta k,
\end{align*}
and hence \eqref{expan8} holds. This completes the proof.
\end{proof}

We conclude this section with the following tail estimate for a weak solution to \eqref{main_PDE}.

\begin{proposition}[Tail estimate]\label{Tail}
Let $s \in (0,s_0), d>sp,$ and $f$ be as given in \eqref{weight}. We consider $B_R(x_0)\subset \Omega$ with $\text{diam}(B_R(x_0))\leq 1$. Assume that $u$ is a weak solution to \eqref{main_PDE} satisfying $u \geq 0$ in $B_R(x_0)$. Let $r>0$ be such that $B_r (x_0)\subset B_{R}(x_0)$. Then we have
\begin{align*}
&\operatorname{Tail}_+(u_+,x_0,r)+\operatorname{Tail}_-(u_-,x_0,r)\no\\ 
&\leq C(d,s,p)\bigg((1-\log r)^{\frac{1}{p-1}}\underset{B_{r}(x_0)}{\esssup}\,u+\left(\frac{r}{R}\right)^{\frac{sp}{p-1}}\left[\operatorname{Tail}_{+}(u_-;x_0,R)+\operatorname{Tail}_{-}(u_+;x_0,R)\right]\no\\
& \quad+r^{\frac{qsp-d}{q(p-1)}}\norm{f}^{\frac{1}{p-1}}_{L^q(B_R(x_0))}\bigg).
\end{align*}
\end{proposition}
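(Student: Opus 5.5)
We follow the tail estimate of Di Castro--Kuusi--Palatucci \cite[Lemma 4.2]{KuusiHarnack}: test the equation with a function that isolates the far-field interaction. Set $M:=\esssup_{B_r}u$ and fix a cut-off $\phi\in\cc(B_{r/2})$ with $0\le\phi\le1$, $\phi\equiv1$ on $B_{r/4}$ and $|\nabla\phi|\le C/r$. Since $u$ is a weak solution, both $\pm v$ with $v:=(u-2M)\phi^p$ are admissible up to the usual splitting into nonnegative parts. As $u\le M$ on $B_r$, we have $v=-(2M-u)\phi^p\le0$, with $|u-2M|\in[M,2M]$ on $\operatorname{supp}\phi$. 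Hence we use $-v\ge0$ as a test function in Definition \ref{weaksoldef} and get an equality.

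We split the double integral into $B_r\times B_r$ and $2(\rd\setminus B_r)\times B_r$. On $B_r\times B_r$ the kernel is nonnegative because $r<1$. The argument of \cite{KuusiHarnack} bounds this local part by $CM^p\iint_{B_r\times B_r}\K|\phi(x)-\phi(y)|^p$. By the computation \eqref{int-1}, this is at most $CM^p r^{d-sp}(1-\log r)$, which produces the factor $(1-\log r)^{1/(p-1)}$ in front of $\esssup u$. The $f$-term gives $\int|f|\,2M\phi^p\le CM\|f\|_{L^q(B_R)}r^{d(1-1/q)}$, and Young's inequality splits this into $M^p r^{d-sp}$ and $r^{d-sp}\big(r^{sp-d/q}\|f\|\big)^{p/(p-1)}$.

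The far-field part is the crux. For $x\in B_{r/2}$ and $y\notin B_r$ the integrand is $\K(|x-y|)|u(y)-u(x)|^{p-2}(u(y)-u(x))(2M-u(x))\phi(x)^p$, and we decompose $\K$ via \eqref{split-2}. We want the terms giving $\Tail_+(u_+)$ and $\Tail_-(u_-)$ to appear with a good sign on one side. Where $u(y)\ge 2M$, the factor $(u(y)-u(x))^{p-1}\ge c\,u_+(y)^{p-1}-CM^{p-1}$ appears multiplied by the positive part $r^{-d-sp}+\Kp$ of the kernel. Where $u(y)<0$, the factor $|u(y)-u(x)|^{p-2}(u(y)-u(x))\le -c\,u_-(y)^{p-1}$ appears multiplied by $-\Km$, which gives a positive contribution $\Km u_-^{p-1}$. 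Together these produce $\int_{B_R\setminus B_r}$-type good terms $\ge c M r^{d}\big[r^{-sp}(\Tail_+(u_+;x_0,r)^{p-1}+\Tail_-(u_-;x_0,r)^{p-1})\big]$, up to errors $CM^p r^{d-sp}(1-\log r)$ from the bounded region $\{0\le u(y)\le2M\}$, which we control using Remark \ref{support-positive-kernel}. The cross terms $\Kp u_-^{p-1}$ and $\Km u_+^{p-1}$ have the bad sign. We show they are supported in $\rd\setminus B_R$, because $u\ge0$ in $B_R$, or, for $\Km$, because $\Km$ vanishes on $B_1\supset B_R$ given $\operatorname{diam}B_R\le1$. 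By the comparisons $|x-y|\sim|x_0-y|$ used in \eqref{bound-1} and \eqref{bound-2}, these cross terms are bounded by $CMr^dR^{-sp}\big[\Tail_+(u_-;x_0,R)^{p-1}+\Tail_-(u_+;x_0,R)^{p-1}\big]$. The two tails are evaluated at different centers and radii, so we pass from $|x-y|$ to $|x_0-y|$ with constants independent of $r$. For the log weights we use $(-\log|x-y|)_\pm\le\log 2+(-\log|x_0-y|)_\pm$, as in \eqref{for log}.

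Finally we divide the resulting inequality by $M r^{d-sp}$. If $M=0$, we first replace $M$ by $M+\varepsilon$ and let $\varepsilon\to0$. After taking $(p-1)$-th roots we obtain the claim. The main obstacle is the sign bookkeeping in the mixed region: making sure every contribution with a bad sign either lies outside $B_R$ or is bounded by $M^{p-1}$ times an integrable kernel with the $(1-\log r)r^{-sp}$ scaling. This is where we also use $r\le R\le1$ to ensure $\Kp$ and $\Km$ are supported correctly.
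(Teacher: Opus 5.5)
Your strategy is the paper's. You use the same test function $(u-2M)\phi^p$ with $\phi\in\cc(B_{r/2})$ and the same bound $CM^pr^{d-sp}(1-\log r)$ for the local part via \eqref{int-1}. The sign bookkeeping after \eqref{split-2} is also the same: the positive part of the kernel makes $u_+$ good and $u_-$ bad, and $\Km$ does the opposite. The bad pairings live in $\rd\setminus B_R$ (since $u\ge0$ in $B_R$, resp.\ since $|x-y|<2R\le1$ for $x\in B_{r/2}$, $y\in B_R$), where they are absorbed into $\Tail_+(u_-;x_0,R)$ and $\Tail_-(u_+;x_0,R)$. Sorting by the value of $u(y)$, rather than splitting into $B_R\setminus B_r$ and $\rd\setminus B_R$ with the pointwise bounds \eqref{bound-4} and \eqref{bound4.1} as the paper does, is cosmetic. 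Your $M+\varepsilon$ device for $M=0$ is a precision the paper omits.

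One step, however, fails as written: the treatment of the $f$-term. After Young's inequality the right-hand side contains $r^{d-sp}\big(r^{sp-d/q}\|f\|_{L^q(B_R)}\big)^{p/(p-1)}$, which carries no factor $M$. Dividing by $Mr^{d-sp}$ therefore leaves $M^{-1}\big(r^{sp-d/q}\|f\|_{L^q(B_R)}\big)^{p/(p-1)}$. This cannot be bounded by the right-hand side of the claim when $M$ is small, and it blows up in your limit $\varepsilon\to0$ when $M=0$. Raising the level to $M+\big(r^{sp-d/q}\|f\|_{L^q(B_R)}\big)^{1/(p-1)}$ would cure this only at the cost of an extra factor $(1-\log r)^{1/(p-1)}$ on the $f$-term. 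The remedy is to skip Young altogether. The bound $CM\|f\|_{L^q(B_R)}r^{d-d/q}$ is already linear in $M$, so dividing by $Mr^{d-sp}$ gives exactly $Cr^{sp-d/q}\|f\|_{L^q(B_R)}$, which is what the paper does after \eqref{tailestI4}.

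A minor further point concerns the portion $\int_{\rd\setminus B_r}|x_0-y|^{-d-sp}u_-(y)^{p-1}\dy$ of $\Tail_-(u_-;x_0,r)^{p-1}$. It is not produced by any good term, since with the positive kernel $u_-$ has the bad sign. It is harmless only because $u_-=0$ in $B_R$, so it is at most $R^{-sp}\Tail_+(u_-;x_0,R)^{p-1}$.
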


\begin{proof}
 Let $M=\esssup_{B_r} (u)$ and $\phi\in \C_c^{\infty}(B_{\frac{r}{2}})$ be a cut-off function such that
\begin{align*}
    0\leq\phi\leq 1 \text{ in } B_{\frac{r}{2}}, \; \phi=1 \text{ in } B_{\frac{r}{4}}, \text{ and } |\nabla\phi|\leq\frac{8}{r} \text{ in } B_{\frac{r}{2}}.
\end{align*}
For $w=u-2M$, taking $w\phi^p$ as a test function,
\begin{align}\label{tailtest}
&\int_{B_r} f(x)w\phi^p\dx \no \\ 
&=\iint_{B_r \times B_r}\K(|x-y|) |u(x)-u(y)|^{p-2}(u(x)-u(y)) (w(x)\phi(x)^p-w(y)\phi(y)^p) \dxy\no\\
&\quad+2\iint_{\mathbb{R}^d\setminus B_r\times B_r} \K(|x-y|) |u(x)-u(y)|^{p-2}(u(y)-u(x)) w(y)\phi(y)^p\dxy\no\\
&:=I_1+I_2.
\end{align}
Note that, $\K(|x-y|) \ge 0$ for every $x, y \in B_r$. In view of $(4.11)$ in \cite[Pages 1827--1828]{KuusiHarnack},
\begin{align}\label{tailestI2}
I_1&\geq-C(d,s,p)M^p\iint_{B_r \times B_r}\K(|x-y|) |\phi(x)-\phi(y)|^p \dxy.
\end{align}
Now, using $\abs{\phi(x) - \phi(y)} \le C\frac{\abs{x-y}}{r}$ for $x,y \in B_{r}$, and using \eqref{int-1},
\begin{align*}
\iint_{B_{r} \times B_{r}} \K(|x-y|) |\phi(x)-\phi(y)|^p \dxy \le C(d,s,p)r^{d-sp}(1- \log(r)).
\end{align*}
Thus, \eqref{tailestI2} yields
\begin{align}{\label{I_2Fin}}
    I_1\geq -C(d,s,p)M^pr^{d-sp}(1- \log(r)).
\end{align}
Further, 
\begin{align}\label{tailestI4}
        \int_{B_r} |f(x)||u-2M|\phi^p\dx \le M \int_{B_r} |f(x)| \dx \le M \norm{f}_{L^q(B_R)} |B_r|^{\frac{q-1}{q}},
\end{align} 
For the term $I_2$, the kernel $\K(|x-y|)$ can change sign. We therefore split
\begin{align}{\label{tail22}}
    I_2&=\iint_{B_R\setminus B_r \times B_r}\K(|x-y|) |u(x)-u(y)|^{p-2}(u(y)-u(x)) w(y)\phi(y)^p\dxy\no\\&\quad +\iint_{\mathbb{R}^d\setminus B_R \times B_r}\K(|x-y|) |u(x)-u(y)|^{p-2}(u(y)-u(x)) w(y)\phi(y)^p\dxy=:J+L.
\end{align}
For the first term, the kernel $\K(|x-y|)$ is positive for every $x\in B_R\setminus B_r$ and $y\in B_r$. Moreover, for every $x\in\rd \setminus B_r$ and $y\in B_r$, 
\begin{align}\label{bound-4}
    |u(x)-u(y)|^{p-2}(u(y)-u(x)) w(y) \ge M(u(x)-M)_{+}^{p-1} - 2M\chi_{\{u(x)<M\}}(u(y)-u(x))_{+}^{p-1}.
\end{align}
Hence
\begin{align*}
J\geq & \iint_{B_R \backslash B_r \times B_r} \K(|x-y|)  M(u(x)-M)_{+}^{p-1} \phi(y)^p \dxy \no\\
& -\iint_{B_R \backslash B_r \times B_r} 2 \K(|x-y|) M\chi_{\{u(x)<M\}}(u(y)-u(x))_{+}^{p-1}\phi(y)^p \dxy =:  J_1-J_2.
\end{align*}
Note that for $x\in \rd\setminus B_{r}$ and $y\in \operatorname{supp}(\phi)=B_{\frac{r}{2}}$, we have 
\begin{align}\label{bound-3}
\frac{|x-x_0|}{|y-x|}\leq 1+\frac{|y-x_0|}{|y-x|}\leq 1+\frac{\frac{r}{2}}{\frac{r}{2}}=2, \text{ and } \frac{|y-x|}{|x-x_0|}\leq 1+\frac{|y-x_0|}{|x-x_0|}\leq 1+\frac{\frac{r}{2}}{r}=\frac{3}{2}.
\end{align}
Also, $(-\log(|x-y|))_-= 0$ in $B_R\times B_r$ as $R\leq \frac 12$. Using \eqref{bound-3} and $-\log(|y-x|) \ge - \log(\frac{3}{2}|x-x_0|)$, we have the following lower estimate:
\begin{align*}
J_1 & \geq CM \iint_{B_R \backslash B_r \times B_r}  \K(|x-y|) u(x)_+^{p-1}\phi(y)^p\dxy\\&\quad-CM^p \iint_{B_R\backslash B_r \times B_r}  \K(|x-y|) \phi(y)^p \dxy \\
& \geq CM \iint_{B_R \backslash B_r \times B_{r}} \frac{1+ (-\log(\frac{3}{2}|x-x_0|)_+)}{|x-x_0|^{d+sp}} u(x)_+^{p-1} \phi(y)^p\dxy \\
&\quad- CM^p \iint_{B_R\backslash B_r \times B_r}  \frac{1+ (-\log(|x-x_0|)_+)}{|x-x_0|^{d+sp}} \phi(y)^p\dxy \\
& \ge CM r^d \int_{B_R \backslash B_r} \frac{1+ (-\log(\frac{3}{2}|x-x_0|)_+)}{|x-x_0|^{d+sp}} u(x)_+^{p-1} \dx- CM^p r^{d-sp}(1-\log(r)),
\end{align*}
where in the last inequality we use the derivation of the integral over $\rd \setminus B_r$ (see \eqref{log-1}). 
Also,
\begin{align*}
J_2 & \leq 2 M^p \iint_{B_R \backslash B_r \times B_r} \K(|x-y|)\phi(y)^p\dxy \leq CM^pr^{d-sp}(1-\log(r)).
\end{align*}
Therefore, 
\begin{align}\label{tail-final-1}
    J & \ge CM r^d \int_{B_R \backslash B_r} \frac{1+ (-\log(\frac{3}{2}|x-x_0|)_+)}{|x-x_0|^{d+sp}} u(x)_+^{p-1} \dx\no \\ & \quad- CM r^d\int_{B_R \backslash B_r } \frac{(-\log(|x-x_0|)_-)}{|x-x_0|^{d+sp}} u(x)_+^{p-1} \dx - C(d,s,p)M^p r^{d-sp}(1-\log(r)),
\end{align}
where $C=C(d,s,p)$. Here we have also used $(-\log(|x-y|))_-= 0$ in $B_R$. To estimate $L$, we split 
\begin{align*}
    L= &\iint_{\mathbb{R}^d\setminus B_R \times B_r} \left(\frac{C_{d,s,p}B_{d,s,p}}{|x-y|^{d+sp}} + p\Kp(|x-y|)\right)  |u(x)-u(y)|^{p-2}(u(y)-u(x)) w(y)\phi(y)^p\dxy \\
    \quad & - p\iint_{\mathbb{R}^d\setminus B_R \times B_r} \Km(|x-y|)|u(x)-u(y)|^{p-2}(u(y)-u(x)) w(y)\phi(y)^p\dxy:=L_1 - L_2.
\end{align*}
Since the kernel presents in $L_1$ is non-negative, using \eqref{bound-4}, 
\begin{align*}
L_1 & \geq \iint_{\mathbb{R}^d\setminus B_R \times B_r} \left(\frac{C_{d,s,p}B_{d,s,p}}{|x-y|^{d+sp}} + p\Kp(|x-y|)\right)  M(u(x)-M)_{+}^{p-1} \phi(y)^p \dxy \no\\
& \quad -\iint_{\mathbb{R}^d\setminus B_R \times B_r} 2 \left(\frac{C_{d,s,p}B_{d,s,p}}{|x-y|^{d+sp}} + p\Kp(|x-y|)\right) M\chi_{\{u(x)<M\}}(u(y)-u(x))_{+}^{p-1}\phi(y)^p \dxy \no\\
&=: L_{1,1}-L_{1,2},
\end{align*}
where we get similar to $J_1$,
\begin{align*}
    L_{1,1} \ge C(d,s,p)M r^d \int_{\rd \backslash B_R} \frac{1+ (-\log(\frac{3}{2}|x-x_0|)_+)}{|x-x_0|^{d+sp}} u(x)_+^{p-1} \dx - CM^p r^{d-sp}(1-\log(r)).
\end{align*}
Further, note that for $y \in B_r$, $(u(y)-u(x))_+ \le (u(y)-M)_+ + (M-u(x))_+ = (M-u(x))_+ \leq M+u(x)_-$. Hence using \eqref{bound-3},
\begin{align*}
    L_{1,2} & \le C \iint_{\mathbb{R}^d\setminus B_r \times B_r} \frac{1+ (-\log(|x-y|)_+)}{|x-y|^{d+sp}} M\chi_{\{u(x)<M\}}(u(y)-u(x))_{+}^{p-1}\phi(y)^p \dxy \\
    & \le  C M \iint_{\mathbb{R}^d\setminus B_R \times B_r}  \frac{1+ (-\log(|x-x_0|)_+)}{|x-x_0|^{d+sp}} (M + u(x)_-)^{p-1} \phi(y)^p \dxy \\
    & \le C M |B_r| \int_{\mathbb{R}^d\setminus B_R} \frac{1+ (-\log(|x-x_0|)_+)}{|x-x_0|^{d+sp}} (M + u(x)_-)^{p-1} \dx \\
    & \le C |B_r|\left( M^p (R^{-sp} + \widetilde{C}) + M  \int_{\mathbb{R}^d\setminus B_R} \frac{1+ (-\log(|x-x_0|)_+)}{|x-x_0|^{d+sp}} u(x)_-^{p-1} \dx \right),
\end{align*}
where $C=C(d,s,p)$. The last inequality uses Remark \ref{support-positive-kernel}. 
Further, for small $r$, $$|B_r|(R^{-sp} + \widetilde{C}) \le C(d,s,p) \left( r^{d-sp} + r^d \right) \le C(d,s,p)  r^{d-sp}(1-\log(r)).$$
Therefore, 
\begin{align}\label{L1fin}
    L_1 & \ge C(d,s,p)M r^d \int_{\rd \backslash B_R} \frac{1+ (-\log(\frac 32|x-x_0|)_+)}{|x-x_0|^{d+sp}} u(x)_+^{p-1} \dx \no \\
    & \quad - C(d,s,p) M r^d R^{-sp} \operatorname{Tail}_{+}(u_-;x_0,R)^{p-1} - C(d,s,p)M^p r^{d-sp}(1-\log(r)).
\end{align}
Finally, we derive a lower estimate of $L_2$. For that purpose, we require the following upper estimate compared to \eqref{bound-4}:

For every $x\in \rd\setminus B_R$ and $y\in B_r$, 
\begin{align}\label{bound4.1}
|u(x)-u(y)|^{p-2}(u(y)-u(x))w(y)\leq 2M u(x)_+^{p-1}-Mu(x)_-^{p-1}.
\end{align}
Indeed when $u(x)>u(y)$, we have $u(x)>0$ as $u(y)>0$ in $B_r$. Further, $$|u(x)-u(y)|^{p-2}(u(y)-u(x)) w(y)=(u(x)-u(y))^{p-1}(2M-u(y))\leq 2Mu(x)^{p-1}=2Mu(x)_+^{p-1}.$$ 
When $u(x)<u(y)$, we have $$|u(x)-u(y)|^{p-2}(u(y)-u(x))=(u(y)-u(x))_+^{p-1}\geq (-u(x))_+^{p-1}=u(x)_-^{p-1}.$$ 
This gives $$|u(x)-u(y)|^{p-2}(u(y)-u(x)) w(y)\leq u(x)_-^{p-1}(u(y)-2M)\leq -Mu(x)_-^{p-1}.$$ 
Thus \eqref{bound4.1} holds. 

Using \eqref{bound4.1}, we estimate $L_2$ as
\begin{align*}
L_2&=p\iint_{\mathbb{R}^d\setminus B_R \times B_r} \Km(|x-y|)|u(x)-u(y)|^{p-2}(u(y)-u(x)) w(y)\phi(y)^p\dxy\no\\ & \leq p\iint_{\mathbb{R}^d\setminus B_R \times B_r} \Km(|x-y|)(2M u(x)_+^{p-1}-Mu(x)_-^{p-1})\phi(y)^p\dxy\no\\ & \leq  C M \iint_{\mathbb{R}^d\setminus B_R \times B_r}  \frac{1+ (-\log(|x-x_0|)_-)}{|x-x_0|^{d+sp}} u(x)_+^{p-1}\phi(y)^p \dxy \no\\&-CM\iint_{\mathbb{R}^d\setminus B_R \times B_r} \frac{(-\log(\frac 12|x-x_0|)_-)}{|x-x_0|^{d+sp}}u(x)_-^{p-1}\phi(y)^p\dxy\no\\ & \leq C M r^d\int_{\mathbb{R}^d\setminus B_R }\frac{1+ (-\log(|x-x_0|)_-)}{|x-x_0|^{d+sp}} u(x)_+^{p-1}\dxy\no\\ & \quad -CM\iint_{\mathbb{R}^d\setminus B_R \times B_r} \frac{(-\log(\frac 12|x-x_0|)_-)}{|x-x_0|^{d+sp}}u(x)_-^{p-1}\phi(y)^p\dxy,
\end{align*}
for some $C=C(d,s,p)$. 
Therefore noting \eqref{L1fin}, for some $C=C(d,s,p)$, we have 
\begin{align}\label{Lfin}
 L& \ge CM r^d \int_{\rd \backslash B_R} \frac{1+ (-\log(\frac 32|x-x_0|)_+)}{|x-x_0|^{d+sp}} u(x)_+^{p-1} \dx \no\\&\quad +CM\iint_{\mathbb{R}^d\setminus B_R \times B_r} \frac{1+ (-\log(\frac 12|x-x_0|)_-)}{|x-x_0|^{d+sp}}u(x)_-^{p-1}\phi(y)^p\dxy\no \\
    & \quad - C M r^d R^{-sp} \operatorname{Tail}_{+}(u_-;x_0,R)^{p-1} - C M^p r^{d-sp}(1-\log(r))\no\\&\quad-CM r^d\int_{\mathbb{R}^d\setminus B_R }  \frac{1+ (-\log(|x-x_0|)_-)}{|x-x_0|^{d+sp}} u(x)_+^{p-1}\dx\no\\& \ge CM r^d \int_{\rd \backslash B_R} \frac{1+ (-\log(\frac 32|x-x_0|)_+)}{|x-x_0|^{d+sp}} u(x)_+^{p-1} \dx \no\\&\quad +CMr^d\int_{\mathbb{R}^d\setminus B_R } \frac{1+ (-\log(\frac 12|x-x_0|)_-)}{|x-x_0|^{d+sp}}u(x)_-^{p-1}\dx\no \\
    & \quad - C M r^d R^{-sp} \operatorname{Tail}_{+}(u_-;x_0,R)^{p-1}- C M r^d R^{-sp} \operatorname{Tail}_{-}(u_+;x_0,R)^{p-1}\no\\&\quad - CM^p r^{d-sp}(1-\log(r)).
\end{align}
Combining the estimates \eqref{I_2Fin}, \eqref{tailestI4}, \eqref{tail22}, \eqref{tail-final-1}, \eqref{Lfin} and using them in \eqref{tailtest}, we obtain
\begin{align}\label{tailestfinal}
&\int_{\rd \backslash B_r} \frac{1+ (-\log(\frac 32|x-x_0|)_+)}{|x-x_0|^{d+sp}} u(x)_+^{p-1} \dx+\int_{\mathbb{R}^d\setminus B_R } \frac{1+ (-\log(\frac 12|x-x_0|)_-)}{|x-x_0|^{d+sp}}u(x)_-^{p-1}\dx\no\\
&\leq C(d,s,p)\Bigg( R^{-sp} \operatorname{Tail}_{-}(u_+;x_0,R)^{p-1}+ R^{-sp} \operatorname{Tail}_{+}(u_-;x_0,R)^{p-1}\no\\
&\quad + M^{p-1} r^{-sp}(1-\log(r))+ \norm{f}_{L^q(B_R)} |B_r|^{-\frac{1}{q}}\Bigg).
\end{align}
We now eliminate the fractions appearing inside the logarithmic functions on the left-hand side of \eqref{tailestfinal}. For brevity, we carry out the argument only for the kernel $\Kp$. Note that $$ (-\log(|x-x_0|))_+\leq \log\frac 32+(-\log \frac32 -\log(|x-x_0|))_+.$$
Therefore,
\begin{multline}\label{note}
\int_{\rd \backslash B_r} \frac{1+ (-\log(|x-x_0|)_+)}{|x-x_0|^{d+sp}} u(x)_+^{p-1} \dx \\ \leq  \int_{\rd \backslash B_r} \frac{1+\log \frac 32+ (-\log(\frac 32|x-x_0|)_+)}{|x-x_0|^{d+sp}} u(x)_+^{p-1} \dx.
\end{multline}
Denoting the right hand side of \eqref{tailestfinal} by $A$, and noting that each term present in the left are non-negative, we get 
$$ \int_{\rd \backslash B_r} \frac{1+\log \frac 32}{|x-x_0|^{d+sp}} u(x)_+^{p-1} \dx\leq  (1+\log\frac 32)A, \text{ and } \int_{\rd \backslash B_r} \frac{(-\log(\frac 32|x-x_0|)_+)}{|x-x_0|^{d+sp}} u(x)_+^{p-1} \dx\leq  A.
$$
So by \eqref{note},
$$
\int_{\rd \backslash B_r} \frac{1+ (-\log(|x-x_0|)_+)}{|x-x_0|^{d+sp}} u(x)_+^{p-1} \dx\leq (2+\log\frac{3}{2})A,
$$
 and similarly
$$
\int_{\rd \backslash B_R} \frac{1+ (-\log(|x-x_0|)_-)}{|x-x_0|^{d+sp}} u(x)_-^{p-1} \dx\leq (2-\log\frac{1}{2})A.
$$
Finally, since $u_-\equiv 0$ in $B_R\setminus B_r$, the second integral present on the left-hand side of \eqref{tailestfinal} can be extended to $\rd \setminus B_r$. Thus from \eqref{tailestfinal} we conclude: 
 \begin{align*}
     &\quad r^{-sp}\left(\operatorname{Tail}_+(u_+,x_0,r)^{p-1}+\operatorname{Tail}_-(u_-,x_0,r)^{p-1}\right)\\&\leq C(d,s,p)\bigg(R^{-sp} \operatorname{Tail}_{+}(u_-;x_0,R)^{p-1} +R^{-sp}\operatorname{Tail}_{-}(u_+;x_0,R)^{p-1}\\&\quad+ M^{p-1} r^{-sp}(1-\log(r))+ \norm{f}_{L^q(B_R)} |B_r|^{-\frac{1}{q}}\bigg),
 \end{align*}
 which implies
 \begin{align*}
    & \operatorname{Tail}_+(u_+,x_0,r)+\operatorname{Tail}_-(u_-,x_0,r)\\&\quad\leq C(d,s,p)\bigg((1-\log r)^{\frac{1}{p-1}}\underset{B_{r}(x_0)}{\esssup}\,u+\left(\frac{r}{R}\right)^{\frac{sp}{p-1}}\operatorname{Tail}_{+}(u_-;x_0,R)\\&\quad+\left(\frac{r}{R}\right)^{\frac{sp}{p-1}}\operatorname{Tail}_{-}(u_+;x_0,R)+r^{\frac{sp-\frac dq}{p-1}}\norm{f}^{\frac{1}{p-1}}_{L^q(B_R)}\bigg).
 \end{align*}
 This completes the proof. 
\end{proof} 

\section{Harnack and weak Harnack inequalities}\label{sec: harnack}

From the application of the Krylov-Safonov covering argument (\cite[Lemma 2.5]{KuusiHarnack}) with the expansion of positivity (Proposition \ref{growth-lemma}), we get the following preliminary version of the weak Harnack inequality. The proof follows standard arguments; see, for example, \cite[Lemma 4.1]{KuusiHarnack}.

\begin{proposition}[Weak Harnack inequality]\label{weakharnack-1}
Let $s \in (0,s_0), d>sp,$ and $f$ be as given in \eqref{weight}. We consider $B_R(x_0)\subset \Omega$ with $\text{diam}(B_R(x_0))\leq 1$. Assume that $u$ is a weak supersolution to \eqref{main_PDE} satisfying $u \geq 0$ in $B_R(x_0) \subset \Omega$. Let $0<r\leq \frac{1}{24}$ be such that $B_r(x_0) \subset B_{\frac R{16}}(x_0)$. Then there exists $\var \in (0,1)$ and $C=C(d,s,p)>1$ such that 
\begin{align}\label{wh-1.1}
 \left( \fint_{B_r(x_0)} u^{\var} \dx \right)^{\frac{1}{\var}} &  \le C \underset{B_{r}\left(x_0\right)}{{\essinf}}\, u +  C (1-\log r)^{-\frac{1}{p-1}}\Bigg(\left(\frac{r}{R}\right)^{\frac{sp}{p-1}} \left[\operatorname{Tail}_+\left(u_{-} ; x_0, R\right)+\operatorname{Tail}_-\left(u_{+} ; x_0, R\right)\right] \no \\
&\quad + r^{\frac{qsp-d}{q(p-1)}} \norm{f}_{L^q(B_R(x_0))}^{\frac{1}{p-1}} \Bigg).
\end{align}
\end{proposition}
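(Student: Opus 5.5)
We follow the Krylov--Safonov route of \cite[Lemma 4.1]{KuusiHarnack}, with the growth lemma (Proposition \ref{growth-lemma}) as the expansion-of-positivity step. Write
\[
T:=(1-\log r)^{-\frac1{p-1}}\Big(\big(\tfrac rR\big)^{\frac{sp}{p-1}}\big[\operatorname{Tail}_+(u_-;x_0,R)+\operatorname{Tail}_-(u_+;x_0,R)\big]+r^{\frac{qsp-d}{q(p-1)}}\norm{f}_{L^q(B_R)}^{\frac1{p-1}}\Big).
\]
It suffices to prove a level-set decay estimate: for every $t>0$,
\[
\frac{|B_r\cap\{u>t\}|}{|B_r|}\le C\Big(\frac{\essinf_{B_r}u+T}{t}\Big)^{\varepsilon_0}
\]
for some $\varepsilon_0>0$. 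The weak Harnack inequality then follows by the layer-cake formula $\fint_{B_r}u^\varepsilon=\varepsilon\int_0^\infty t^{\varepsilon-1}|B_r\cap\{u>t\}|\,dt/|B_r|$ with any $\varepsilon<\varepsilon_0$. We split the $t$-integral at $t_0=\essinf_{B_r}u+T$: below $t_0$ we use the trivial bound $1$, and above $t_0$ we use the decay estimate.

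To get the decay, apply the growth lemma at all smaller scales and centers. If a ball $B_\rho(z)$ with $B_{4\rho}(z)\subset B_r$ (suitably nested in $B_R$) satisfies $|B_\rho(z)\cap\{u\ge k\}|\ge \tau|B_\rho(z)|$, then
\[
\essinf_{B_{4\rho}(z)}u\ge \delta k-T_{z,\rho},
\]
where $T_{z,\rho}$ is the analogous quantity at center $z$ and radius $\rho$. Feed this into the Krylov--Safonov covering lemma \cite[Lemma 2.5]{KuusiHarnack} with the sets $E=\{u>\delta^{-i}\cdots\}$ iteratively. This gives $|B_r\cap\{u>\delta^{-i}t\}|\le (1-c)^i|B_r|$ unless $\essinf_{B_r}u\ge \delta^i t - C\sum_j \delta^j T$, a geometric sum bounded by $CT$. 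Choosing $i$ in terms of $t$ then yields the power decay with $\varepsilon_0=\log(1-c)/\log\delta$.

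The main obstacle is making the error $T_{z,\rho}$ at shifted centers and smaller radii comparable to $T$. There are three parts.

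\emph{Moving the tails from center $z$ to $x_0$.} For $y\notin B_R(x_0)$ and $z\in B_r$ we have $|y-z|\simeq|y-x_0|$. Since $r\le R/16$, the logarithmic weights differ by at most an additive $\log 2$, as in \eqref{for log}. So $\operatorname{Tail}_\pm(\cdot;z,R/2)\le C\operatorname{Tail}_\pm(\cdot;x_0,R)$, using $u\ge0$ in $B_R$ to restrict the relevant integrals to $\rd\setminus B_R$.

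\emph{Monotonicity in the radius.} For $\rho\le r$, the factor $(1-\log\rho)^{-\frac1{p-1}}\rho^{\frac{sp}{p-1}}$ is increasing in $\rho$ for small $\rho$. The $f$-term has exponent $(qsp-d)/q>0$, so it is also increasing. Hence $T_{z,\rho}\le CT$, which is needed for the geometric summation.

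\emph{Admissibility.} We must keep $\rho\le 1/24$ and $B_{\rho}(z)\subset B_{R'/16}(z)$ for an admissible $R'$ (e.g.\ $R'=R/2$) when invoking the growth lemma. This holds because $r\le 1/24$ and $r\le R/16$.
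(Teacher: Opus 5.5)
Your route is the one the paper intends: the paper only says the result follows from the Krylov--Safonov covering lemma \cite[Lemma 2.5]{KuusiHarnack} combined with Proposition~\ref{growth-lemma}, as in \cite[Lemma 4.1]{KuusiHarnack}. Your layer-cake reduction, geometric summation and monotonicity of $\rho\mapsto(1-\log\rho)^{-\frac1{p-1}}\rho^{\frac{sp}{p-1}}$ are fine. The step that fails is the relocation of the new tail $\operatorname{Tail}_-(u_+;\cdot)$. Your justification ``using $u\ge 0$ in $B_R$ to restrict the relevant integrals to $\rd\setminus B_R$'' works for $u_-$, which vanishes in $B_R(x_0)$. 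It does not work for $u_+$, which equals $u$ there. Since $\operatorname{Tail}_-$ contains the fractional weight $|z-y|^{-d-sp}$ and not only $\Km$, the quantity $\operatorname{Tail}_-(u_+;z,R/2)^{p-1}$ contains
\[
(R/2)^{sp}\int_{B_R(x_0)\setminus B_{R/2}(z)}\frac{u(y)^{p-1}}{|z-y|^{d+sp}}\dy .
\]
Nothing on the right-hand side of \eqref{wh-1.1} bounds this term. A nonnegative supersolution may be arbitrarily large on the annulus $B_R(x_0)\setminus B_{R/2}(z)$ while $\essinf_{B_r}u$ and the exterior tails stay fixed. So if you apply Proposition~\ref{growth-lemma} as a black box at a center $z\neq x_0$, the error $T_{z,\rho}$ is not $\lesssim T$, and the covering iteration does not close. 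At the center $x_0$ with outer radius $R$ the problem does not arise, because there the tail integral runs over $\rd\setminus B_R(x_0)$ only.

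The missing observation comes from reopening the proofs of Lemma~\ref{log-estimate} (term $I_2$) and Proposition~\ref{growth-lemma} (term $J_4$). There $u_+$ enters only through $\Km(|x-y|)$ with $y$ in the small ball, and $\Km(|x-y|)=0$ unless $|x-y|>1$. Since $\operatorname{diam}B_R(x_0)\le 1$ and $y\in B_R(x_0)$, this forces $x\notin B_R(x_0)$. Then $\tfrac12|x-x_0|\le|x-y|\le 2|x-x_0|$ and $(\log|x-y|)_+\le\log 2+(\log|x-x_0|)_+$. Redoing those two estimates at center $z$ and radius $\rho$, the $u_+$-error is therefore at most $C\rho^{sp}\int_{\rd\setminus B_R(x_0)}\big(|x-x_0|^{-d-sp}+\Km(|x-x_0|)\big)u_+^{p-1}\dx=C(\rho/R)^{sp}\operatorname{Tail}_-(u_+;x_0,R)^{p-1}$. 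In other words, you need a version of the growth lemma whose $u_+$-tail is this exterior far-field quantity, not the stated $\operatorname{Tail}_-(u_+;z,R')$.

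Separately, your admissibility check does not hold as stated. With $R'=R/2$ the growth lemma needs $\rho\le R/32$, which $r\le R/16$ does not give. Moreover, the covering lemma produces balls $B_{3\rho}(z)$ with $3\rho$ up to about $2r$, which can also violate the bound $\frac1{24}$. You can treat balls with $3\rho\ge 2r$ by applying the growth lemma at center $x_0$ (since then $B_{3\rho}(z)\supset B_r(x_0)$). Otherwise, first prove the estimate for $r\le cR$ and $r\le c$, and then extend by a chaining argument.
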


From the local boundedness and the tail estimate, we now obtain the other half of the Harnack inequality. For that, we require the following iteration lemma from \cite[Lemma 1.1]{acta}.  
\begin{lemma}\label{iteration1}
Let  $0\leq T_0\leq \widetilde{t} \leq T_1$ and assume that $f:[T_0,T_1]\to[0,\infty)$ is a nonnegative bounded function. Suppose that for $T_0\leq \widetilde{t} <\hat{t} \leq T_1$, 
\begin{equation*}
f(\widetilde{t})\leq A(\hat{t}-\widetilde{t})^{-\alpha} +B +\theta f(\hat{t}),
\end{equation*}
where $A,B,\alpha,\theta$ are nonnegative constants and $\theta<1$. 
Then there exists $C=C(\alpha,\theta)$ such that for every $\rho,R$ and $T_0\leq\rho<R\leq T_1$,
\begin{equation*}
f(\rho)\leq C(A(R-\rho)^{-\alpha}+B).
\end{equation*}
\end{lemma}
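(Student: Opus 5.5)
We prove Lemma \ref{iteration1} by the classical geometric-sequence argument of Giaquinta--Giusti. The idea is to apply the assumed inequality repeatedly along an increasing sequence of radii that converges to $R$. The gaps between consecutive radii shrink geometrically, and we choose the ratio so that the growth of $(\hat t-\widetilde t)^{-\alpha}$ is beaten by the powers of $\theta$.

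Fix $T_0\le\rho<R\le T_1$ and a parameter $\lambda\in(0,1)$, to be chosen below. Define
\begin{align*}
t_0:=\rho,\qquad t_{i+1}:=t_i+(1-\lambda)\lambda^{i}(R-\rho),\quad i=0,1,2,\dots
\end{align*}
Then $t_i$ increases to $R$ and stays in $[\rho,R]\subset[T_0,T_1]$. We apply the hypothesis with $\widetilde t=t_i$ and $\hat t=t_{i+1}$, which gives
\begin{align*}
f(t_i)\le A(1-\lambda)^{-\alpha}\lambda^{-i\alpha}(R-\rho)^{-\alpha}+B+\theta f(t_{i+1}).
\end{align*}
Iterating this $k$ times starting from $i=0$ yields
\begin{align*}
f(\rho)\le \theta^{k}f(t_k)+\Big(A(1-\lambda)^{-\alpha}(R-\rho)^{-\alpha}\Big)\sum_{i=0}^{k-1}(\theta\lambda^{-\alpha})^{i}+B\sum_{i=0}^{k-1}\theta^{i}.
\end{align*}

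Next we choose $\lambda$ depending only on $(\alpha,\theta)$ so that $\theta\lambda^{-\alpha}<1$. For $\alpha>0$ take $\lambda^{\alpha}=\frac{1+\theta}{2}$, i.e. $\lambda=\big(\tfrac{1+\theta}{2}\big)^{1/\alpha}$; when $\alpha=0$ any $\lambda\in(0,1)$ works. With this choice both geometric series converge, with sums bounded by $(1-\theta\lambda^{-\alpha})^{-1}$ and $(1-\theta)^{-1}$ respectively.

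The only point needing care is discarding the remainder $\theta^k f(t_k)$. Here the boundedness assumption on $f$ is used: $\theta^k f(t_k)\le\theta^k\sup_{[T_0,T_1]}f\to0$ as $k\to\infty$, since $\theta<1$. Letting $k\to\infty$ gives
\begin{align*}
f(\rho)\le C(\alpha,\theta)\big(A(R-\rho)^{-\alpha}+B\big),
\end{align*}
where we may take $C=(1-\lambda)^{-\alpha}(1-\theta\lambda^{-\alpha})^{-1}+(1-\theta)^{-1}$. We expect no genuine obstacle; the substantive step is the choice of $\lambda$ that makes the first series summable.
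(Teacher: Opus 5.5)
Your proof is correct and is essentially the standard argument: the paper does not prove this lemma itself but cites Giaquinta--Giusti (Lemma 1.1), whose proof uses the same radii $t_{i+1}=t_i+(1-\lambda)\lambda^{i}(R-\rho)$. It iterates along them, chooses $\lambda\in(0,1)$ with $\theta\lambda^{-\alpha}<1$, and uses the boundedness of $f$ to send $\theta^{k}f(t_k)\to 0$.
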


\begin{proposition}[Local boundedness II]\label{weakharnack-2}
    Let $s \in (0,s_0), d>sp,$ and $f$ be as given in \eqref{weight}. We consider $B_R(x_0)\subset \Omega$ with $\text{diam}(B_R(x_0))\leq 1$.  Assume that $u$ is a weak solution to \eqref{main_PDE} satisfying $u \geq 0$ in $B_R(x_0)$. Let $0<r\leq \frac{1}{4}$ be such that $B_r(x_0) \subset B_R(x_0)$. Then there exists $C=C(d,s,p)$ such that the following holds:
    \begin{align}\label{wh-2.1}
    \underset{B_{\frac{r}{2}}(x_0)}{\esssup}\,u & \leq C \Bigg( (1-\log(r))^{-\frac{1}{p-1}}\left(\frac{r}{R}\right)^{\frac{sp}{p-1}}\left[\operatorname{Tail}_{+}(u_-;x_0,R)+\operatorname{Tail}_{-}(u_+;x_0,R)\right]\no \\
    & \quad + (1-\log(r))^{-\frac{\sigma}{p \sigma -1}} r^{\frac{spq-d}{q(p-1)}}\norm{f}_{L^q(B_R(x_0))}^{\frac{1}{p-1}} + \left( \fint_{B_r(x_0)} u^t \dx \right)^{\frac{1}{t}} \Bigg),
    \end{align}
    for every $t \in (0, p\sigma)$. 
\end{proposition}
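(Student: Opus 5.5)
Since $u\ge 0$ in $B_R$, $u_+=u$ and $u_-=0$ in $B_r$. The plan is to combine Local boundedness I (Proposition \ref{local-boundedness-I}) at intermediate scales with the tail estimate (Proposition \ref{Tail}). This converts the subsolution tail pairing $\operatorname{Tail}_+(u_+)+\operatorname{Tail}_-(u_-)$ into the supersolution pairing $\operatorname{Tail}_+(u_-)+\operatorname{Tail}_-(u_+)$. The price is a term $\delta\,\esssup u$ on a slightly larger ball. I will then absorb that term by the iteration Lemma \ref{iteration1}, and finally pass from the $L^{p\sigma}$ average to an $L^t$ average for small $t$.

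\textbf{Step 1 (rescaled local boundedness).} Fix $r/2\le \widetilde t<\hat t\le r$ and set $\rho=\hat t-\widetilde t$. For each $z\in B_{\widetilde t}$, apply Proposition \ref{local-boundedness-I} on $B_{\rho}(z)\subset B_{\hat t}$, so that $\esssup_{B_{\rho/2}(z)}u$ is bounded. The tail term becomes $\delta(1-\log\frac\rho2)^{-\frac1{p-1}}[\operatorname{Tail}_+(u_+;z,\frac\rho2)+\operatorname{Tail}_-(u_-;z,\frac\rho2)]$. Split each integral into $B_{\hat t}\setminus B_{\rho/2}(z)$ and $\rd\setminus B_{\hat t}$.
\begin{itemize}
\item On the near part, $u$ is bounded by $\esssup_{B_{\hat t}}u$. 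The kernel integral gives $(\rho/2)^{sp}\cdot\rho^{-sp}(1-\log\rho)$, so this piece contributes $C\delta\,\esssup_{B_{\hat t}}u$. The $\log$ factor cancels exactly against $(1-\log\frac\rho2)^{-1/(p-1)}$.
\item On the far part, compare $|y-z|$ with $|y-x_0|$ (ratio at most $Cr/\rho$). This yields $C\delta(\frac{r}{\rho})^{\frac{d+sp}{p-1}}(\frac\rho r)^{\frac{sp}{p-1}}(1-\log\rho)^{-\frac1{p-1}}[\operatorname{Tail}_+(u_+;x_0,r)+\operatorname{Tail}_-(u_-;x_0,r)]$, using a shifted-$\log$ argument as in \eqref{for log}.
\end{itemize}

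\textbf{Step 2 (tail estimate and absorption).} Insert Proposition \ref{Tail} for $\operatorname{Tail}_+(u_+;x_0,r)+\operatorname{Tail}_-(u_-;x_0,r)$. This produces $(1-\log r)^{\frac1{p-1}}\esssup_{B_r}u$, which is again of the form $C\delta(\frac r\rho)^{\frac{d}{p-1}}\esssup_{B_r}u$. It also produces the $R$-tails and the $f$-term. Because $\rho$ and $r$ are comparable only up to the iteration, I would instead apply the tail estimate at radius $\hat t$, bounded by $r$, so that the sup is over $B_{\hat t}$.

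Then I use Young's inequality on the $L^{p\sigma}$ term:
\[
C\delta^{-\gamma}\Big(\fint_{B_\rho}u^{p\sigma}\Big)^{\frac1{p\sigma}}\le C\delta^{-\gamma}\Big(\frac r\rho\Big)^{\frac d{p\sigma}}\big(\esssup_{B_{\hat t}}u\big)^{1-\frac t{p\sigma}}\Big(\fint_{B_r}u^t\Big)^{\frac1{p\sigma}}\le \tfrac14\esssup_{B_{\hat t}}u + C\Big(\frac{r}{\rho}\Big)^{\frac{d}{t}}\Big(\fint_{B_r}u^t\Big)^{\frac1t}.
\]
Choose $\delta$ small so that the $\delta$-multiples of $\esssup_{B_{\hat t}}u$ from Step 1 sum to at most $\frac14$. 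This is possible because the $\rho$-dependent powers go into the $A(\hat t-\widetilde t)^{-\alpha}$ term of Lemma \ref{iteration1}; alternatively, take $\delta=\delta_0(\rho/r)^{\kappa}$ and track powers of $\rho$. This gives
\[
f(\widetilde t)\le \tfrac12 f(\hat t)+A(\hat t-\widetilde t)^{-\alpha}+B,\qquad f(\tau)=\esssup_{B_\tau}u .
\]
Here $B$ collects the $R$-tails times $(1-\log r)^{-1/(p-1)}(r/R)^{sp/(p-1)}$ and the $f$-term with $(1-\log r)^{-\sigma/(p\sigma-1)}$; note $\log\rho\approx\log r$ up to constants absorbed into $A$. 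Lemma \ref{iteration1} with $\widetilde t=r/2$, $\hat t=r$ then gives \eqref{wh-2.1}. For $t\ge p\sigma$ or $t$ near $p\sigma$, use Hölder directly.

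\textbf{Main obstacle.} The delicate point is that $\delta$ must be chosen depending on $\rho/r$ to absorb the $\esssup$ terms, which enter with factors $(r/\rho)^{(d+sp)/(p-1)}$ and $(1-\log r)^{\frac1{p-1}}(1-\log\rho)^{-\frac1{p-1}}$. The resulting $\delta^{-\gamma}$ blow-up then has to be polynomial in $r/\rho$ so that Lemma \ref{iteration1} applies. The $\log$ ratio is harmless because $\rho\le r\le 1/4$ makes $(1-\log r)/(1-\log\rho)\le 1$. I would first try the explicit choice $\delta=c(\rho/r)^{(d+sp)/(p-1)}$ and verify that every exponent stays polynomial.
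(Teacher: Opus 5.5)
Your proposal is correct and follows the paper's route: Proposition \ref{local-boundedness-I} at scale $\rho$, then Proposition \ref{Tail} to convert $\operatorname{Tail}_+(u_+)+\operatorname{Tail}_-(u_-)$ into a $\delta$-multiple of the supremum plus the $R$-tails and the $f$-term, then H\"older--Young interpolation of the $L^{p\sigma}$ average between $L^\infty$ and $L^t$, and finally Lemma \ref{iteration1}. You go beyond the paper in the covering step, which the paper only invokes with a fixed $\delta=\frac{1}{4C}$: your near/far splitting of the tails centred at $z\in B_{\widetilde t}$, with Proposition \ref{Tail} applied only at $x_0$ and radius $\hat t$, carries it out soundly, and the re-centring losses $(r/\rho)^{d/(p-1)}(1+\log\frac{r}{\rho})^{1/(p-1)}$ are what force a $\rho$-dependent $\delta$, which your choice $\delta=c(\rho/r)^{(d+sp)/(p-1)}$ handles with only polynomial blow-up of $\delta^{-\gamma}$ (your side remark about using H\"older for $t$ near $p\sigma$ is unnecessary and points the wrong way for $t<p\sigma$, since Young's inequality already covers all $t\in(0,p\sigma)$).
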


\begin{proof}
    For $\sigma$ given in Proposition \ref{local-boundedness-I}, we first observe that 
    \begin{align*}
        \left(sp-d+\frac{d}{\sigma}\right)\frac{\sigma}{p\sigma-1} := \frac{spq-d}{q(p-1)}.
    \end{align*}
   Let $\rho/2 \in (0,r)$. For a weak solution $u$, applying Proposition \ref{local-boundedness-I}, 
    \begin{align*}
    \underset{B_{\frac{\rho}{2}}(x_0)}{\esssup}\,u & \leq  \delta\left(1-\log(\frac{\rho}{2})\right)^{-\frac{1}{p-1}}\left( \mathrm{Tail}_+(u_+;x_0,\frac{\rho}{2}) + \mathrm{Tail}_-(u_-;x_0,\frac{\rho}{2}) \right) \\
    &\quad +C(d,s,p)\delta^{-\frac{(p-1)p_s^*}{p(p_s^*-p\sig)}}   \left(\fint_{B_{\rho}(x_0)}|u|^{p\sig}\dx\right)^{\frac1{p\sig}} \\
    & \quad + \left(1-\log(\frac{\rho}{2})\right)^{-\frac{\sigma}{p \sigma -1}} \rho^{\frac{spq-d}{q(p-1)}}\norm{f}_{L^q(B_{\rho}(x_0))}^{\frac{1}{p-1}},
\end{align*}
where $\delta\in(0,1]$. 
Now we observe that, since $1-\log(\frac{\rho}{2}) \ge 1$,
\begin{align*}
    \sigma >1 \Longleftrightarrow \frac{\sigma}{p\sigma -1} < \frac{1}{p-1} \Longrightarrow \left(1-\log(\frac{\rho}{2})\right)^{-\frac{\sigma}{p\sigma-1}} \ge \left(1-\log(\frac{\rho}{2})\right)^{-\frac{1}{p-1}}. 
\end{align*}
Hence the tail estimate (Proposition \ref{Tail}) yields
\begin{align*}
    &\underset{B_{\frac{\rho}{2}}}{\esssup}\,u\\
    & \leq  \delta\left(\underset{B_{\rho}}{\esssup}\,u+\left(1-\log(\frac{\rho}{2})\right)^{-\frac{1}{p-1}}\left(\frac{\rho}{R}\right)^{\frac{sp}{p-1}}\left[\operatorname{Tail}_{+}(u_-;x_0,R)+\operatorname{Tail}_{-}(u_+;x_0,R)\right] \right) \\
    &\quad + C\delta^{-\frac{(p-1)p_s^*}{p(p_s^*-p\sig)}}\left(\fint_{B_{\rho}}u^{p\sig}\dx\right)^{\frac1{p\sig}} + C \left[\left(1-\log(\frac{\rho}{2})\right)^{-\frac{1}{p-1}}+ \left(1-\log(\frac{\rho}{2})\right)^{-\frac{\sigma}{p \sigma -1}}\right] r^{\frac{spq-d}{q(p-1)}}\norm{f}_{L^q(B_{\rho})}^{\frac{1}{p-1}},\\
    &\leq \delta\left(\underset{B_{\rho}}{\esssup}\,u+(1-\log(r))^{-\frac{1}{p-1}}\left(\frac{\rho}{R}\right)^{\frac{sp}{p-1}}\left[\operatorname{Tail}_{+}(u_-;x_0,R)+\operatorname{Tail}_{-}(u_+;x_0,R)\right] \right) \\
    &\quad +C\delta^{-\frac{(p-1)p_s^*}{p(p_s^*-p\sig)}}\left(\fint_{B_{\rho}}u^{p\sig}\dx\right)^{\frac1{p\sig}} + C (1-\log(r))^{-\frac{\sigma}{p \sigma -1}} r^{\frac{spq-d}{q(p-1)}}\norm{f}_{L^q(B_{\rho})}^{\frac{1}{p-1}},
\end{align*}
where $C=C(d,s,p)$.   
We now set $\rho=(\eta-\eta')r,$ with $\frac12\leq\eta'<\eta\leq1$. By a covering argument, we obtain 
\begin{align*}
    &\underset{B_{\eta' r}}{\esssup}\,u \le C \Bigg( \frac{\delta^{-\frac{(p-1)p_s^*}{p(p_s^*-p\sig)}}}{(\eta-\eta')^{\frac{d}{\sigma}}} \left(\fint_{B_{\eta r}}u^{p\sig}\dx\right)^{\frac1{p\sig}} + \delta \,\underset{B_{\eta r}}{\esssup}\,u \\
    & \quad + \delta (1-\log(r))^{-\frac{1}{p-1}}\left(\frac{r}{R}\right)^{\frac{sp}{p-1}}\left[\operatorname{Tail}_{+}(u_-;x_0,R)+\operatorname{Tail}_{-}(u_+;x_0,R)\right] \\
    & \quad + (1-\log(r))^{-\frac{\sigma}{p \sigma -1}} r^{\frac{spq-d}{q(p-1)}}\norm{f}_{L^q(B_R)}^{\frac{1}{p-1}} \Bigg).
\end{align*}
where $C=C(d,s,p)$. For $t \in (0, p \sigma)$,
   \begin{align*}
       \left( \fint_{B_{\eta r}} u^{p \sigma} \dx \right)^{\frac{1}{p \sigma}} \le \left( \underset{B_{\eta r}}{\esssup}\,u \right)^{\frac{p \sigma -t}{p \sigma}} \left( \fint_{B_{\eta r}} u^{t} \dx \right)^{\frac{1}{p \sigma}}.
   \end{align*}
Choosing $\delta=\frac1{4C(d,s,p)}$ and applying Young's inequality with the conjugate pair $(\frac{p \sigma}{p \sigma -t}, \frac{p \sigma}{t})$, we get
\begin{align*}
    & \underset{B_{\eta'r}}{\esssup}\,u\leq \frac12\,\underset{B_{\eta r}}{\esssup}\,u+\frac{C}{(\eta-\eta')^{\frac{d}{t}}}\left(\fint_{B_{\eta r}}u^{t}\right)^{\frac1{t}} \\
    & + C(1-\log(r))^{-\frac{1}{p-1}} \left(\frac{r}{R}\right)^{\frac{sp}{p-1}}\left[\operatorname{Tail}_{+}(u_-;x_0,R)+\operatorname{Tail}_{-}(u_+;x_0,R)\right] \\
    & + C (1-\log(r))^{-\frac{\sigma}{p \sigma -1}} r^{\frac{spq-d}{q(p-1)}}\norm{f}_{L^q(B_R)}^{\frac{1}{p-1}},
\end{align*}
where $C=C(d,p,s)$. Now applying Lemma \ref{iteration1}, with $f(z) = \esssup_{B_{zr}} u, \hat{t} = \eta , \widetilde{t} = \eta', \alpha = \frac{d}{t}$, we obtain
\begin{align*}
    \underset{B_{\frac{r}{2}}}{\esssup}\,u & \leq C(d,s,p) \Bigg( (1-\log(r))^{-\frac{1}{p-1}}\left(\frac{r}{R}\right)^{\frac{sp}{p-1}}\left[\operatorname{Tail}_{+}(u_-;x_0,R)+\operatorname{Tail}_{-}(u_+;x_0,R)\right]\\
    & \quad + (1-\log(r))^{-\frac{\sigma}{p \sigma -1}} r^{\frac{spq-d}{q(p-1)}}\norm{f}_{L^q(B_R)}^{\frac{1}{p-1}} + \left( \fint_{B_r} u^t \right)^{\frac{1}{t}} \Bigg),
\end{align*}
for every $t \in (0, p \sigma)$, as required. 
\end{proof}

We now complete the proof of the Harnack inequality.

\noi \textbf{Proof of Theorem \ref{harnack_intro}:} For $\var$ as given in Proposition \ref{weakharnack-1}, we take $t =\var$ in Proposition \ref{weakharnack-2}. Then the required Harnack inequality is obtained by combining \eqref{wh-2.1} and \eqref{wh-1.1}. \qed

\section{H\"older continuity}\label{sec: Holder}
In this section, we complete the proof of H\"older continuity. Before stepping into the proof of H\"older estimate, we record a direct consequence of the logarithmic estimate (Lemma \ref{log-estimate}).
\begin{lemma}\label{lem: consequence of log estimate}
Let $0<R<\frac{1}{2}$ be such that $B_R(x_0)\subset \Omega$. 
Assume that $u$ is a weak solution to \eqref{main_PDE} satisfying $u \geq 0$ in $B_R(x_0)$. Let $a, t>0, b>1,$ and define
\begin{align*}
    v:=\min\left\{(\log(a+t)-\log(u+t))_{+}, \log (b)\right\}.
\end{align*}
Then for any $B_r(x_0)\subset B_{\frac{R}{2}}(x_0),$ there exists $C=C(d,p, s)$ such that the following holds:    
\begin{align*}
    &\fint_{B_r} |v-(v)_{B_r}|^p \dx\\
    &\leq \frac{C}{(-\log(2r))}\Big(t^{1-p} \left(\frac{r}{R}\right)^{sp} \left(\operatorname{Tail}_+\left(u_{-} ; x_0, R\right)^{p-1}+\operatorname{Tail}_-\left(u_{+} ; x_0, R\right)^{p-1}\right) \\
    &+ t^{1-p} r^{sp-\frac{d}{q}}\norm{f}_{L^{q}(B_{\frac{3r}{2}})}
    + 1 - \log(r)\Big).
\end{align*}
\end{lemma}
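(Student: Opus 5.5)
The plan is to chain three ingredients: the fractional logarithmic Poincar\'e inequality (Proposition \ref{poincare}), the fact that $v$ is a truncation of a logarithm (so its energy is controlled by that of $\log(u+t)$), and the logarithmic estimate (Lemma \ref{log-estimate}).

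First, since $r<R/2<\frac14$, we have $2r<1$, so $(-\log(2r))_+=-\log(2r)>0$. By Proposition \ref{poincare} applied to $v$ on $B_r$,
\begin{align*}
\fint_{B_r}|v-(v)_{B_r}|^p\dx \le C(d,s,p)\,\frac{r^{sp-d}}{-\log(2r)}\iint_{B_r\times B_r}\Kp(|x-y|)\,|v(x)-v(y)|^p\dxy .
\end{align*}
Here $v\in W^{s+\log,p}(B_r)$ because it is a bounded Lipschitz function of $\log(u+t)$, and $u+t\ge t>0$ on $B_R$.

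Next, $v$ is obtained from $\log(a+t)-\log(u+t)$ by composing with the $1$-Lipschitz map $\tau\mapsto\min\{\tau_+,\log b\}$. Hence
\[
|v(x)-v(y)|\le\left|\log\frac{u(x)+t}{u(y)+t}\right| \quad \text{for } x,y\in B_r .
\]
For $x,y\in B_r$ we have $|x-y|<2r<1$, so $\Kp\le\frac1p\K$ there. Indeed, $\K = C_{d,s,p}B_{d,s,p}r^{-d-sp}+p\Kp$ on $(0,1)$, and $B_{d,s,p}\ge0$ for $s<s_0$. Therefore the double integral above is bounded by a constant times $\iint_{B_r\times B_r}\K(|x-y|)\,|\log\frac{u(x)+t}{u(y)+t}|^p\dxy$.

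Finally, $B_r\subset B_{R/2}$, $u\ge0$ in $B_R$, and $\operatorname{diam}(B_R)\le 1$ holds since $R<\frac12$. These are exactly the hypotheses of Lemma \ref{log-estimate}, which bounds the last integral by
\[
C r^{d-sp}\Big(t^{1-p}(r/R)^{sp}[\operatorname{Tail}_+(u_-)^{p-1}+\operatorname{Tail}_-(u_+)^{p-1}]+t^{1-p}r^{sp-d/q}\|f\|_{L^q}+1-\log r\Big).
\]
Multiplying by $r^{sp-d}/(-\log 2r)$ cancels the powers $r^{d-sp}$ and gives the claim.

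One bookkeeping point needs care. Lemma \ref{log-estimate} as stated has $\|f\|_{L^q(B_R)}$, but its proof only integrates $f$ against $\phi^p$ with $\phi$ supported in $B_{3r/2}$. So the norm on $B_{3r/2}$ appears, as in the claim. There is no genuine obstacle; the main thing to check is the admissibility of $v$ in Proposition \ref{poincare} and the sign condition $\K\ge p\Kp$ on $B_r\times B_r$.
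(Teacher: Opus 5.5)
Your proposal is correct and follows the paper's proof. The paper also applies Proposition \ref{poincare} on $B_r$, bounds the truncated logarithm's energy using $\Kp\le\K$ on $B_r\times B_r$, and then invokes Lemma \ref{log-estimate}. Your extra remark on the sign condition and on why $\norm{f}_{L^q(B_{\frac{3r}{2}})}$ rather than $\norm{f}_{L^q(B_R)}$ appears (its proof only integrates $f$ over $B_{\frac{3r}{2}}$) fills in details the paper leaves implicit.
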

\begin{proof}
From Proposition \ref{poincare}, we have
\begin{align*}
  \fint_{B_r}|v-(v)_{B_r}|^{p}\dx \leq C(d,s,p)\frac{r^{sp-d}}{(-\log(2r))}\iint_{B_r \times B_r}\mathcal{K}^{s+\log,}_+(|x-y|)|v(x)-v(y)|^p\dxy.   
\end{align*}
By definition of $v$ and using $\Kp\leq \K$ in $B_r,$ we also have
\begin{align*} 
\iint_{B_r \times B_r}\mathcal{K}^{s+\log,}_+(|x-y|)|v(x)-v(y)|^p\dxy \leq \iint_{B_r \times B_r} \K(|x-y|)\left|\log\left(\frac{u(y)+t}{u(x)+t}\right)\right|^p\dxy.
\end{align*}
Now, the desired inequality follows applying Lemma \ref{log-estimate}.
\end{proof}
Now, let us fix some notation. For any $j \in \N,$ let $0<r<\frac{R}{2},$ for some $R>0$ such that $B_R(x_0)\subset \Omega.$  Also, let
\begin{align*}
    r_j:=\gamma^{j} \frac{r}{2}, \quad \gamma, r \in \left(0, 1/4\right], \quad B_j:=B_{r_j}(x_0).
\end{align*}
Furthermore, we define
\begin{align*}
    \frac{1}{2}\omega(r_0)=\frac{1}{2}\omega\left(\frac{r}{2}\right):=C(1-\log r)^{-\frac{1}{p-1}}\Big[\operatorname{Tail}_{+}(u; x_0, \frac{r}{2})&+\operatorname{Tail}_{-}(u; x_0, \frac{r}{2})\Big]+C\left(\fint_{B_r(x_0)}|u|^{p\sigma}\dx\right)^{\frac{1}{p\sigma}}\\
    &+C(1-\log r)^{-\frac{\sigma}{p\sigma-1}}r^{\frac{1}{p-1}(sp-\frac{d}{q})}\norm{f}_{L^q(B_{\frac{3r}{2}}(x_0))}^{\frac{1}{p-1}},
\end{align*}
and 
\begin{align*}
    \omega(r_j):=\left(\frac{r_j}{r_0}\right)^{\alpha}\omega\left(r_0\right),  \text{ for some } \alpha<\frac{sp-\frac{d}{q}}{p-1}.
\end{align*}
Note that $\omega(r_0)$ is precisely the right hand side of the local boundedness estimate obtained in Proposition \ref{local-boundedness-I} and the substitution of $\sigma$ gives the exact power of $r.$ 

Thus, we have 
\begin{align}\label{Eq 5.1}
\frac{1}{2}\omega(r_0)\geq C\frac{\operatorname{Tail}_{\mathrm{mod}}(u; x_0,\frac{r}{2})}{ (1-\log r)^{\frac{1}{p-1}}}+C\left(\fint_{B_r(x_0)}|u|^{p\sigma}\dx\right)^{\frac{1}{p\sigma}}+C\frac{r^{\frac{1}{p-1}(sp-\frac{d}{q})}\norm{f}_{L^q(B_{\frac{3r}{2}}(x_0))}^{\frac{1}{p-1}}}{(1-\log r)^{\frac{\sigma}{p\sigma-1}}}.
\end{align}
The following theorem is key to proving the H\"older estimate.
\begin{theorem}[Oscillation estimate]
Grant the above setting and let $u$ be the weak solution to  \eqref{main_PDE}. Then we have the following oscillation control:
\begin{align}\label{osc estimate}
\mathop{\mathrm{osc}}\limits_{B_j} u\equiv \sup_{B_j} u-\inf_{B_j} u \leq \omega (r_j), \text{ for all } j=0, 1, 2....    
\end{align}
\end{theorem}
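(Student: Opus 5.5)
We argue by strong induction on $j$, following the scheme of Di Castro--Kuusi--Palatucci. For $j=0$, Proposition~\ref{local-boundedness-I} applied to $u$ and to $-u$ (both are subsolutions since $u$ is a solution) with $\delta=1$ gives $\sup_{B_0}|u|\le \frac12\omega(r_0)$. Here we use $\operatorname{Tail}_\pm(u_\pm)\le \operatorname{Tail}_\pm(u)$ together with the definition \eqref{Eq 5.1} of $\omega(r_0)$. Hence $\operatorname{osc}_{B_0}u\le\omega(r_0)$.

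Assume now that \eqref{osc estimate} holds for $i=0,\dots,j$. One of the two sets
\[
B_{2r_{j+1}}\cap\{u-\inf_{B_j}u\ge \omega(r_j)/2\},\qquad B_{2r_{j+1}}\cap\{\omega(r_j)-(u-\inf_{B_j}u)\ge\omega(r_j)/2\}
\]
has measure at least $\frac12|B_{2r_{j+1}}|$. Let $u_j$ denote the corresponding function, either $u-\inf_{B_j}u$ or $\omega(r_j)-(u-\inf_{B_j}u)$. Then $u_j$ is a solution of \eqref{main_PDE} with right-hand side $\pm f$, $u_j\ge0$ in $B_j$, and $u_j\le\omega(r_j)$ in $B_j$. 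Rather than Proposition~\ref{growth-lemma} alone, we use the logarithmic Lemma~\ref{lem: consequence of log estimate} in a De Giorgi iteration. Set $v=\min\{(\log\frac{\omega(r_j)/2+t}{u_j+t})_+,\log\frac1\varepsilon\}$ on $B_{2r_{j+1}}$, with $t=\varepsilon\,\omega(r_j)$.

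The key step is to bound the tail term
\[
t^{1-p}(r_{j+1}/r_j)^{sp}\bigl[\operatorname{Tail}_+((u_j)_-;x_0,r_j)^{p-1}+\operatorname{Tail}_-((u_j)_+;x_0,r_j)^{p-1}\bigr]
\]
by a constant multiple of $(1-\log r_{j+1})$. We split $\mathbb R^d\setminus B_j$ into the annuli $B_{i-1}\setminus B_i$ for $i\le j$, plus the exterior of $B_0$. On each annulus the induction hypothesis gives $|u_j|\le \omega(r_{i-1})+\omega(r_j)\le 2\omega(r_{i-1})$. On the exterior we use $|u_j|\le|u|+\omega(r_0)$ together with \eqref{Eq 5.1}. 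This yields
\[
\operatorname{Tail}_{\mathrm{mod}}(u_j;x_0,r_j)^{p-1}\lesssim \log\Bigl(1+\frac1{r_j}\Bigr)\sum_{i=1}^{j}\Bigl(\frac{r_j}{r_{i}}\Bigr)^{sp}\omega(r_{i-1})^{p-1}+(\cdots),
\]
and the sum is at most $C\omega(r_j)^{p-1}\gamma^{-\alpha(p-1)}\sum_i\gamma^{i(sp-\alpha(p-1))}$. This series is finite because $\alpha<\frac{sp-d/q}{p-1}<\frac{sp}{p-1}$; the geometric convergence is also the source of the range of $\alpha$. The $f$-term is handled similarly: $r_j^{sp-d/q}\|f\|\lesssim (r_j/r_0)^{sp-d/q}\omega(r_0)^{p-1}(\ldots)\le\omega(r_j)^{p-1}$, again using the restriction on $\alpha$. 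The logarithmic factors $(1-\log r)$ appearing on the left and inside $\omega$ have to match. We expect this matching to be the main obstacle, and the plan is to control it through the boundedness of $g(r)=(1-\log r)/(-\log 2r)$, exactly as in Proposition~\ref{local-boundedness-I}. Once this is done, Lemma~\ref{lem: consequence of log estimate} gives $\fint|v-(v)|^p\le C\varepsilon^{1-p}\gamma^{sp}(\ldots)+C$. Choosing $\gamma$ small in terms of $\varepsilon$ makes the right side bounded by a constant $C$, independent of $\varepsilon$.

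The lower-measure condition then gives, as in \eqref{expan7}, $|\{v=\log\frac1\varepsilon\}\cap B_{2r_{j+1}}|\le \frac{C}{\log(1/\varepsilon)}|B_{2r_{j+1}}|$, that is, $u_j\le 2\varepsilon\omega(r_j)$ on only a small fraction of the ball. We then run the De Giorgi iteration on $w=(k-u_j)_+$ with levels $k_i\downarrow\varepsilon\omega(r_j)$ on balls shrinking from $2r_{j+1}$ to $r_{j+1}$, as in Step 2 of Proposition~\ref{growth-lemma}. The Caccioppoli Lemma~\ref{energy estimate} and Proposition~\ref{poincare} provide the estimates, and the tails are bounded by $\omega(r_j)^{p-1}$ as above. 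Taking $\varepsilon$ small so that the initial measure fraction lies below the threshold of Lemma~\ref{iteration}, we obtain $u_j\ge\varepsilon\omega(r_j)$ on $B_{j+1}$.

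Consequently $\operatorname{osc}_{B_{j+1}}u\le(1-\varepsilon)\omega(r_j)$. Finally we fix $\gamma$ small so that $1-\varepsilon\le\gamma^\alpha$, which is possible after possibly decreasing $\alpha$, and this gives $\operatorname{osc}_{B_{j+1}}u\le\omega(r_{j+1})$, completing the induction.
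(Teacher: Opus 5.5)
Your plan follows the paper's proof essentially step by step. The shared steps are:
\begin{itemize}
\item induction, with the base case from Proposition~\ref{local-boundedness-I} at $\delta=1$;
\item the two alternatives on $2B_{j+1}$;
\item the annular splitting, which gives $[\operatorname{Tail}_{\mathrm{mod}}(u_j;x_0,r_j)]^{p-1}\lesssim\gamma^{-\alpha(p-1)}\log(1+\frac{1}{r_j})[\omega(r_j)]^{p-1}$ through the geometric series in $\gamma^{\beta}$ with $\beta=sp-\alpha(p-1)$;
\item Lemma~\ref{lem: consequence of log estimate} on $2B_{j+1}$, then a De Giorgi iteration on $(k_i-u_j)_+$, then a final smallness condition on $\alpha$.
\end{itemize}
Your decoupled choice ``$\gamma$ small in terms of $\varepsilon$'' is the paper's $\varepsilon=\gamma^{a}$ with $a<\frac{1}{p-1}\min\{\beta,sp-\frac{d}{q}\}$, only parametrized differently.

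One step fails as written. With $t=\varepsilon\,\omega(r_j)$ and truncation level $\log\frac{1}{\varepsilon}$, you get $\{v=\log\frac{1}{\varepsilon}\}=\{u_j\le\varepsilon\,\omega(r_j)(\varepsilon-\frac{1}{2})\}$. This set is null in $B_{2r_{j+1}}\subset B_j$, because $u_j\ge 0$ there in both alternatives. So your measure bound says nothing about $\{u_j\le 2\varepsilon\,\omega(r_j)\}$.

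The fix is to truncate at $k=\log\frac{\omega(r_j)/2+t}{3t}$, as in Di Castro--Kuusi--Palatucci. Then $\{v=k\}=\{u_j\le 2t\}$ exactly, and $k\ge\frac{1}{2}\log\frac{1}{\varepsilon}$ for $\varepsilon\le\frac{1}{36}$. With this change your argument goes through.

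In the last step, the parameter to adjust is $\alpha$, not $\gamma$: shrinking $\gamma$ makes $1-\varepsilon\le\gamma^{\alpha}$ harder to satisfy. The order of choices is $\varepsilon$ (from the De Giorgi threshold), then $\gamma=\gamma(\varepsilon)$, then $\alpha$. Decreasing $\alpha$ only increases $\beta$ and decreases $\gamma^{-\alpha(p-1)}$, so the earlier $\gamma$-dependent estimates are not disturbed.
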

\begin{proof}
We use an induction process. Note that, for $j=0$, the estimate \eqref{osc estimate} holds trivially by the local boundedness estimate \eqref{loc.bounded} with $\delta=1.$ Now, we assume that \eqref{osc estimate} holds for all $i \in \{0, 1, ..., j\}$ for some $j \geq 0,$ and then show that \eqref{osc estimate} holds for $j+1.$ We have the following two alternatives.
\begin{align}\label{1st alternative}
    \frac{\left|2B_{j+1}\cap \{u \geq \inf_{B_j} u+\omega(r_j)/2\}\right|}{|2B_{j+1}|}\geq \frac{1}{2}
\end{align}
or
\begin{align}\label{2nd alternative}
    \frac{\left|2B_{j+1}\cap \{u \leq \inf_{B_j} u+\omega(r_j)/2\}\right|}{|2B_{j+1}|}\geq \frac{1}{2}
\end{align}
If \eqref{1st alternative} holds, we set $u_j:=u- \inf_{B_j} u$ and if \eqref{2nd alternative} holds, we set $u_j:=\omega(r_j)-(u-\inf_{B_j}u).$ In both cases, $u_j \geq 0$ in $B_j$ and 
\begin{align*}
  \frac{\left|2B_{j+1}\cap \{u_j \geq \omega(r_j)/2\}\right|}{|2B_{j+1}|}\geq \frac{1}{2}   
\end{align*}
holds. Also, $u_j$ is a weak solution (of course the source term $f$ may change to $-f$, but the steps all involves $\norm{f}$ and hence it would not affect) with $\sup_{B_i}|u_j|\leq 2 \omega(r_i)$ for all $i \in \{0, 1, ...., j\}.$ We show that 
\begin{align*}
    \left[\operatorname{Tail_{\mathrm{mod}}}(u_j; x_0, r_j)\right]^{p-1}\leq C\gamma^{-\alpha (p-1)}\log \left(1+\frac{1}{r_j}\right)[\omega(r_j)]^{p-1}.
\end{align*}
Since $B_j \subset B_{j-1}....\subset B_1 \subset B_0,$ we split,
\begin{align}\label{eq: tail estimate-4 terms}
\left[\operatorname{Tail_{\mathrm{mod}}}(u_j; x_0, r_j)\right]^{p-1}&= r^{sp}_j \sum_{i=1}^j\int_{B_{i-1}\setminus B_i}\left(\frac{1}{|x-x_0|^{d+sp}}+ \mathcal{K}^{s+\log}_{+}(|x-x_0|)\right)|u_j(x)|^{p-1}\dx\no\\
 &+r^{sp}_j \int_{\rd\setminus B_0}\left(\frac{1}{|x-x_0|^{d+sp}}+ \mathcal{K}^{s+\log}_{+}(|x-x_0|)\right)|u_j(x)|^{p-1}\dx\no\\
 &+r^{sp}_j \sum_{i=1}^j\int_{B_{i-1}\setminus B_i}\mathcal{K}^{s+\log}_{-}(|x-x_0|)|u_j(x)|^{p-1}\dx\no\\
 &+r^{sp}_j\int_{\rd\setminus B_0}\mathcal{K}^{s+\log}_{-}(|x-x_0|)|u_j(x)|^{p-1}\dx.
\end{align}
We note that the third integral of \eqref{eq: tail estimate-4 terms} does not survive by the construction of $B_i$ and thus
\begin{align*}
r^{sp}_j \sum_{i=1}^j\int_{B_{i-1}\setminus B_i}\mathcal{K}^{s+\log}_{-}(|x-x_0|)|u_j(x)|^{p-1}\dx=0.    
\end{align*}
Let $\beta = sp-\alpha (p-1)>0.$ We estimate the first two integrals of \eqref{eq: tail estimate-4 terms}.
\begin{align*}
&r^{sp}_j \sum_{i=1}^j\int_{B_{i-1}\setminus B_i}\left(\frac{1}{|x-x_0|^{d+sp}}+ \mathcal{K}^{s+\log}_{+}(|x-x_0|)\right)|u_j(x)|^{p-1}\dx\\
&\quad+r^{sp}_j\int_{\rd\setminus B_0}\left(\frac{1}{|x-x_0|^{d+sp}}+ \mathcal{K}^{s+\log}_{+}(|x-x_0|)\right)|u_j(x)|^{p-1}\dx \\
&\leq r^{sp}_j\sum_{i=1}^j \left(\sup_{B_{i-1}}|u_j|\right)^{p-1}\big[1+(-\log r_j)_{+}\big]\int_{\rd\setminus B_i} \frac{\dx}{|x-x_0|^{d+sp}} \\
&\quad+C\,\left(\frac{r_j}{r_0}\right)^{sp}\left[\operatorname{Tail_{\mathrm{mod}}}(u;x_0,r_0)\right]^{p-1}\\
&\quad+C\,\big[1+(-\log r_0)_{+}\big]\left(\frac{r_j}{r_0}\right)^{sp}\Big(\sup_{B_0}|u|^{p-1}+[\omega(r_0)]^{p-1}\Big),
\end{align*}
where on $\rd\setminus B_0,$ we used $|u_j(x)|^{p-1}\le C\big(|u(x)|^{p-1}+\sup_{B_0}|u|^{p-1}+[\omega(r_0)]^{p-1}\big)$, the bound 
\begin{align*}
 \int_{\rd\setminus B_0}\left(\frac{1}{|x-x_0|^{d+sp}}+\mathcal{K}^{s+\log}_{+}(|x-x_0|)\right)|u(x)|^{p-1}\dx\leq r_0^{-sp}[\operatorname{Tail_{\mathrm{mod}}}(u;x_0,r_0)]^{p-1}
\end{align*}
and the estimate
\begin{align*}
\int_{\rd\setminus B_0}\left(\frac{1}{|x-x_0|^{d+sp}}+\mathcal{K}^{s+\log}_{+}(|x-x_0|)\right)\dx
&= \omega_{d-1}\int_1^\infty \tau^{-1-sp}\dtau + \omega_{d-1}\int_{r_0}^1 \tau^{-1-sp}\log\frac 1\tau\,\dtau\\
&\leq \frac{\omega_{d-1}}{sp}\,r_0^{-sp}\big(1+\log\frac{1}{r_0}\big).
\end{align*}
Now we estimate the fourth integral. Since $\mathcal{K}^{s+\log}_{-}$ is supported in $\{|x-x_0|>1\}\subset \rd\setminus B_0$ (because $r_0\le1$), we have
\begin{align*}
\int_{\rd\setminus B_0}\mathcal{K}^{s+\log}_{-}(|x-x_0|)\dx
=\omega_{d-1}\int_1^\infty \tau^{-1-sp}\log\tau\,\dtau
=\frac{\omega_{d-1}}{(sp)^2}\le C\,r_0^{-sp},
\end{align*}
and using the bound of $|u_j|^{p-1}$ as above, we obtain
\begin{align*}
r^{sp}_j\int_{\rd\setminus B_0}\mathcal{K}^{s+\log}_{-}(|x-x_0|)|u_j(x)|^{p-1}\dx
&\le C\left(\frac{r_j}{r_0}\right)^{sp}\left[\operatorname{Tail_{\mathrm{mod}}}(u;x_0,r_0)\right]^{p-1}\\
&\quad+C\left(\frac{r_j}{r_0}\right)^{sp}\Big(\sup_{B_0}|u|^{p-1}+[\omega(r_0)]^{p-1}\Big).
\end{align*}
 Thus, using \eqref{Eq 5.1}, we estimate the integrals over $\rd \setminus B_0$ as
\begin{align*}
&r^{sp}_j \int_{\rd\setminus B_0} \left(\frac{1}{|x-x_0|}+\mathcal{K}^{s+\log}_{\mathrm{mod}}(|x-x_0|)\right)|u_j(x)|^{p-1}\dx\\
&\leq C \left(\frac{r_j}{r_0}\right)^{sp}[\operatorname{Tail}_{\mathrm{mod}}(u; x_0, r_0)]^{p-1}+C\,\big[1+(-\log r_0)_{+}\big]\left(\frac{r_j}{r_0}\right)^{sp}\Big(\sup_{B_0}|u|^{p-1}+[\omega(r_0)]^{p-1}\Big)\\
&\leq C\,\big[1+(-\log r_0)_{+}\big]\left(\frac{r_j}{r_0}\right)^{sp}[\omega(r_0)]^{p-1}\leq C[1+(-\log r_j)_{+}]\left(\frac{r_j}{r_1}\right)[w(r_0)]^{p-1},
\end{align*}
where in the last line above, we use $r_0>r_1\ge r_j$, we have $(r_j/r_0)^{sp}\le (r_j/r_1)^{sp}$ and $1+(-\log r_0)_+\le 1+(-\log r_j)_+.$  Combining the above estimates and using 
\begin{align*}
\int_{\rd\setminus B_i}|x-x_0|^{-d-sp}\dx=\frac{\omega_{d-1}}{sp}\,r_i^{-sp}, \quad \text{and }\quad \sup_{B_0}|u_j|\leq 2\omega(r_0), 
\end{align*}
we arrive at
\begin{align*}
\left[\operatorname{Tail_{\mathrm{mod}}}(u_j;x_0,r_j)\right]^{p-1}
\le C\,\big[1+(-\log r_j)_{+}\big]\sum_{i=1}^j \left(\frac{r_j}{r_i}\right)^{sp}[\omega(r_{i-1})]^{p-1}.
\end{align*}
It remains to sum the geometric series. Inserting $[\omega(r_{i-1})]^{p-1}=[\omega(r_j)]^{p-1}(r_{i-1}/r_j)^{\alpha(p-1)}$ and $r_j/r_i=\gamma^{j-i}$,
\begin{align*}
\sum_{i=1}^j \left(\frac{r_j}{r_i}\right)^{sp}[\omega(r_{i-1})]^{p-1}
&=[\omega(r_j)]^{p-1}\sum_{i=1}^j \left(\frac{r_{i-1}}{r_i}\right)^{\alpha(p-1)}\left(\frac{r_j}{r_i}\right)^{sp-\alpha(p-1)}\\
&=\gamma^{-\alpha(p-1)}[\omega(r_j)]^{p-1}\sum_{i=1}^j \gamma^{(j-i)\beta}\\
&\le \frac{\gamma^{-\alpha(p-1)}}{1-\gamma^{\beta}}[\omega(r_j)]^{p-1}
\le \frac{4^{\beta}}{\beta\log 4}\,\gamma^{-\alpha(p-1)}[\omega(r_j)]^{p-1},
\end{align*}
where in the last step we used $\gamma\le\frac14$ and $4^\beta-1\ge\beta\log4$, so that $\frac{1}{1-\gamma^\beta}\le\frac{1}{1-4^{-\beta}}\le\frac{4^\beta}{\beta\log4}$. Hence,
\begin{align*}
\left[\operatorname{Tail_{\mathrm{mod}}}(u_j;x_0,r_j)\right]^{p-1}
\le C\,\gamma^{-\alpha(p-1)}\big[1+(-\log r_j)_{+}\big][\omega(r_j)]^{p-1},
\end{align*}
and since $r_j\le1$ gives $1+(-\log r_j)_+ = 1+\log\frac1{r_j}\le 2\log\!\big(1+\frac1{r_j}\big)$, we conclude
\begin{align*}
\left[\operatorname{Tail_{\mathrm{mod}}}(u_j; x_0, r_j)\right]^{p-1}\leq C\,\gamma^{-\alpha (p-1)}\log\left(1+\frac{1}{r_j}\right)[\omega(r_j)]^{p-1},
\end{align*}
with $C=C(d,p,s, \beta)$, independent of $\gamma$ and $j$.

For some $k >0,$ we consider the function $v$ defined as
\begin{align*}
    v(x):=\min\left\{\left[\log(\frac{\omega(r_j)}{2+t})-\log(u_j(x)+t)\right]_{+}, k \right\}
\end{align*}
Applying Lemma \ref{lem: consequence of log estimate} with this function $v,$ we get
\begin{align}\label{Eq 5.6}
    &\fint_{2B_{j+1}}|v-(v)_{2B_{j+1}}|^p \dx \no \\
    &\leq \frac{C}{(-\log(4r_{j+1}))_{+}}\Big(t^{1-p} \left(\frac{r_{j+1}}{r_j}\right)^{sp} \left(\operatorname{Tail}_+\left((u_j)_{-} ; x_0, r_j\right)^{p-1}+\operatorname{Tail}_-\left((u_j)_{+} ; x_0, r_j\right)^{p-1}\right) \no \\
    &+ t^{1-p}r_{j+1}^{sp-\frac{d}{q}}\norm{f}_{L^{q}(B_{3r_{j+1}})} + 1 - \log(r_{j+1})\Big)\no \\
    &\leq C \left(t^{1-p}\left(\frac{r_{j+1}}{r_j}\right)^{sp} \frac{[\operatorname{Tail}_{\mathrm{mod}}(u_j; x_0, r_j)]^{p-1}}{(-\log(4r_{j+1}))_{+}}+\frac{t^{1-p}r_{j+1}^{sp-\frac{d}{q}}\norm{f}_{L^{q}(B_{3r_{j+1}})}}{(-\log(4r_{j+1}))_{+}}+\frac{1-\log(r_{j+1})}{(-\log(4r_{j+1}))_{+}}\right)\no \\
 &\leq C \left(t^{1-p}\left(\frac{r_{j+1}}{r_j}\right)^{sp} \gamma^{-\alpha (p-1)} [\omega(r_{j})]^{p-1}+{\gamma^{\,sp-\frac dq}}\;t^{1-p}[\omega(r_j)]^{p-1}+1+\frac{\log (4e)}{\log 8} \right),
\end{align}
where in the last step, we used, for $r_{j+1}\leq \frac{1}{32},$
\begin{align*} 
\frac{1-\log(r_{j+1})}{-\log(4r_{j+1})}=\frac{1-\log(r_{j+1})}{-\log(r_{j+1})-2\log 2}=1+\frac{1+2\log 2}{\log \frac{1}{4r_{j+1}}} \leq 1+\frac{1+2\log 2}{3\log 2}=1+\frac{\log (4e)}{\log 8}
\end{align*}
 and, since $r_j\leq \frac18$ implies 
$\log\big(1+\frac{1}{r_j}\big)\leq \log\frac{2}{r_j}
=\log 2+\log\frac{1}{r_j}\leq \frac{4}{3}\log\frac{1}{r_j}$, 
while $\gamma\leq\frac14$ gives 
$-\log(4r_{j+1})=\log\frac{1}{r_j}+\log\frac{1}{4\gamma}\geq \log\frac{1}{r_j}$,
\begin{align*}
\frac{\left[\operatorname{Tail}_{\mathrm{mod}}(u_j; x_0, r_j)\right]^{p-1}}{(-\log(4r_{j+1}))_{+}}
&\leq C\,\gamma^{-\alpha (p-1)}\,
\frac{\log\big(1+\frac{1}{r_j}\big)}{(-\log(4r_{j+1}))_{+}}\,[\omega(r_j)]^{p-1}\\
&\leq C\,\gamma^{-\alpha (p-1)}\,
\frac{\frac{4}{3}\log\frac{1}{r_j}}{\log\frac{1}{r_j}+\log\frac{1}{4\gamma}}\,[\omega(r_j)]^{p-1}
\;\leq\; C\,\gamma^{-\alpha (p-1)}\,[\omega(r_j)]^{p-1}.
\end{align*}
Moreover, Since $r_{j+1}\le \frac14\cdot\frac r2$, we have $3 r_{j+1}=\frac{3}{2}\gamma^{j+1}r\le\frac{3r}{8}<\frac{3r}{2}$, so $B_{3r_{j+1}}\subset B_{3\frac{r}{2}}$ and hence
\begin{align*}
\norm{f}_{L^q(B_{3r_{j+1}})}\le\norm{f}_{L^q(B_{3\frac{r}{2}})}.
\end{align*}
From \eqref{Eq 5.1}, we have $(1-\log r)^{\frac{\sigma}{p\sigma-1}}\omega(r_0)\ge C\,r^{\frac{sp-\frac{d}{q}}{p-1}}\norm{f}_{L^q(B_{3\frac{r}{2}})}^{\frac{1}{p-1}}$ which gives,
\begin{align*}
\norm{f}_{L^q(B_{3\frac{r}{2}})}\le C\,r^{-\left(sp-\frac dq\right)}\,(1-\log r)^{\frac{\sigma(p-1)}{p\sigma -1}}[\omega(r_0)]^{p-1} \leq C\,r^{-\left(sp-\frac dq\right)}\,(1-\log r)[\omega(r_0)]^{p-1}
\end{align*}
where in the last inequality we use $\frac{\sigma(p-1)}{p\sigma-1}<1.$
Therefore, using $[\omega(r_0)]^{p-1}=\gamma^{-j\alpha(p-1)}[\omega(r_j)]^{p-1}$ and $\left(\frac{r_{j+1}}{r}\right)^{sp-\frac dq}=\left(\frac{\gamma^{j+1}}{2}\right)^{sp-\frac dq}$, we get
\begin{align*}
\frac{t^{1-p}r_{j+1}^{sp-\frac dq}\norm{f}_{L^q(B_{3r_{j+1}})}}{(-\log(4r_{j+1}))_+}
&\le C\frac{t^{1-p}\left(\frac{r_{j+1}}{r}\right)^{sp-\frac dq}(1-\log r)[\omega(r_0)]^{p-1}}{(-\log(4r_{j+1}))_+}\\
&=\frac{C}{2^{sp-\frac dq}}\frac{t^{1-p}[\omega(r_j)]^{p-1}\,\gamma^{(j+1)\left(sp-\frac dq\right)-j\alpha(p-1)}(1-\log r)}{(-\log(4r_{j+1}))_+}\\
&=\frac{C}{2^{sp-\frac dq}}\,\frac{t^{1-p}\,[\omega(r_j)]^{p-1}\,\gamma^{\left(sp-\frac dq\right)+j\left[\left(sp-\frac dq\right)-\alpha(p-1)\right]}(1-\log r)}{(-\log(4r_{j+1}))_+}.
\end{align*}
Since, $\alpha< \frac{sp-\frac{d}{q}}{p-1}$ and $(1-\log r )/ (-\log(4r_{j+1}))$ is bounded, we conclude
\begin{align}\label{eq: use in recursive inequality}
\frac{t^{1-p}\,r_{j+1}^{sp-\frac dq}\norm{f}_{L^q(B_{3r_{j+1}})}}{(-\log(4r_{j+1}))_+}
\le C\gamma^{\,sp-\frac dq}t^{1-p}[\omega(r_j)]^{p-1}.
\end{align}
Now, choosing $t=\gamma^{a}\omega(r_j)$ with $0< a<\frac{1}{p-1}\min\left\{\beta,\;sp-\frac dq\right\}$ in \eqref{Eq 5.6}, we get
\begin{align*}
\fint_{2B_{j+1}}|v-(v)_{2B_{j+1}}|^p\dx &\le C \left(\gamma^{\beta-a(p-1)}+\gamma^{\,sp-\frac dq-a(p-1)}+1+\frac{\log (4e)}{\log 8} \right)\\
&\leq C\left(3+\frac{\log(4e)}{\log8}\right)=:C_0
\end{align*}
where $C_0$ is independent of $\gamma$ and $j.$

Next, we denote $\widetilde B\equiv 2B_{j+1}$ and follow the steps of \cite[pp. 1296]{Palatucci2016} verbatim to arrive at
\begin{align*}
    \frac{\left|\widetilde B \cap \left\{u_j \leq 2 \varepsilon \omega (r_j)\right\}\right|}{|\widetilde B|} \leq \frac{\widetilde{C_1}}{\log (1/\gamma)},
\end{align*}
where we set $\varepsilon := \gamma^a.$

We are now in a position to start the iteration procedure, which will end up showing the oscillation estimate given in \eqref{osc estimate}. For any $i=0,1,2,...$, we define
\begin{align*} 
\varrho_i=(1+2^{-i})r_{j+1},\quad \widetilde{\varrho}_i := \frac{\varrho_{i} +\varrho_{i+1}}{2}, \quad  B^i=B_{\varrho_i}, \quad \widetilde B^i=B_{\widetilde \varrho_i}
\end{align*}
and corresponding cut-off functions;
\begin{align*}
\phi_i\in \mathcal{C}_0^\infty(\widetilde B^i), \,\,\, 0\leq \phi_i\leq 1,\,\,\,  \phi_i\equiv 1\text{ on } B^{i+1}, \,\,\,\text{and} \,\,\, |D\phi_i|<C\varrho_i^{-1}.
\end{align*}
Also,  we set
\begin{align*} 
k_i=(1+2^{-i})\varepsilon\,\omega(r_j),\quad \text{and}\quad  w_i=k_i-u_j,\quad  \overline{w}_i := (k_i-u_j)_+,\quad \underline{w}_i := (k_i-u_j)_-
\end{align*}
and
\begin{align*}     
A_i=\frac{|B^{i}\cap \{u_j \leq k_i\}|}{|B^{i}|} = \frac{|B^{i}\cap \{w_i > 0\}|}{|B^{i}|}.
\end{align*}
First, we denote the energy as
\begin{align*}
 \mathcal E_{i} (\overline{w}_i, \phi_i )= \iint_{B^i \times B^i} \K(|x-y|) |\overline{w}_{i}(x)\phi_i(x) - \overline{w}_{i}(y)\phi_i(y)|^p \dxy,
\end{align*}
and recall the Caccioppoli inequality from Lemma \ref{energy estimate},
\begin{align}\label{cacc-2}
\mathcal E_i (\overline{w}_i, \phi_i) &\le C \int_{B^i} |f(x)| \overline{w}_i(x) \phi_i(x)^{p} \dx\no \\
     &+  C \iint_{B^i \times B^i} \K(|x-y|) \left(\max \{ \overline{w}_{i}(x) , \overline{w}_{i}(y) \}\right)^p |\phi_i(x) - \phi_i(y)|^p \dxy \no \\
     & + C \left( \underset{x \in \text{supp}(\phi)}{\esssup}\, \int_{\rd \setminus B^i}       \left( \frac{1}{|x-y|^{d+sp}} + \Kp(|x-y|)\right) \overline{w}_i(y)^{p-1}
     \dy \right) \int_{B^i} \overline{w}_i(x)\phi_i(x)^{p} \dx \no \\
     & + C \left( \underset{x \in \text{supp}(\phi)}{\esssup}\, \int_{\rd \setminus B^i}       \Km(|x-y|) \underline{w}_i(y)^{p-1}\dy \right) \int_{B^i} \overline{w}_i(x)\phi_i(x)^{p} \dx \no \\
     & + C\left( \underset{x \in \text{supp}(\phi)}{\esssup}\, \int_{\rd \setminus B^i} \Km(|x-y|) 
       \dy \right) \int_{B^i} \overline{w}_i(x)^p\phi_i(x)^{p} \dx\no\\
       &=\mathrm{I}+\mathrm{II}+\mathrm{III}+\mathrm{IV}+\mathrm{V}.
 \end{align}
 We divide our proof into four steps. Before we begin, let us collect some observations. Recalling that $t:=\varepsilon\,\omega(r_j)$, we have $k_i\in(t,2t]$, $k_i-k_{i+1}=2^{-(i+1)}t$, and $\varrho_i\in(r_{j+1},2r_{j+1}]$. Note that
\begin{align}\label{eq:ball inclusions}
2r_{j+1}=2\gamma\,r_j\le \frac12 r_j<r_j,\qquad\text{and hence}\qquad B_{r_{j+1}}\subset B^i\subset B_{2r_{j+1}}\subset B_{r_j}\subset B_R .
\end{align}
Since $u_j\ge0$ on each $B^j$, hence $u_j\geq 0$ on each $B^i,$ and 
\begin{equation}\label{eq:wbound}
0\le w_i\le k_i\le 2t\quad\text{on } B^i\cap \{u_j \leq k_i\},\qquad\text{and}\qquad
w_i\le 2t+|u_j|\quad\text{on }\rd .
\end{equation}
 Note that on $B^i\cap \{u_j \leq k_i\}$, we have $\overline{w}_i=w_i.$ One can also see that, with $i=0$ one has $B^0=B_{2r_{j+1}}=2B_{j+1}$ and $k_0=2t$, and this gives
\begin{equation}\label{eq:A0}
A_0\ \le\ \frac{\widetilde{C_1}}{\log(1/\gamma)}.
\end{equation}

\noindent \textbf{Step 1} {(lower bound of the energy):}  In this step, we show a lower bound of the energy in terms of $A_{i+1}.$ Since $u_j$ is a weak solution, and hence $\overline{w}_i\phi_i \in W^{s+\log, p}_0(\widetilde B^i)\subset W^{s+\log, p}(B^{i}).$ Hence, using Proposition \ref{poincare} together with $2\varrho_i<1,$ we have
\begin{align*}
   & \left(\fint_{B^i} |\overline{w}_i \phi_i(x)|^{p^*_s}\dx\right)^{\frac{p}{p^*_s}}\leq C\left(\fint_{B^{i}}|\overline{w}_i\phi_i(x)-(\overline{w}_i \phi_i)_{B^i}|^{p^*_s}\dx\right)^{\frac{p}{p^*_s}}+C|(\overline{w}_i \phi_i)_{B^i}|^p\\
    &\leq C\frac{\varrho^{sp-d}_i }{(-\log(2\varrho_i))_{+}}\iint_{B^i\times B^i}\Kp(|x-y|)|\overline{w}_i\phi_i(x)-\overline{w}_i\phi_i(y)|^p\dx\dy+C|(\overline{w}_i \phi_i)_{B^i}|^p\\
    &=C \frac{\varrho^{sp-d}_i}{-\log(2\varrho_i)} \mathcal{E}_i(\overline{w}_i, \phi_i)+C|(\overline{w}_i \phi_i)_{B^i}|^p.
\end{align*}
Next, we estimate the mean. Since $\overline{w}_i\phi_i\leq 2t$ on 
$B^i\cap\{u_j\leq k_i\}$ and $\overline{w}_i\phi_i=0$ on 
$B^i\cap\{u_j> k_i\}$, we have
\begin{align*}
\big|(\overline{w}_i\phi_i)_{B^i}\big|^p
=\left(\fint_{B^i}\overline{w}_i\phi_i(x)\dx\right)^p
=\left(\frac{1}{|B^i|}\int_{B^i\cap\{u_j\leq k_i\}}\overline{w}_i\phi_i(x)\dx\right)^p
\leq (2t)^p A_i^p \leq (2t)^p A_i.
\end{align*}

On the other hand, we note that $\overline{w}_i=(k_i-u_j)_{+}\geq k_i-k_{i+1}=2^{-(i+1)}t$ in $B^{i+1}\cap\{u_j \leq k_{i+1}\}.$ Hence, we have
\begin{align*}
    \frac{1}{|B^i|}\int_{B^i} |\overline{w}_i\phi_i(x)|^{p^*_s}\dx & \geq \frac{1}{|B^i|}\int_{B^{i+1}\cap\{u_j \leq k_{i+1}\}}\overline{w}_i(x)^{p^*_s}\dx \\
    &\geq \frac{\left|B^{i+1}\cap\{u_j \leq k_{i+1}\}\right|}{|B^{i+1}|}\cdot\frac{|B^{i+1}|}{|B^i|}(k_i-k_{i+1})^{p^*_s},
\end{align*}
and
\begin{align*}
\left(\fint_{B^i} |\overline{w}_i \phi_i(x)|^{p^*_s}\dx\right)^{\frac{p}{p^*_s}} & \geq (2^{-(i+1)}t)^p \left(\frac{\left|B^{i+1}\cap\{u_j 
\leq k_{i+1}\}\right|}{|B^{i+1}|}\right)^{\frac{p}{p^*_s}}\left(\frac{1}{2^d}\right)^{\frac{p}{p^*_s}}
\\&=(2^{-(i+1)}t)^p \left(A_{i+1}\right)^{\frac{p}{p^*_s}}\left(\frac{1}{2^d}\right)^{\frac{p}{p^*_s}}.
\end{align*}
Combining the above two inequalities, we get
\begin{align}\label{energy lower bound}
(2^{-(i+1)}t)^p \left(A_{i+1}\right)^{\frac{p}{p^*_s}}\frac{1}{2^d} \leq  C \frac{\varrho^{sp-d}_i}{-\log(2\varrho_i)} \mathcal{E}_i(\overline{w}_i, \phi_i)+C(2t)^p A_i.   
\end{align}
\\
\textbf{Step 2} {(upper bound of the energy):} In this step, we obtain an upper bound of the energy. To be more precise,  we estimate the right-hand side of the above Caccioppoli inequality \eqref{cacc-2} to derive an upper estimate for 
$\left(-\varrho^{sp-d}_i/\log(2\varrho_i)\right) \mathcal{E}_i(\overline{w}_i, \phi_i).$ which appears on the right-hand side of \eqref{energy lower bound}.

\noindent \textbf{Estimate of $\mathrm{I}:$} We estimate the term coming from the right-hand side of the equation simply using H\"older's inequality
\begin{align}\label{eq: 1st estimate of holder}
    \frac{\varrho^{sp-d}_i}{-\log(2\varrho_i)} \int_{B^i} |f(x)|\overline{w}_i(x)\phi_i(x)^p\dx&\leq C\frac{2t (r_{j+1})^{sp-d}}{-\log(4r_{j+1})} \int_{B^i\cap \{u_j\leq k_i\}}|f|\dx\no\\
    &\leq C\frac{2t (2r_{j+1})^{sp-d}}{-\log(4r_{j+1})}\norm{f}_{L^q(B^i)}\left(|B^i|A_{i}\right)^{\frac{q-1}{q}}\no\\
    &=C\frac{2t r^{sp-\frac{d}{q}}_{j+1}}{-\log(4r_{j+1})}\norm{f}_{L^q(B^i)}A^{\frac{q-1}{q}}_i.
\end{align}\\
\textbf{Estimate of $\mathrm{II} :$} We follow the estimate already derived in Proposition \ref{local-boundedness-I} with $|\phi_i(x)-\phi_i(y)|\leq C\frac{2^i}{r_{j+1}}|x-y|$ and obtain
\begin{align}\label{eq:2nd estimate of holder}
&\frac{\varrho^{sp-d}_i}{-\log(2\varrho_i)}\iint_{B^i \times B^i} \K(|x-y|) \left(\max \{ \overline{w}_{i}(x) , \overline{w}_{i}(y) \}\right)^p |\phi_i(x) - \phi_i(y)|^p \dxy\no \\
&\leq C(d, s, p)\frac{r^{sp}_{j+1}}{-\log(4r_{j+1})}\fint_{B^i}\int_{B^i} \K(|x-y|) \left(\max \{ \overline{w}_{i}(x) , \overline{w}_{i}(y) \}\right)^p |\phi_i(x) - \phi_i(y)|^p \dxy\no\\
&\leq C(d, s, p) 2^{ip}\fint_{B^i}w_i(y)^p\dy\leq C(d, s, p)2^{ip}\frac{1}{|B^i|}\int_{B^i\cap\{u_j\leq k_i\}} w_i(y)^p\dy \leq C(d, s, p)2^{ip}(2t)^p A_i,
\end{align}
where in the last step we used the first part of \eqref{eq:wbound}.\\
\textbf{Estimate of $\mathrm{III} :$} First, we note that, for $x\in\supp\phi_i\subset\widetilde B^i$ and $y\in\rd\setminus B^i$,
\begin{align}\label{eq:geom}
|x-y|\ \ge\ \varrho_i-\widetilde\varrho_i=\frac{\varrho_i-\varrho_{i+1}}2=2^{-(i+2)}r_{j+1},
\quad \text{and}\quad 
\frac{|x_0-y|}{|x-y|}\le 1+\frac{|x-x_0|}{|x-y|}\le 2^{\,i+4}.
\end{align}
We also note that
\begin{align}\label{eq:tail estimate-2}
    \fint_{B^i}\overline{w}_i(x)\phi_i(x)^p\dx \leq (2t) \frac{|B^i \cap \{u_j \leq k_i\}|}{|B^i|}=2t A_{i}.
\end{align}
Now we estimate
\begin{align}\label{eq:tail estimate-0}
 &\frac{r^{sp}_{j+1}}{-\log(4r_{j+1})}\left( \underset{x\in \widetilde B^i}{\esssup}\, \int_{\rd \setminus B^i}       \left( \frac{1}{|x-y|^{d+sp}} + \Kp(|x-y|)\right) \overline{w}_i(y)^{p-1}
     \dy \right) \fint_{B^i} \overline{w}_i(x)\phi_i(x)^{p} \dx\no\\
      &\leq 2t A_i \frac{r^{sp}_{j+1}}{-\log(4r_{j+1})}\left( \underset{x\in \widetilde B^i}{\esssup}\, \int_{\rd \setminus B^i}       \left( \frac{1}{|x-y|^{d+sp}} + \Kp(|x-y|)\right) \overline{w}_i(y)^{p-1}
     \dy \right)\no\\
     &\leq C\frac{2t A_i}{-\log(4r_{j+1})} (1+i)2^{i(d+sp)} \left[\operatorname{Tail}_{\mathrm{+}}(\overline{w}_i; x_0, r_{j+1})\right]^{p-1}.
\end{align}
In the last estimate above, we used the ball inclusions from \eqref{eq:ball inclusions} and the fact from \eqref{eq:geom} that all $y \in \rd\setminus B^i,$ we have
\begin{align*}
\frac{1}{|x-y|^{d+sp}}\leq \frac{2^{(i+4)(d+sp)}}{|x_0-y|^{d+sp}}, \quad \text{and}\quad (-\log|x-y|)_+\le (i+4)\log2+(-\log|x_0-y|)_+.
\end{align*}
Further, we split the domain $\rd\setminus B_{j+1}=(B_j\setminus B_{j+1})\cup(\rd\setminus B_j)$, and write
\begin{align}\label{eq:tail estimate-1}
&[\operatorname{Tail}_{+}(\overline{w}_i;x_0,r_{j+1})]^{p-1}\no\\
&= r_{j+1}^{sp}\int_{B_j\setminus B_{j+1}}\left(\frac{1}{|x_0-y|^{d+sp}}+\Kp(|x_0-y|)\right)\overline{w}_i^{\,p-1}(y)\dy\no\\
&+\Big(\frac{r_{j+1}}{r_j}\Big)^{sp}[\operatorname{Tail}_{+}(\overline{w}_i;x_0,r_j)]^{p-1}\no\\
&\leq C \left[(2t)^{p-1}(1-\log (r_{j+1}))+\left(\frac{r_{j+1}}{r_j}\right)^{sp}\left([\operatorname{Tail}_{\mathrm{mod}}(u_j; x_0, r_j)]^{p-1}+(2t)^{p-1}(1-\log r_j)\right)\right].
\end{align}
We elaborate on the estimate in \eqref{eq:tail estimate-1} for the reader's convenience. On $B_j\setminus B_{j+1}\subset B_j$ one has $\overline{w}_i\le 2t$. Using $(-\log|x_0-y|)_+\le-\log r_{j+1}$ for
$|x_0-y|\ge r_{j+1}$, we get 
\begin{align*}
&r_{j+1}^{sp}\int_{B_j\setminus B_{j+1}}\left(\frac{1}{|x_0-y|^{d+sp}}+\Kp(|x_0-y|)\right)\overline{w}_i^{\,p-1}(y)\dy\\
&\leq r^{sp}_{j+1}(2t)^{p-1} (1-\log(r_{j+1}))\int_{\rd\setminus B_{j+1}}\frac{\dy}{|x_0-y|^{d+sp}}=C(d, s, p)(2t)^{p-1}(1-\log (r_{j+1})).
\end{align*}
For the second part of the estimate in \eqref{eq:tail estimate-1}, we use \eqref{eq:wbound}, in particular, $\overline{w}_i\leq |u_j|+2t.$ Indeed, we get
\begin{align*} 
[\operatorname{Tail}_{+}(\overline{w}_i; x_0, r_j)]^{p-1} &\lesssim \operatorname{Tail}_{+}(u_j; x_0, r_j)]^{p-1}+ (2t)^{p-1}r^{sp}_j\int_{\rd\setminus B_j} \left(\frac{1}{|x_0-y|^{d+sp}}+\Kp(|x_0-y|)\right)\dy\\
&\lesssim \operatorname{Tail}_{\mathrm{mod}}(u_j; x_0, r_j)]^{p-1}+(2t)^{p-1}(1-\log(r_j)).
\end{align*}
Next, we use 
\begin{align}\label{eq:estimate for tail mod}
\left[\operatorname{Tail_{\mathrm{mod}}}(u_j; x_0, r_j)\right]^{p-1}\leq C\,\gamma^{-\alpha (p-1)}\log\left(1+\frac{1}{r_j}\right)[\omega(r_j)]^{p-1},
\end{align}
to have further control on the second part of \eqref{eq:tail estimate-1}. Indeed, we have
\begin{align*}
    &\left(\frac{r_{j+1}}{r_j}\right)^{sp}\left([\operatorname{Tail}_{\mathrm{mod}}(u_j; x_0, r_j)]^{p-1}+(2t)^{p-1}(1-\log r_j)\right)\\
    &\lesssim \gamma^{sp-\alpha(p-1)}\log\left(1+\frac{1}{r_j}\right)[\omega(r_j)]^{p-1}+\gamma^{sp}(2t)^{p-1}(1-\log r_j)\\
    &=\gamma^{\beta-a(p-1)}\log\Big(1+\frac1{r_j}\Big)t^{p-1}+\gamma^{sp}(2t)^{p-1}(1-\log r_j)\\
    &\leq 2 (2t)^{p-1}(1-\log r_j).
\end{align*}
In the last line of the above inequality, we used $\beta>a(p-1),$ $\gamma<1$ and $\frac{\log (1+1/r_j)}{1+\log (1/r_j)}\leq 2.$
Finally, substituting this estimate in \eqref{eq:tail estimate-1} and using $r_{j+1}\leq r_j,$ we obtain
\begin{align*}
 [\operatorname{Tail}_{+}(\overline{w}_i;x_0,r_{j+1})]^{p-1} \leq C (2t)^{p-1} (1-\log r_{j+1}).   
\end{align*}
Thus, from \eqref{eq:tail estimate-0}, we obtain
\begin{align}\label{eq:3rd estimate of holder}
&\frac{r^{sp}_{j+1}}{-\log(4r_{j+1})}\left( \underset{x\in \widetilde B^i}{\esssup}\, \int_{\rd \setminus B^i}       \left( \frac{1}{|x-y|^{d+sp}} + \Kp(|x-y|)\right) \overline{w}_i(y)^{p-1}
     \dy \right) \fint_{B^i} \overline{w}_i(x)\phi_i(x)^{p} \dx\no \\
     &\leq C\frac{2t A_i}{-\log(4r_{j+1})} (1+i)2^{i(d+sp)} (2t)^{p-1}(1-\log r_{j+1})=C(2t)^p 2^{i(d+sp+1)}A_i.
\end{align}
This completes the estimate of $\mathrm{III}.$

\noindent\textbf{Estimate of $\mathrm{IV} :$} Now we estimate the term, 
\begin{align*}  
\frac{\varrho^{sp-d}_i}{-\log(2\varrho_i)}\left( \underset{x \in \widetilde{B}^i}{\esssup}\, \int_{\rd \setminus B^i} \Km(|x-y|) \underline{w}_i(y)^{p-1}\dy \right) \int_{B^i} \overline{w}_i(x)\phi_i(x)^{p} \dx.
\end{align*}
Note that, in this case $\Km (|x-y|)\neq 0$ when $|x-y|\geq 1.$ Thus, using \eqref{eq:tail estimate-2}, we get
\begin{align*}
   & \frac{r_{j+1}^{sp-d}}{-\log(4r_{j+1})}\left( \underset{x \in \widetilde{B}^i}{\esssup}\, \int_{\rd \setminus B_{1/2}}       \Km(|x-y|) \underline{w}_i(y)^{p-1}\dy \right) \int_{B^i} \overline{w}_i(x)\phi_i(x)^{p} \dx\\
    &\leq \frac{r_{j+1}^{sp}2t A_i}{-\log(4r_{j+1})}\left( \underset{x \in \widetilde{B}^i}{\esssup}\, \int_{\rd \setminus B_{1/2}}       \Km(|x-y|) (2t+|u_j|)^{p-1}\dy \right)\\
    &\leq C\frac{r^{sp}_{j+1}(2t)^p A_i}{-\log(4r_{j+1})}\left(\underset{x \in \widetilde{B}^i}{\esssup}\int_{\rd\setminus B_{1/2}}\Km(|x-y|)\dy\right)\\
    &+ \frac{r_{j+1}^{sp}2t A_i}{-\log(4r_{j+1})}\left( \underset{x \in \widetilde{B}^i}{\esssup}\, \int_{\rd \setminus B_{1/2}}       \Km(|x-y|)|u_j(y)|^{p-1}\dy \right)\\
    &\leq C \widetilde{C}\frac{(2t)^p A_i}{-\log(4r_{j+1})}+C\frac{r^{sp}_{j+1}r^{-sp}_j2t A_i}{-\log(4r_{j+1})} [\operatorname{Tail}_{\mathrm{mod}}(u_j; x_0, r_j)]^{p-1},
\end{align*}
where $\widetilde{C}$ stands for the finiteness of the integral (see Remark \ref{support-positive-kernel}). In addition, one needs to use the inequality $\frac{1}{C}|x_0-y|\leq |x-y|\leq C|x_0-y|$ for some positive constant $C$, independent of $i$. Now we use the tail estimate \eqref{eq:estimate for tail mod} and $r_{j+1}\leq r_j$ to obtain
\begin{align} \label{eq:4th estimate for holder}
&\frac{\varrho^{sp-d}_i}{-\log(2\varrho_i)}\left( \underset{x \in \widetilde{B}^i}{\esssup}\, \int_{\rd \setminus B^i}       \Km(|x-y|) \underline{w}_i(y)^{p-1}\dy \right) \int_{B^i} \overline{w}_i(x)\phi_i(x)^{p} \dx\no\\
&\leq C \widetilde{C}\frac{(2t)^p A_i}{-\log(4r_{j+1})}+C\frac{2t A_i}{-\log(4r_{j})} \gamma^{sp-\alpha (p-1)}\log\left(1+\frac{1}{r_j}\right)[\omega(r_j)]^{p-1}\no\\
&\leq C \widetilde{C}\frac{(2t)^p A_i}{-\log(4r_{j+1})}+ C (2t)^{p}\gamma^{\beta-a(p-1)}\frac{\log(1+\frac{1}{r_j})}{\log \frac{1}{4r_j}}  A_i\leq C(d, s, p) (2t)^p A_i.
\end{align}

\noi \textbf{Estimate of $\mathrm{V}:$} From the estimates of the previous step, we get
\begin{align*}
\left( \underset{x \in \text{supp}(\phi)}{\esssup}\, \int_{\rd \setminus B^i} \Km(|x-y|) \dy \right) \lesssim R^{-sp},   
\end{align*}
and from \eqref{eq:tail estimate-2}, we also have
\begin{align*}
    \fint_{B^i}\overline{w}^p_i(x)\phi_i(x)^p\dx \leq (2t )^pA_{i}.
\end{align*}
Therefore, we estimate
\begin{align}\label{eq:5th estimate for holder}
   &\frac{\varrho^{sp}_i}{-\log(2\varrho_i)} \left( \underset{x \in \text{supp}(\phi)}{\esssup}\, \int_{\rd \setminus B^i} \Km(|x-y|) 
       \dy \right) \fint_{B^i} \overline{w}_i(x)^p\phi_i(x)^{p} \dx\no\\
       &\lesssim R^{sp}R^{-sp} (2t)^p A_i.
\end{align}

\noi \textbf{Step 3} {(recursive inequality)}: Combining the lower bound \eqref{energy lower bound} of the energy from Step 1 and the upper bounds \eqref{eq: 1st estimate of holder}, \eqref{eq:2nd estimate of holder}, \eqref{eq:3rd estimate of holder}, \eqref{eq:4th estimate for holder}, \eqref{eq:5th estimate for holder} of the energy from Step 2, we arrive at the recursive inequality,
\begin{align*}
(2^{-(i+1)}t)^p \left(A_{i+1}\right)^{\frac{p}{p^*_s}}\frac{1}{2^d} \leq C 2^{ip}2^{i(d+sp+1)}(2t)^{p}A_i+ C\frac{2t r^{sp-\frac{d}{q}}_{j+1}}{-\log(4r_{j+1})}\norm{f}_{L^q(B^i)}A^{\frac{q-1}{q}}_i.
\end{align*}
Now diving by $t^p$, using \eqref{eq: use in recursive inequality} and $A_i\leq 1,$ we get
\begin{align*} 
 \left(A_{i+1}\right)^{\frac{p}{p^*_s}} &\leq C 2^{i(d+sp+2p+1)}A_i+ C\frac{2t^{1-p} r^{sp-\frac{d}{q}}_{j+1}}{-\log(4r_{j+1})}\norm{f}_{L^q(B^i)}A^{\frac{q-1}{q}}_i\\
 &\leq C 2^{i(d+sp+2p+1)}A_i+ C{\gamma^{\,sp-\frac dq}}t^{1-p}[\omega(r_j)]^{p-1} A^{1-1/q}_i\\
& =C 2^{i(d+sp+2p+1)}A_i+ C\gamma^{sp-\frac dq-a(p-1)} A^{1-1/q}_i\leq C 2^{i(d+sp+2p)}A^{1-1/q}_{i}.
\end{align*}
This gives
\begin{align}\label{eq:recursive inequality}
    A_{i+1} \leq C^{\frac{p^*_s}{p}}2^{i(d+sp+2p+1)\frac{p^*_s}{p}}A_i^{\frac{(q-1)p^*_s}{qp}}.
\end{align}
The above recursive inequality \eqref{eq:recursive inequality} satisfies the statement of Lemma \ref{iteration} with
\begin{align*}
c_0= C^{\frac{p^*_s}{p}}\quad b=2^{\frac{(d+sp+2p+1)p^*_s}{p}}>1, \quad \delta= \frac{qsp-d}{q(d-sp)}>0, \quad\text{since}\quad sp>\frac{d}{q}.
\end{align*}
Also, from \eqref{eq:A0} we have
\begin{align*}
    A_0 \leq \frac{\widetilde{C_1}}{\log (1/\gamma)} \leq c^{-\frac{1}{\delta}}_0 b^{-\frac{1}{\delta^2}}=\nu_*.
\end{align*}
Thus, choosing $\gamma =\min \{1/4, e^{-\widetilde{C_1}/\nu_*}\},$ we conclude that $A_i\to 0$ as $i \to \infty.$

\noindent \textbf{Step 4} {(reduction of oscillation)}: Since $A_i \to 0,$ by definition
\begin{align*}
|B^{i}\cap \{u_j \leq k_i\}|\to |B_{r_{j+1}}(x_0)\cap \{u_j \leq t\}|=0.    
\end{align*}
Thus, $u_j(x) \geq \varepsilon w(r_j)$ for almost every $x \in B_{r_{j+1}}(x_0).$ If \eqref{1st alternative} holds, then $u_j=u-\inf_{B_j}u.$ In this case, using $B_{j+1}\subset B_j$ and the induction hypothesis $\mathop{\mathrm{osc}}\limits_{B_j} u\leq \omega(r_j),$ we have
\begin{align*}
\mathop{\mathrm{osc}}\limits_{B_{j+1}}u=\sup_{B_{j+1}}u-\inf_{B_{j+1}}u\ \le\ \sup_{B_j}u-\inf_{B_j}u-\varepsilon\omega(r_j)\ \le\ (1-\varepsilon)\,\omega(r_j).    
\end{align*}

On the other hand, if \eqref{2nd alternative} holds, then $u_j=\omega(r_j)-u+\inf_{B_j} u.$ In this case
\begin{align*}
\mathop{\mathrm{osc}}\limits_{B_{j+1}}u=\sup_{B_{j+1}}u-\inf_{B_{j+1}}u \leq \omega(r_j)-\varepsilon \omega(r_j)+\inf_{B_j} u -\inf_{B_{j+1}}u \leq (1-\varepsilon) \omega(r_j).   
\end{align*}
Hence, in any case, we have shown 
\begin{align*}
    \mathop{\mathrm{osc}}\limits_{B_{j+1}} u \leq (1-\varepsilon)\omega(r_j)=(1-\varepsilon)\left(\frac{r_j}{r_{j+1}}\right)^{\alpha}\omega(r_{j+1})=(1-\gamma^a)\gamma^{-\alpha}\omega(r_{j+1})\leq \omega(r_{j+1}),
\end{align*}
when $(1-\gamma^a)\gamma^{-\alpha}\leq 1.$ This gives that, for sufficiently small fixed $a$ and $\gamma,$ $\alpha$ needs to satisfy the relation and 
this finishes the proof.
\end{proof}

We now complete the proof of H\"older regularity.

\noi \textbf{Proof of Theorem \ref{thm: holder estimate}:} Fix $B_{r}(x_0)\subset \Omega$ for some $0<r<\frac{1}{4}$ as before. By Theorem \ref{osc estimate}, we get
\begin{align*}
    \mathop{\mathrm{osc}}\limits_{B_j} u \leq \omega(r_j)=2^{\alpha}\left(\frac{r_j} {r}\right)^{\alpha}\omega\left(\frac{r}{2}\right).
\end{align*}
Thus, using $\operatorname{Tail}_{+}(u; x_0, \frac{r}{2})+\operatorname{Tail}_{-}(u; x_0, \frac{r}{2}) \leq C \operatorname{Tail}_{\mathrm{mod}}(u; x_0, \frac{r}{2}),$ for any $\rho \in (r_{j+1}, r_j),$ we get,
\begin{align*} 
  &\mathop{\mathrm{osc}}\limits_{B_{\rho}(x_0)} u \leq \mathop{\mathrm{osc}}\limits_{B_j} u \no\\
  &\leq C \left(\frac{\rho}{r}\right)^{\alpha} \Bigg[(1-\log r)^{-\frac{1}{p-1}}\Big[\operatorname{Tail}_{\mathrm{mod}}(u; x_0, \frac{r}{2})\Big]+C\left(\fint_{B_r(x_0)}|u|^{p\sigma}\dx\right)^{\frac{1}{p\sigma}}\no\\
    &+(1-\log r)^{-\frac{\sigma}{p\sigma-1}}r^{\frac{1}{p-1}(sp-\frac{d}{q})}\norm{f}_{L^q(B_{r}(x_0))}^{\frac{1}{p-1}} \Bigg], 
\end{align*}
which completes the proof.
\qed
\section{A counterexample}\label{sec: counterexample}
In this section, we assume $p=2$, $f\equiv 0$ and $d\geq 2.$  Moreover, we assume $u \in \mathcal{C}^{\alpha}_{\mathrm{loc}}(B_R(0))\cap \mathcal{C}(\overline{B_R(0)}).$ We want to mention that the continuity up to the boundary has not been proved yet. One needs to use barrier arguments and comparison principle similar to \cite{RosOtonSerra2014, Iannizzotto2016} and also see \cite[Section 5]{ChenWeth} for boundary continuity of logarithmic Laplacian. Since this does not fall under the scope of this paper, we do not pursue it here.  

Inspired by Kassmann \cite{Kassmann2007}, the aim of this section is to show that if a solution is supported outside of $B_R(0),$ then the tail term appearing in the Harnack inequality is unavoidable. We begin with a simple but useful observation.
\begin{lemma}\label{lem: 1st section 7}
For any $x\in B_R(0),$ with $\text{diam}(B_R(0))\le \min\{\widetilde{R}(d, s), 1\},$ we have
\begin{align}\label{comparison}
   - (-\Delta)^{s+\log}\mathbb{1}_{B^c_R(0)}(x)\geq - (-\Delta)^{s+\log}\mathbb{1}_{B^c_R(0)}(0).
\end{align}
\end{lemma}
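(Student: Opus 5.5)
For $x\in B_R(0)$ the function $\mathbb{1}_{B_R^c(0)}$ vanishes at $x$, so only the exterior contributes and the principal value is harmless. With $p=2$ the representation of $(-\Delta)^{s+\log}$ gives
\begin{align*}
-(-\Delta)^{s+\log}\mathbb{1}_{B^c_R(0)}(x)=\int_{B_R^c(0)}\K(|x-y|)\dy=:G(x),
\end{align*}
where $\K(r)=C_{d,s,2}(B_{d,s,2}-2\log r)r^{-d-2s}$. Since $\K$ changes sign at $r_*=e^{B_{d,s,2}/2}$, the integral is not obviously finite. I will rewrite it as a nonnegative radial profile integrated against a well-behaved function, using Remark \ref{support-positive-kernel} to guarantee absolute convergence. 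The claim \eqref{comparison} then reads $G(x)\ge G(0)$, i.e.\ $G$ attains its minimum over $B_R(0)$ at the center.

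The plan has three steps.

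\textbf{Step 1: reduce to a radial problem.} By rotation invariance, $G(x)=g(|x|)$. It therefore suffices to show that $g$ is nondecreasing on $[0,R)$. For $d\ge 2$, I would write
\begin{align*}
G(x)=\int_{\R^d}\K(|z|)\,\mathbb{1}_{\{|x+z|\ge R\}}\dz=\int_0^\infty \K(\rho)\,\rho^{d-1}\,\mathcal{H}^{d-1}\big(\{\theta\in S^{d-1}:|x+\rho\theta|\ge R\}\big)\,d\rho .
\end{align*}
Set $m(\rho,|x|):=\mathcal H^{d-1}(\{\theta:|x+\rho\theta|\ge R\})$. This function equals $0$ for $\rho\le R-|x|$ and equals $\omega_{d-1}$ for $\rho\ge R+|x|$. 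In between it is explicit, given as a spherical cap measure.

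\textbf{Step 2: split at $\rho=2R\le 1$.} On the range $\rho\ge R+|x|$ the measure $m$ is independent of $x$. So $G(x)-G(0)$ only involves $\rho\in(R-|x|,R+|x|)\subset(0,2R)$. Because $\text{diam}(B_R)\le 1$, we have $2R\le 1<r_*$ once $B_{d,s,2}\ge0$ (true for $s<s_0$). Hence on this range $\K(\rho)>0$ and $\K$ is strictly decreasing, since $-\log\rho$ and $\rho^{-d-2s}$ both decrease. This is the key use of the smallness assumption: the sign change of the kernel happens only at $\rho\ge1$, where the integrand does not depend on $x$. That is why the tail ($\Km$) part drops out of the comparison.

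\textbf{Step 3: compare with the center.} We need
\begin{align*}
\int_{R-|x|}^{R+|x|}\K(\rho)\rho^{d-1}m(\rho,|x|)\,d\rho\;\ge\;\int_{R}^{R+|x|}\K(\rho)\rho^{d-1}\omega_{d-1}\,d\rho .
\end{align*}
I would argue geometrically, in the style of Kassmann's example. Reflect the region $B_R^c$ through the sphere $\partial B_R$ or by the symmetry $z\mapsto -z$ around $x$. The point $x$ is closer than $0$ to part of the boundary and farther from the opposite part. Pairing the "gained" region near $x$, at distances $<R$, with the "lost" region at distances $>R$ of equal or smaller measure, the decreasing kernel gives a nonnegative difference.

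Concretely, I would attempt a rearrangement or Hardy–Littlewood argument. The distribution function $\rho\mapsto\int_0^\rho m(t,|x|)t^{d-1}dt$ equals $|B_\rho(x)\cap B_R^c|$. This is $\ge |B_\rho(0)\cap B_R^c|$ for every $\rho\le 2R$, because the translated ball sticks out of $B_R$ at least as much as the centered one. Layer-cake integration against the decreasing positive weight $\K$ on $(0,2R]$ then yields the inequality. Beyond $2R$ both sets differ by a fixed bounded set where the kernel can be negative, and this contribution must be handled.

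The main obstacle is exactly this handling. For $\rho> R+|x|$ the measures coincide but the cumulative volumes do not. The difference $|B_\rho(x)\cap B_R^c|-|B_\rho(0)\cap B_R^c|$ tends to $0$ as $\rho\to\infty$, yet it is not zero at $\rho=R+|x|$. I would therefore integrate by parts only up to $\rho=R+|x|$, where the boundary term $\K(R+|x|)\cdot(\text{nonnegative difference})\ge0$ helps. This needs $R+|x|<1$ so that $\K>0$ there. Verifying that monotonicity of $\K$ near $2R$ forces the explicit smallness threshold $\widetilde R(d,s)$ is where I expect the remaining work to lie.
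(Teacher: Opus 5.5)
Your route is correct in substance but differs from the paper's. However, the ``main obstacle'' you flag at the end does not exist. Set $H(\rho):=|B_\rho(x)\setminus B_R(0)|-|B_\rho(0)\setminus B_R(0)|$. Then $H(\rho)=0$ for $\rho\le R-|x|$, and also for every $\rho\ge R+|x|$: since $B_R(0)\subset B_{R+|x|}(x)$, both terms equal $|B_\rho|-|B_R|$ there. Your own Step 2 already forces this, because $H'=0$ on $(R+|x|,\infty)$. So your claim that the difference is nonzero at $\rho=R+|x|$ is false, and the two sets $\{|x+z|\ge R\}$ and $\{|z|\ge R\}$ agree outside $B_{2R}$. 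There is no far-field piece and no boundary term, and one gets directly
\begin{align*}
G(x)-G(0)=\int_{R-|x|}^{R+|x|}\K(\rho)\,H'(\rho)\,d\rho=-\int_{R-|x|}^{R+|x|}\K'(\rho)\,H(\rho)\,d\rho\;\ge\;0 .
\end{align*}
This uses two facts. First, $H\ge 0$, which is the precise form of ``sticks out at least as much'': $|B_\rho(x)\cap B_R(0)|\le\min\{|B_\rho|,|B_R|\}=|B_\rho(0)\cap B_R(0)|$. Second, $\K'\le 0$ on $(0,2R)\subset(0,1]$. Positivity of $\K$ is not needed, and neither is $\widetilde R(d,s)$. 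Monotonicity on $(0,1]$ is a one-line derivative computation using $B_{d,s}\ge 0$ (i.e.\ $s<s_0$), which you essentially gave in Step 2. The threshold $\widetilde R$ only enters the existence argument in Remark \ref{exists}.

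The paper argues directly with sets. After cancelling the common region $(B_R(x)\cup B_R(0))^c$, it maps the lune $B_R(x)\setminus B_R(0)$ onto $B_R(0)\setminus B_R(x)$ by the measure-preserving reflection $\xi\mapsto x-\xi$. This gives $\int_{B_R(0)\setminus B_R(x)}\bigl[\K(|z|)-\K(|z-x|)\bigr]\,dz$ with $|z|<R\le|z-x|<2R\le 1$, a pointwise comparison with no radial reduction or integration by parts. Your distribution-function argument does not need the reflection. It only uses that the two lunes have equal measure and lie on opposite sides of the sphere $\{|z|=R\}$ inside $B_{2R}$. In fact it collapses to a bathtub comparison with the constant $\K(R)$:
\begin{align*}
\int_{B_R(0)\setminus B_R(x)}\K(|z|)\,dz\;\ge\;\K(R)\,|B_R(0)\setminus B_R(x)|=\K(R)\,|B_R(x)\setminus B_R(0)|\;\ge\;\int_{B_R(x)\setminus B_R(0)}\K(|z|)\,dz .
\end{align*}
So your approach is a bit heavier as written, but it is less tied to the specific symmetry of two equal balls. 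The paper's reflection gives the shortest, fully pointwise proof. Both use exactly the same input: monotonicity of the kernel on $(0,2R)$, guaranteed by $\operatorname{diam}(B_R)\le 1$.
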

\begin{proof} It is easy to check that for every $x\in B_R(0)$, $ - (-\Delta)^{s+\log}\mathbb{1}_{B^c_R(0)}(x)$ is finite i.e., it exists pointwisely. First, we claim that 
\begin{align}\label{eq:decresing}
    r\mapsto \frac{B_{d,s}-2\log r}{r^{d+2s}}\quad  \text{is strictly decreasing for $0<r\leq 1$.}
\end{align}
Indeed, differentiating the map, we get
\begin{align*}
    \frac{{\rm d}}{\dr}\left[\frac{B_{d,s}-2\log r}{r^{d+2s}}\right]=-r^{-(d+2s)-1}\left[2+(d+2s)(B_{d,s}-2\log r)\right]<0,
\end{align*}
for any $r \in (0, 1].$

From the definition of the fractional logarithmic Laplacian, we have
    \begin{align*}
       - (-\Delta)^{s+\log}\mathbb{1}_{B^c_R}(x)=C_{d, s}\int_{B^c_R(0)}\frac{B_{d,s}-2\log(|x-y|)}{|x-y|^{d+2s}}\dy=C_{d, s}\int_{B^c_R(x)}\frac{B_{d,s}-2\log(|z|)}{|z|^{d+2s}}\dz,
    \end{align*}
    where we use the substitution $x-y=z.$ Now
\begin{align*}
   - (-\Delta)^{s+\log}\mathbb{1}_{B^c_R}(0)=C_{d, s}\int_{B^c_R(0)}\frac{B_{d,s}-2\log|z|}{|z|^{d+2s}}\dz.
\end{align*}
Note that we split the domain of integration as:
\begin{align*}
    &B^c_R(0):=\left(B_R(x)\setminus B_R(0)\right)\cup \left(B_R(x)\cup B_R(0)\right)^c, \text{ and }\\
    &B^c_R(x):=\left(B_R(0)\setminus B_R(x)\right)\cup \left(B_R(x)\cup B_R(0)\right)^c.
\end{align*}
where $\cup$ denotes the disjoint union.
Therefore, 
\begin{align}\label{Eq7.1}
    &-\left((-\Delta)^{s+\log}\mathbb{1}_{B^c_R}(x)-(-\Delta)^{s+\log}\mathbb{1}_{B^c_R}(0)\right)\no\\
    &=C_{d, s}\left[\int_{B^c_R(x)}\frac{B_{d,s}-2\log(|z|)}{|z|^{d+2s}}\dz-\int_{B^c_R(0)}\frac{B_{d,s}-2\log|z|}{|z|^{d+2s}}\dz\right]\no\\
    &=C_{d, s}\left[\int_{B_R(0)\setminus B_R(x)}\frac{B_{d,s}-2\log(|z|)}{|z|^{d+2s}}\dz-\int_{B_R(x)\setminus B_R(0)}\frac{B_{d,s}-2\log(|z|)}{|z|^{d+2s}}\dz\right].
\end{align}
We define 
\begin{align*}
    \sigma: B_R(x)\setminus B_R(0)\to B_R(0)\setminus B_R(x) , \quad \sigma (\xi)=x-\xi.
\end{align*}
Since $|\text{det} D\sigma|=1,$ we use the change of variable formula to get
\begin{align*}
\int_{B_R(x)\setminus B_R(0)}\frac{B_{d,s}-2\log(|z|)}{|z|^{d+2s}}\dz&=\int_{B_R(0)\setminus B_{R}(x)}\frac{B_{d,s}-2\log(|\sigma(z)|)}{|\sigma(z)|^{d+2s}}\dz\\
&=\int_{B_R(0)\setminus B_{R}(x)}\frac{B_{d,s}-2\log(|z-x|)}{|z-x|^{d+2s}}\dz.
\end{align*}
Now, from \eqref{Eq7.1}, we have
\begin{align}\label{Eq7.2}
&-\left((-\Delta)^{s+\log}\mathbb{1}_{B^c_R}(x)-(-\Delta)^{s+\log}\mathbb{1}_{B^c_R}(0)\right)\no\\
&=C_{d, s}\int_{B_R(0)\setminus B_R(x)}\left[\frac{B_{d,s}-2\log(|z|)}{|z|^{d+2s}}-\frac{B_{d,s}-2\log(|z-x|)}{|z-x|^{d+2s}}\right]\dz.
\end{align}
Since, $z\not\in B_R(x),$ $|z-x|\geq R,$ and $|z|\leq R.$ Also, we note that $|z-x|\leq |x|+|z|\leq 2R\leq 1.$ By decreasing property \eqref{eq:decresing}, we get \eqref{Eq7.2} is nonnegative. This completes the proof.
\end{proof}
\begin{lemma}\label{example-1}
Let $B_R(0)\subset B_{R_{e}}(0)$ with $\text{diam}(B_R)\leq \min\{\widetilde{R}(d, s), 1\}$ and $R_e:=R+e^{\frac{B_{d,s}}{2}}+1.$ Also, assume that $s<\frac{1}{2}$ and $u \in W^{s+\log,2}(B_{R}(0)) \cap L_{\log, 2}(\rd)$ is a weak solution to
\begin{align}\label{countereq}
    (-\Delta)^{s+\log}u=0\quad \text{in} \quad B_{R}(0),
\no\\
    u=\mathbb{1}_{B^c_{R_e}}\quad \text{in} \quad B^c_{R}(0).
    \end{align}
Then $\inf_{B_{R}(0)}u<0.$
\end{lemma}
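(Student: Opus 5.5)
Assume for contradiction that $u\ge 0$ in $B_R(0)$. The idea is to test the equation at a point of $B_R(0)$ where $u$ is minimal and read off the sign of the operator there. Since $u\in\mathcal C^\alpha_{\mathrm{loc}}(B_R)\cap\mathcal C(\overline{B_R})$ by the standing assumption of this section, $u$ attains its minimum $m=\min_{\overline{B_R}}u\ge 0$. The plan has three steps.

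\emph{Step 1 (compare with a translate).} Set $w:=u-m\mathbb 1_{\rd}$. For $p=2$ the kernel is $C_{d,s}(B_{d,s}-2\log|x-y|)|x-y|^{-d-2s}$. It annihilates constants formally, but a constant is not in $L_{\log,2}$ for the $\Km$ part, so I would not subtract $m$ globally. Instead I would use the splitting
\begin{align*}
u=u\mathbb 1_{B_R}+\mathbb 1_{B^c_{R_e}}.
\end{align*}
The aim is to show that the equation forces, at a minimum point $x_1\in\overline{B_R}$, a sign contradiction. The contribution of the far part $\mathbb 1_{B^c_{R_e}}$ is
\begin{align*}
-C_{d,s}\int_{B^c_{R_e}}\frac{B_{d,s}-2\log|x-y|}{|x-y|^{d+2s}}\dy .
\end{align*}
For $x\in B_R$ and $y\in B^c_{R_e}$ we have $|x-y|\ge R_e-R=e^{B_{d,s}/2}+1>e^{B_{d,s}/2}$. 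Hence the integrand is strictly negative, so $(-\Delta)^{s+\log}\mathbb 1_{B^c_{R_e}}(x)>0$, with a positive lower bound $c_0$ uniform on $\overline{B_R}$.

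\emph{Step 2 (test the equation).} Take a nonnegative bump $\phi$ concentrated near $x_1$ (or use a pointwise/viscosity reading, justified by the regularity assumed in this section). From the weak formulation,
\begin{align*}
0=\iint \K(u(x)-u(y))(\phi(x)-\phi(y)).
\end{align*}
Split this into $B_R\times B_R$, where $\K\ge0$ since $|x-y|\le 2R\le 1$, and the cross terms. On $B_R\times B_R$, near the minimum, $u(x)-u(y)\le 0$ contributes a nonpositive term. For $y\in B_{R_e}\setminus B_R$ we have $u(y)=0\le u(x)$, and this sits in the region where $\K$ may change sign; this is the delicate piece, see below. For $y\in B^c_{R_e}$ the contribution is $(u(x)-1)\K<0\cdot$sign, which yields the strictly positive term $c_0$ from Step 1 as $u\le$ some bound. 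Rather than doing this by hand, I would exploit linearity:
\begin{align*}
(-\Delta)^{s+\log}(u\mathbb 1_{B_R})=-(-\Delta)^{s+\log}\mathbb 1_{B^c_{R_e}}<0\quad\text{in }B_R.
\end{align*}
So $v:=u\mathbb 1_{B_R}$ is a strict subsolution that vanishes outside $B_R$.

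\emph{Step 3 (contradiction via the weak maximum principle).} Test with $v_+=v$, which is allowed since $u\ge0$ and $v\in W_0^{s+\log,2}(B_R)$. This gives $\iint\K|v(x)-v(y)|^2<0$ unless $v\equiv0$; note the left side equals $\mathcal E(v,v)$. The main obstacle is positivity of $\mathcal E(v,v)$, because $\K$ is negative at large scales. However, $v$ is supported in $B_R$ with diameter at most $1$, and by Step 1 the pairs at distance beyond $e^{B_{d,s}/2}$ contribute $\int_{B_R}v^2(x)\int_{|x-y|\text{ large}}\K\,dy\,dx$. For $s<s_0$ I expect $\mathcal E(v,v)\ge0$ anyway: by the Fourier symbol $|\xi|^{2s}\log|\xi|^2$ combined with the logarithmic Poincaré inequality, Proposition~\ref{poincare}, the negative low-frequency part is dominated for small $R$. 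I would verify this via Remark~\ref{support-positive-kernel} and the bound $\int_{B_R^c}\K\ge cR^{-2s}>0$ for small $R$. Then $v\equiv0$. In that case the identity of Step 2 reads $0=-(-\Delta)^{s+\log}\mathbb 1_{B^c_{R_e}}<0$ in $B_R$, a contradiction. Hence $\inf_{B_R}u<0$.
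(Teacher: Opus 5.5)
Your argument is a workable route and differs from the paper's, but its decisive step is only asserted, not proved.

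\textbf{What the paper does.} It tests the equation with $\mathbb{1}_{B_R(0)}$, which is admissible because $s<\frac12$ (Lemma~\ref{lem:test function}). This gives
\[
\int_{B_R}u(x)\Big(\int_{B_R^c}\K(|x-y|)\,dy\Big)\,dx=\int_{B_R}\int_{B^c_{R_e}}\K(|x-y|)\,dy\,dx<0 .
\]
It then proves \eqref{claim}: the inner integral is positive at \emph{every} $x\in B_R$. Lemma~\ref{lem: 1st section 7} reduces this to $x=0$, where the digamma computation is explicit.

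\textbf{What you do.} You split $u=v+\mathbb{1}_{B^c_{R_e}}$ with $v=u\mathbb{1}_{B_R}$, and obtain $\mathcal E(v,\phi)=-2\int_{B_R}h\phi$ with $h(x)=-\int_{B^c_{R_e}}\K(|x-y|)\,dy>0$. You then test with $\phi=v$. This is a weak-maximum-principle argument, and it stands or falls with $\mathcal E(v,v)\ge 0$, which you only ``expect''.

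\textbf{The gap.} The Fourier heuristic does not prove this nonnegativity, since the symbol $|\xi|^{2s}\log|\xi|^2$ is negative for $|\xi|<1$. A bound $\int_{B_R^c}\K\ge cR^{-2s}$ at the center alone is also not enough. The clean way to close the step is the identity, valid for $v$ vanishing outside $B_R$,
\[
\mathcal E(v,v)=\iint_{B_R\times B_R}\K(|x-y|)\,(v(x)-v(y))^2\,dx\,dy+2\int_{B_R}v(x)^2\Big(\int_{B_R^c}\K(|x-y|)\,dy\Big)\,dx .
\]
The first term is nonnegative because $|x-y|<2R\le 1$ and $B_{d,s}\ge 0$. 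The second term is nonnegative exactly by \eqref{claim}, so you need the paper's key claim at every point of $B_R$. Alternatively, invoke the coercivity asserted in Remark~\ref{exists}, which is where $\operatorname{diam}(B_R)\le\widetilde R$ enters. Your Poincar\'e-plus-Remark~\ref{support-positive-kernel} sketch is essentially that argument. So both proofs rest on the same positivity fact and differ only in the test function.

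\textbf{What each route buys.} The paper needs only the sign of one integral and no quadratic-form argument, but it must know $\mathbb{1}_{B_R}\in W^{s+\log,2}(\rd)$, hence $s<\frac12$. Your test function lies in $W_0^{s+\log,2}(B_R)$ as soon as $u-\mathbb{1}_{B^c_{R_e}}$ does, as for the solution built in Remark~\ref{exists}. Admissibility therefore imposes no restriction on $s$. Like the paper, you need the Dirichlet formulation \eqref{weak}, i.e.\ test functions ranging over all of $W_0^{s+\log,2}(B_R)$, not only those supported in compact subsets as in Definition~\ref{weaksoldef}.

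Testing with $v_+$, instead of arguing by contradiction, gives more. The kernel is nonnegative on $B_R\times B_R$, and the cross terms of $\mathcal E(v,v_+)$ and $\mathcal E(v_+,v_+)$ coincide. Hence $0\le\mathcal E(v_+,v_+)\le\mathcal E(v,v_+)=-2\int_{B_R}hv_+\le 0$. This forces $u\le 0$ a.e.\ in $B_R$, with $u\not\equiv 0$ there, which is stronger than $\inf_{B_R}u<0$.

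\textbf{Minor points.} The continuity assumption, the minimum point, the bump function and the viscosity reading in Steps 1--2 play no role and can be deleted. Your side remark there is wrong: constants belong to $L_{\log,2}(\rd)$ and are annihilated by $(-\Delta)^{s+\log}$. The paper itself uses $\widetilde u=u-\inf_{B_R}u$ right after the lemma.
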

\begin{center}
\begin{tikzpicture}[scale=0.8, transform shape]

  \def\R{1.5}    
  \def\Re{3.0}   
  \def\L{4.6}    


  \draw (0,0) circle (\Re);
  \draw (0,0) circle (\R);

     \fill[cyan!9] (0,0) circle (\Re);
    \draw[fill=white] (0,0) circle (\R);

  \node at (0,0) {$(-\Delta)^{s+\log}u= 0$};
  \node at (0,{(\R+\Re)/2}) {$u = 0$};
  \node at (0,{(\Re+\L-0.5)/2}) {$u = 1$};

  \node at (\R-0.5,-0.5) [below left] {$B_R$};
 \node at (\Re-0.2,-0.5) [below left] {$B_{R_e}$};
 \node at (\Re,-0.5) [below right] {$B^c_{R_e}$};
\end{tikzpicture}

\captionof{figure}{Construction of the function $u$.}
\end{center}
\begin{proof}
Note that the existence of a solution $u$ to \eqref{countereq} is guaranteed, see Remark \ref{exists}. 
As we consider $u$ to be continuous up to the boundary of $B_R(0)$ and $u=0$ on $\partial B_R(0)$, we have $\inf_{B_{R}(0)}u\leq 0$. We show that the infimum is strictly negative. First we observe that, $\mathbb{1}_{B_{R}(0)} \in W_0^{s+\log,2}(B_R(0))$ for $s< \frac{1}{2}$ is a valid test function (see Lemma \ref{lem:test function} for a proof). From the weak formulation, we get
\begin{align*}
    &0=\iint_{B_{R}(0)\times B_{R}(0)} \K(|x-y|)(u(x)-u(y))(\mathbb{1}_{B_{R}(0)}(x)-\mathbb{1}_{B_{R}(0)}(y))\dxy\\
    &+2\iint_{B_{R}(0)\times \rd\setminus B_{R}(0)} \K(|x-y|) (u(x)-u(y)) (\mathbb{1}_{B_{R}(0)}(x)-\mathbb{1}_{B_{R}(0)}(y)) \dxy\\
    &=2C_{d,s}\int_{B_{R}(0)}\int_{B_{R_e}(0)\setminus B_{R}(0)}\frac{(B_{d,s}-2\log(|x-y|))}{|x-y|^{d+2s}}u(y)  \dxy\\
    &\quad+2C_{d,s}\int_{B_{R}(0)}\int_{B^c_{R_e}(0)}\frac{(B_{d,s}-2\log(|x-y|))}{|x-y|^{d+2s}}(u(y)-1)  \dxy.
    \end{align*}
Using the above identity, observe that 
\begin{align}\label{eq7.1}
    0> \int_{B_R(0)}\int_{B^c_{R_e}(0)}\frac{B_{d,s}-2\log(|x-y|)}{|x-y|^{d+2s}}\dxy=\int_{B_R(0)}\int_{B^c_R(0)}\frac{B_{d,s}-2\log(|x-y|)}{|x-y|^{d+2s}}u(y)\dxy.
\end{align}
Now, we claim that for any $y\in B_R(0)$,
\begin{align}\label{claim}
    \int_{B^c_R(0)}\frac{B_{d,s}-2\log(|x-y|)}{|x-y|^{d+2s}}\dx >0.
\end{align}
If the claim holds, then using \eqref{eq7.1}, we get $\inf_{B_R(0)}u < 0.$\vspace{0.1 cm}\\

\underline{Proof of \eqref{claim}}: In view of \eqref{claim} and Lemma \ref{lem: 1st section 7}, it is enough to show that
\begin{align*}
 \int_{B^c_R(0)}\frac{B_{d,s}-2\log(|x|)}{|x|^{d+2s}}\dx>0.   
\end{align*}
Using  polar
coordinates, $|x|=\rho$,
\begin{align*}
    \int_{B^c_R(0)}\frac{B_{d,s}-2\log(|x|)}{|x|^{d+2s}}\dx
    =\omega_{d-1}\int_R^{\infty}\frac{(B_{d,s}-2\log \rho)\,\rho^{d-1}}{\rho^{d+2s}}\,\mathrm{d}\rho
    =\omega_{d-1}\int_R^{\infty}\frac{B_{d,s}-2\log \rho}{\rho^{1+2s}}\,\mathrm{d}\rho.
\end{align*}
First, we note that
\begin{align*}
    \frac{\mathrm{d}}{\mathrm{d}\rho}\left[\frac{\rho^{-2s}}{2s}\Big(2\log\rho-B_{d,s}+\frac{1}{s}\Big)\right]
    &=-\rho^{-1-2s}\Big(2\log\rho-B_{d,s}+\frac{1}{s}\Big)+\frac{\rho^{-2s}}{2s}\frac{2}{\rho}=\frac{B_{d,s}-2\log\rho}{\rho^{1+2s}},
\end{align*}
and hence
\begin{align*}
    \int_R^{\infty}\frac{B_{d,s}-2\log \rho}{\rho^{1+2s}}\mathrm{d}\rho
    =\frac{\rho^{-2s}}{2s}\Big(2\log\rho-B_{d,s}+\frac{1}{s}\Big)\Bigg|_{R}^{\infty}
    =\frac{R^{-2s}}{2s}\Big(B_{d,s}-\frac{1}{s}-2\log R\Big).
\end{align*}
Recalling (\cite[Proposition 1.1]{ChenChenHauer}),
\begin{align*}
    B_{d,s}:=\log 4+\frac{1}{s}+\psi(1-s)+\psi\Big(\frac{d+2s}{2}\Big),
\end{align*}
we obtain
\begin{align*}
    \int_{B^c_R(0)}\frac{B_{d,s}-2\log(|x|)}{|x|^{d+2s}}\dz
    =\omega_{d-1}\,\frac{R^{-2s}}{2s}\left(\log 4-2\log R+\psi(1-s)+\psi\Big(\frac{d+2s}{2}\Big)\right).
\end{align*}
Since $\operatorname{diam}(B_R(0))\le 1$ gives $R\le\frac{1}{2}$, we have
$-2\log R\ge 2\log 2$. Moreover, for $s\in(0,\frac{1}{2})$ and $d\ge 2$ the
digamma function $\psi$ is increasing, so
\begin{align*}
    \psi(1-s)+\psi\Big(\frac{d+2s}{2}\Big)\ge \psi\Big(\frac{1}{2}\Big)+\psi(1)
    =\big(-\gamma-2\log 2\big)-\gamma=-2\gamma-2\log 2.
\end{align*}
Therefore, we get
\begin{align*}
    \log 4-2\log R+\psi(1-s)+\psi\Big(\frac{d+2s}{2}\Big)
    \ge 2(\log 2-\gamma)>0,
\end{align*}
where we used $\log 2>\gamma$. Due to \eqref{comparison}, 
\begin{align*}
     - (-\Delta)^{s+\log}\mathbb{1}_{B_R^c(0)}(y)&=\int_{B^c_R(0)}\frac{B_{d,s}-2\log(|x-y|)}{|x-y|^{d+2s}}\dx\\
     &\geq \int_{B^c_R(0)}\frac{B_{d,s}-2\log(|x|)}{|x|^{d+2s}}\dx=-(-\Delta)^{s+\log}\mathbb{1}_{B^c_R}(0)>0,
\end{align*}
as required.
\end{proof}

Using the continuity of $u$ up to the boundary, and noting the boundary data for $u$, we note that the infimum of $u$ is achieved at some interior $x_0 \in B_R(0)$, i.e., $\inf_{B_R(0)}u(x)=u(x_0).$ Now, using Lemma \ref{example-1},  we show that the tail term in the Harnack inequality cannot be dropped.
Consider the following function on $\rd$,  
\begin{align*}
    \widetilde u =u-\inf_{B_R(0)}u. 
\end{align*}
We observe that $\widetilde u \ge 0$ a.e. in $B_R(0)$.
Also, $\widetilde u$ weakly solves $(-\Delta)^{s+\log}\widetilde{u}=0$ in $B_R(0)$ and $\inf_{B_R(0)}\widetilde u=\widetilde{u}(x_0)=0.$
Since $u$ is regular up to the boundary, $\widetilde{u}$ is also regular up to the boundary and by using Lemma \ref{example-1}, we have $\widetilde{u}=-\inf_{B_R(0)}u>0$ on $\partial B_R(0)$ as $u=0$ on $\partial B_R(0)$.\\
Now note that $u$ is a non-constant function in $B_R(0)$ as it is continuous in $\overline{B_R(0)}$ and has negative infimum, by Lemma \ref{example-1}. This gives $\widetilde u$ is also a non-constant function i.e. $\sup_{B_R(0)}\widetilde u>0$.
Now we claim that, there exists $r_0 \in (0,R)$ such that $\sup_{B_{r_0}(x_0)} \widetilde u>0.$ 
On the contrary, suppose $\sup_{B_r(x_0)}\widetilde u=0$ for every $0<r<R$. 
Denoting $d:=\text{dist}(x_0, \partial B_R(0))=|x_0-\widetilde x|$ for $\widetilde x \in \partial B_R(0),$ we get $\widetilde u\equiv 0$ in $B_{d}(x_0)$ since $d<R$. This again contradicts the boundary regularity of $\widetilde u$ at $\widetilde x.$ Thus we find a radius $r_0 \in (0,R)$ such that $\sup_{B_{r_0}}\widetilde u>0.$ So, an application of Harnack inequality (Theorem \ref{harnack_intro}) in the balls with radius $0<r_0<r_1\leq R$ gives
 \begin{align*}  
 \sup_{B_{r_0}(x_0)} \widetilde u\leq C \inf_{B_{r_0}(x_0)} \widetilde{u}+ \left(\frac{r_0}{r_1}\right)^{2s}\left[\operatorname{Tail}_{+}(\widetilde{u}_{-}; x_0, r_1)+ \operatorname{Tail}_{-}(\widetilde{u}_{+}; x_0, r_1)\right].
\end{align*}
Since $\inf_{B_{r_0}(x_0)}\widetilde u=0,$ and $\widetilde u\geq 0$ in $\rd$ we get
 \begin{align}\label{contradict harnack}
 0<\sup_{B_{r_0}(x_0)} \widetilde u\leq C \operatorname{Tail}_{-}(\widetilde{u}_{+}; x_0, r_1).    
 \end{align}
  Using $u \in L_{\log, 2}(\rd)$, we have (see Remark \ref{tail remark}), $0<\operatorname{Tail}_{-}(\widetilde{u}_{+}; x_0, r_1) < +\infty. $
 Hence, we cannot drop the tail quantity in \eqref{contradict harnack}.
 
In the following, we show that the test function used in the above lemma is admissible.
 \begin{lemma}\label{lem:test function}
    We have $\mathbb{1}_{B_{R}(0)}\in W^{s+\log,2}(\rd)$ for $0<s<\frac{1}{2}.$ 
\end{lemma}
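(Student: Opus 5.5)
We need to show $\mathbb{1}_{B_R}\in L^2(\rd)$, which is trivial, and that the seminorm
\[
[\mathbb{1}_{B_R}]_{s,\log,2}^2=\iint_{\rd\times\rd}\Kp(|x-y|)\,|\mathbb{1}_{B_R}(x)-\mathbb{1}_{B_R}(y)|^2\dxy
\]
is finite. The integrand is nonzero only when exactly one of $x,y$ lies in $B_R$. By symmetry the seminorm equals
\[
2\int_{B_R}\int_{\rd\setminus B_R}\Kp(|x-y|)\dy\dx .
\]
Since $\Kp(r)=C_{d,2,s}(-\log r)_+r^{-d-2s}$ vanishes for $r\ge1$, only pairs with $|x-y|<1$ contribute.

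\textbf{Step 1: reduce to a boundary-distance weight.} For $x\in B_R$, set $\delta(x)=R-|x|=\operatorname{dist}(x,\partial B_R)$. Every $y\notin B_R$ satisfies $|x-y|\ge\delta(x)$, so in polar coordinates around $x$,
\[
\int_{\rd\setminus B_R}\Kp(|x-y|)\dy\le \omega_{d-1}C\int_{\delta(x)}^{1}\tau^{-1-2s}\log\tfrac1\tau\dtau
\]
when $\delta(x)<1$; the integral is zero otherwise. An integration by parts, exactly as in \eqref{int-1} or in the computation of $\int_{\rd\setminus B_0}\Kp$ in Section \ref{sec: Holder}, bounds the right side by
\[
C(d,s)\,\delta(x)^{-2s}\bigl(1+\log\tfrac1{\delta(x)}\bigr).
\]

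\textbf{Step 2: integrate over the ball.} Passing to polar coordinates in $x$ and writing $\rho=R-|x|$ gives
\[
[\mathbb{1}_{B_R}]^2_{s,\log,2}\le C\int_0^{R}(R-\rho)^{d-1}\rho^{-2s}\Bigl(1+\log^+\tfrac1\rho\Bigr)\,d\rho\le CR^{d-1}\int_0^R\rho^{-2s}\Bigl(1+\log^+\tfrac1\rho\Bigr)d\rho .
\]
This integral is finite exactly when $2s<1$, because $\rho^{-2s}\log(1/\rho)\le C_\kappa\rho^{-2s-\kappa}$ for any $0<\kappa<1-2s$; this is the same trick used in Remark \ref{support-positive-kernel}. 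This is where the hypothesis $s<\frac12$ enters, and the logarithm costs nothing beyond it.

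\textbf{Main obstacle.} The only delicate point is the sharp inner estimate near $\partial B_R$, which shows the singularity is $\delta^{-2s}\log(1/\delta)$ and not worse. One could instead use the standard fact that $\mathbb{1}_{B_R}\in W^{s',2}$ for all $s'<\frac12$, choose $s<s'<\frac12$, and absorb the logarithm via $(-\log r)_+r^{-d-2s}\le C r^{-d-2s'}$ on $(0,1)$. This alternative is cleaner, and I would present it as the main argument, with Steps 1 and 2 serving as its self-contained verification.

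Finally, $\mathbb{1}_{B_R}$ vanishes outside $B_R$, so $\mathbb{1}_{B_R}\in W_0^{s+\log,2}(B_R(0))$, and it is therefore an admissible test function in Lemma \ref{example-1}.
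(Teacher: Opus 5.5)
Your proposal is correct and follows essentially the same route as the paper. You reduce the seminorm to $2\iint_{B_R\times B_R^c}\Kp(|x-y|)\dxy$, bound the inner integral through the distance to $\partial B_R$ in polar coordinates, and integrate the resulting $\delta^{-2s}$-type weight (up to a logarithm or an extra $\varepsilon$-power) over the ball, which is finite exactly because $2s<1$. The only difference is cosmetic: you first derive the sharp $\delta^{-2s}\log(1/\delta)$ bound by integration by parts, whereas the paper absorbs the logarithm at once via $|\log\tau|\lesssim\tau^{-\varepsilon}$, which is precisely your alternative comparison with $W^{s',2}$ for $s'=s+\varepsilon/2$.
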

\begin{proof}
Denote $v=\mathbb{1}_{B_{R}(0)}$. It is easy to see that $v\in L^2(\rd)$. We now show 
\begin{align*}
      [v]_{s+\log,2} := \iint_{\rd \times \rd} \Kp(|x-y|) |v(x) - v(y)|^2 \dxy<\infty.
\end{align*}
Clearly, we have
\begin{align*}
\iint_{\rd \times \rd} \Kp(|x-y|) |v(x) - v(y)|^2 \dxy&=\iint_{B_R(0) \times B_R(0)} \Kp(|x-y|) |v(x) - v(y)|^2 \dxy\\&\quad+2\iint_{B_R(0) \times \rd \setminus B_R(0)} \Kp(|x-y|) |v(x) - v(y)|^2 \dxy\no\\&=   2\iint_{B_R(0) \times \rd\setminus B_R(0)} \Kp(|x-y|)  \dxy\\&=2\int_{B_R(0)}\left(\int_{\rd\setminus B_R(0)} \frac{(-\log(|x-y|))_+}{|x-y|^{d+2s}}\dx\right)\dy. 
\end{align*}
Denoting $\delta(y)=\operatorname{dist}(y,\partial B_R(0))$, we note that $B_R^c(0)\subset B_{\delta(y)}^c(y)$ for any $y\in B_R$. Thus, we estimate as
\begin{align*}
    \int_{B_R(0)}\left(\int_{\rd\setminus B_R(0)} \frac{(-\log(|x-y|))_+}{|x-y|^{d+2s}}\dx\right)\dy&\leq \int_{B_R(0)}\left(\int_{B_{\delta(y)}^c(y)} \frac{(-\log(|x-y|))_+}{|x-y|^{d+2s}}\dx\right)\dy.
\end{align*}
Using polar coordinates, we now have
\begin{align*}
\int_{B_R(0)}\left(\int_{B_{\delta(y)}^c(y)} \frac{(-\log(|x-y|))_+}{|x-y|^{d+2s}}\dx\right)\dy&=\omega_{d-1}\int_{B_R(0)}\left(\int_{\delta(y)}^1 \tau^{d-1}\frac{(-\log(\tau))_+}{\tau^{d+2s}}\mathrm{d}\tau\right)\dy\\&\leq \omega_{d-1}\int_{B_R(0)}\left(\int_{\delta(y)}^1 \frac{|\log\tau|}{\tau^{2s+1}}\mathrm{d}\tau\right)\dy.
\end{align*}
Since $\tau^{\varepsilon}|\log (\tau)|\to 0$ as $\tau \to 0,$ for any $\varepsilon >0$, we obtain
\begin{align*}
    \int_{\delta(y)}^1 \frac{|\log\tau|}{\tau^{2s+1}}\mathrm{d}\tau\lesssim \int_{\delta(y)}^1  \tau^{-\varepsilon-2s-1}\, \mathrm{d}\tau=\frac{\tau^{-\varepsilon-2s}}{-2s-\varepsilon}\Bigg|^1_{\delta(y)}=\frac{\delta(y)^{-2s-\varepsilon}-1}{2s+\varepsilon}\leq \frac{\delta(y)^{-2s-\varepsilon}}{2s+\varepsilon}.
\end{align*}
Therefore,
\begin{align*}
    \iint_{\rd \times \rd} \Kp(|x-y|) |v(x) - v(y)|^2 \dxy\leq C\int_{B_R(0)}\delta(y)^{-2s-\varepsilon}\dy,
\end{align*}
for any $\varepsilon>0$. For $s<\frac 12$, we have $-2s-\varepsilon>-1$, if we choose $0<\varepsilon<1-2s$, and the above integral is finite. This completes the proof.
\end{proof}

Finally, the following remark discusses the existence of a non-trivial weak solution that satisfies the hypothesis of Lemma \ref{example-1}. 

\begin{remark}\label{exists}
The effect of $\widetilde R:= \widetilde{R}(d, s)$ comes into play for the existence of a weak solution to \eqref{countereq}. We choose $\widetilde R>0$ satisfying
\begin{align*}
 \left(S_{d,s}\frac{C_{d,s}\omega_{d-1}}{s^2}\right)\widetilde R^{2s}<{(-\log(2\widetilde{R}))_{+}},   
\end{align*}
where $C_{d,s}$ is the normalization constant for fractional Laplacian and $S_{d,s}$ is the constant appearing in the following fractional Poincar\'e inequality (Proposition \ref{poincare}):
\begin{align*}
    \|u\|_{L^2(B_R(0))}^2\leq S_{d,s}\frac{R^{2s}}{(-\log(2R))_{+}}\iint_{B_R(0) \times B_R(0)}\mathcal{K}^{s+\log,2}_+(|x-y|)(u(x)-u(y))^2\dxy.
\end{align*}
For $\text{diam}(B_R)\le \widetilde R$, we consider the following Dirichlet boundary value problem: 
\begin{align}\label{ceq}
(-\Delta)^{s+\log}v=f \text{ in } B_{R}(0),\quad v=0  \text{ in }  \rd \setminus B_{R}(0),
\end{align}
 with $f \in (W_0^{s+\log,2}(B_{R}))^*$. Using \cite[Theorem 1.1]{ChenChenHauer}, the energy functional of \eqref{ceq} is coercive for the above choice of $\widetilde R$. Therefore, using the Lax-Milgram lemma, \eqref{ceq} admits a weak solution ${v} \in W_0^{s+\log,2}(B_{R})$.

We observe that for $s< \frac{1}{2}$, 
\begin{align*}
 \mathbb{1}_{B_{R_e}^c} \in L_{\log, 2}(\rd) \cap W^{s+\log,2}(B_{R}) \text{ and } -(-\Delta)^{s+\log} \mathbb{1}_{B_{R_e}^c} \in \left(W^{s+\log,2}_0(B_{R})\right)^*.
\end{align*}
Thus, for $f:= -(-\Delta)^{s+\log}g$ and $g=\mathbb{1}_{B_{R_e}^c}$, there exists $\widetilde v\in W^{s+\log,2}_0(B_{R}) $ which satisfies the following identity for every $\phi\in W^{s+\log,2}_0(B_{R})$,
\begin{align}\label{weak}
    \iint_{\rd \times \rd} \K(|x-y|)(\widetilde{v}(x)-\widetilde{v}(y)) (\phi(x) -\phi(y)) \dxy = \left<-(-\Delta)^{s+\log}g, \phi\right>, 
\end{align}
Using $W^{s+\log,2}_0(B_{R}) \subset L_{\log, 2}(\rd)$, the function $u:=\widetilde{v}+g\in W^{s+\log,2}(B_{R}) \cap L_{\log, 2}(\rd)$ and satisfies $u=g$ in $B^c_R(0)$. Furthermore, by \eqref{weak}, $u$ weakly solves \eqref{countereq}.

Moreover, for $x \in B_R(0)$, 
\begin{align*}
 |f(x)|\leq C\int_{B^c_{1}(x)}\frac{1+\log(|x-y|)}{|x-y|^{d+2s}}\dy=C\omega_{d-1}\int_1^\infty \frac{1+\log(\tau)}{\tau^{d+2s}}\dtau=M,   
\end{align*}
for some $M>0$, i.e., $f \in L^\infty(B_R)$. Thus, the set up of \cite{RosOtonSerra2014} can be used (with a suitable radius) to get the global regularity of $v$, which gives that $u$ is also continuous up to $\partial B_R(0)$.
\end{remark}

\subsection*{Acknowledgment} Nirjan Biswas acknowledges the support of National Board for Higher Mathematics Postdoctoral Fellowship (0204/16(9)/2024/RD-II/6761). Abhrojyoti Sen is supported by the Alexander von Humboldt Foundation, Germany. He thanks Tobias Weth for several helpful discussions regarding the counterexample. In addition, he thanks Alberto Salda\~na whom he met at the conference funded by Ettore Majorana Foundation in Erice, Italy, and had a discussion about fractional logarithmic operators. 

\medskip
\noindent\textbf{Data availability.} This manuscript does not have associated data.

\bibliographystyle{abbrv}
\bibliography{ref}
\end{document}